\newcommand{\bm}[1]{\mbox{\boldmath{$#1$}}}
\def\x{{\bf   x}}
\def\u{{\bf   u}}
\def\q{{\bf   q}}
\def\I{{\bf   I}}
\def\n{{\bf   n}}
\def\({\left(}
\def\[{\left[}
\def\){\right)}
\def\]{\right]}
\def\div{\nabla\cdot }
\def\grad{\nabla }
\newtheorem{lem}{Lemma}
\newtheorem{thm}{Theorem}
\numberwithin{equation}{section}
\numberwithin{thm}{section}
\numberwithin{lem}{section}
\begin{document}
\title{Thermodynamically consistent   simulation of  nonisothermal  diffuse-interface two-phase flow  with  Peng-Robinson equation of state\thanks{This work is  supported by   National Natural Science Foundation of China (No.11301163),  and KAUST research fund to the
Computational Transport Phenomena Laboratory at KAUST.}}

\author{Jisheng Kou\thanks{School of Mathematics and Statistics, Hubei Engineering  University, Xiaogan 432000, Hubei, China. } \and Shuyu Sun\thanks{Corresponding author. Computational Transport Phenomena Laboratory, Division of Physical Science and Engineering,
King Abdullah University of Science and
Technology, Thuwal 23955-6900, Kingdom of Saudi Arabia.   Email: {\tt shuyu.sun@kaust.edu.sa}.}}

 \maketitle

\begin{abstract}
In this paper, we consider a diffuse-interface gas-liquid two-phase flow model with  inhomogeneous  temperatures, in which we employ the Peng-Robinson equation of state and the temperature-dependent influence parameter instead of the van der Waals equation of state and the constant influence parameter used in the existing models.  As a result, our model can characterize accurately the physical behaviors of numerous realistic gas-liquid fluids, especially hydrocarbons.  Furthermore,  we prove a relation associating     the pressure gradient with the gradients of temperature and  chemical potential, and thereby derive   a new formulation of the  momentum balance equation, which shows that  gradients of the chemical potential and temperature   become the primary driving force of the fluid motion.  It is rigorously proved that the new  formulations of the model obey the first and second laws of thermodynamics. 
To design efficient  numerical methods,  we prove that Helmholtz free energy density   is a concave function with respect to the temperature under certain physical conditions.  Based on the proposed modeling  formulations and the convex-concave  splitting of Helmholtz free energy density,  we propose a novel thermodynamically stable  numerical scheme.  We rigorously prove that  the proposed method  satisfies the first and second laws of thermodynamics. Finally, numerical tests are carried out to verify the effectiveness of the proposed simulation method.
\end{abstract}
\begin{keywords}
 Diffuse-interface model; Nonisothermal flow; Gas-liquid flow; Thermodynamical consistency; Peng-Robinson equation of state; Convex-concave  splitting.
\end{keywords}
\begin{AMS}
 65N12; 76T10; 49S05
 \end{AMS}

\section{Introduction}
 Modeling and simulation of gas-liquid two-phase   flow has a wide range of applications in industrial  and scientific problems.  In the oil reservoir,   hydrocarbon is usually split into gas and liquid phases due to the effect of  temperature, pressure and gravity.  In the thermal enhanced oil recovery \cite{chen2006multiphase}, heat is introduced intentionally  to reduce the oil viscosity  or vaporize part of the oil for the purpose of decreasing the mobility  such that oil  flows more freely through the reservoir.  The heated oil may also vaporize and then form better oil once it  condenses. In  the natural world, gas-liquid two-phase   flow is also one of the commonest  phenomena, such as boiling, evaporation, and  condensation  \cite{liu2015liquid}. 
 In this paper, we mainly focus on the diffuse-interface model of  the gas-liquid two-phase   flow at the pore scale, but which has indeed general formulations and can be applied for other applications.

In order  to describe a gas-liquid interface,  van der Waals introduced a gradient term in the Helmholtz free energy density, see \cite{Onuki2007PRE} and the references therein.   Korteweg developed the so-called Korteweg stress formulation induced by composition gradients, see \cite{liu2015liquid,Onuki2007PRE} and the references therein. From then on, the diffuse-interface models for  two-phase   fluid flow  have been extensively developed in the literature,  \cite{CahnHilliard1958,Abels2012TwoPhaseModel,Bao2012FEM} for instance.

In the traditional theories of phase transitions,  the temperature  is usually assumed to be constant. However, there exist many situations in which phase transitions are strongly influenced by an inhomogeneous temperature field, such as boiling, evaporation,   condensation and thermal enhanced oil recovery. 
To deal with such problems,  a diffuse-interface model accounting for variable temperatures has been developed in \cite{Onuki2005PRL,Onuki2007PRE} based on thermodynamical relations, and recently, \cite{liu2015liquid} proposed a  continuum mechanics modeling framework for liquid-vapor  flows using the thermodynamical laws.   Such models have been applied and extended to investigate the fluid problems with inhomogeneous temperatures \cite{Bueno2016liquid,Qian2016heatflow} for instance. In such models,  the Helmholtz free energy density usually  consists of two contributions: one results from the bulk phase of a fluid, which can be formulated by  van der Waals  equation of state, and the other is the density gradient contribution on the two-phase interfaces.  Although the van der Waals  equation of state is popularly used in physics,  the Peng-Robinson equation of state \cite{Peng1976EOS} has more accuracy for numerous realistic gas-liquid fluids including N$_2$, CO$_2$,  and hydrocarbons; as a result, it has been extensively employed in petroleum and chemical industries.  In recent years, modeling and simulation of two-phase flow based on the Peng-Robinson equation of state   have become an attractive and challenging research topic   in the reservoir  and chemical engineering   
\cite{kousun2015CMA,qiaosun2014,kousun2015SISC,kousun2015CHE,kouandsun2016multiscale,kousun2016Flash,fan2017componentwise,mikyvska2015General,smejkal2017phase}. 
 In this paper, we will study a nonisothermal diffuse-interface model combining with the Peng-Robinson equation of state.   In practices,  the existing models   usually employ a constant influence parameter in the density gradient contribution of Helmholtz free energy density, even though   \cite{Onuki2007PRE} assumed that such  parameter can depend on the density.  However, this influence parameter   is generally  viewed to highly depend on the temperature \cite{miqueu2004modelling}. Here, we will adapt a realistic formulation of the influence parameter, which is a function of temperature being consistent with the Peng-Robinson equation of state.

The  models of \cite{Onuki2005PRL,Onuki2007PRE,liu2015liquid} use a  thermodynamic pressure, which is a function of the molar density and temperature.  However, the pressure has a complicate formulation, which causes  inconvenience in theoretical analysis and construction of numerical methods.  In this paper,  we will investigate  a relation between the  gradients of pressure, temperature and chemical potential, and from this, we can simplify  the modeling  equations, which  allow us  to conveniently prove the satisfaction of thermodynamical laws and to design  efficient numerical schemes.

 For numerical simulation of diffuse-interface models,  it
 is demanded that numerical schemes shall satisfy the laws of thermodynamics due to the physical processes obeying such laws.   More precisely speaking,  for the  motion of a fluid with inhomogeneous temperatures, the first law of thermodynamics   (i.e. the energy balance law) is a basic physical  principle, and once it is satisfied, we may obtain reliable and accurate results from   numerical simulations.  The second law of thermodynamics describes the entropy production of the realistic irreversible  processes. As shown in \cite{kou2017compositional}, for a system under a fixed temperature,  one  can derive a formulation of  entropy by the first law  of thermodynamics, and then  from the second law of thermodynamics, one can further derive the total (free) energy dissipation law, which is admitted in the phase-field model \cite{shen2015SIAM,shen2016JCP,Bao2012FEM}.  Thus,  a main challenge  in numerical simulation is to design efficient numerical schemes that still satisfy the  laws of thermodynamics.  However, there are too few such methods available in the literature due to the short development history and
 more complications of these problems.  A notable progress is that  a provably entropy-stable  numerical scheme was designed and analyzed in  \cite{liu2015liquid}, which   is based fundamentally on the concept of functional entropy variables.
 
 It is different from the numerical schemes developed in \cite{liu2015liquid} that our proposed numerical schemes will be designed using the convex-concave splitting of Helmholtz free energy density.  For phase-field models, there are a lot of efforts on the developments of energy-dissipated schemes in the literature, \cite{shen2015SIAM,shen2016JCP} for instance, in which the convex-concave splitting of free energy functions is  a key and efficient technique. For the Peng-Robinson equation of state,  \cite{qiaosun2014} analyzed the convex-concave splitting of the Helmholtz free energy density with respect to molar density, but its convex-concave property with respect to the temperature (which is a key point for the problem considered in this work) is not explored yet.  
 
Another challenge in the considered modeling equations  is the strongly nonlinear, tightly mutual coupling relationship between molar density, temperature and velocity.  The resulted discrete equations  in \cite{liu2015liquid}  is still strongly nonlinear and fully coupled.  The approach of constructing an auxiliary velocity can be used to reduce the tight coupling relation between the phase function and velocity in phase filed simulation \cite{shen2015SIAM}.   In this paper,  we extend  this approach  to the considered problem, and we define   an  auxiliary velocity, which depends on molar density and temperature. As a result,  the nonlinear coupling relation in the proposed numerical scheme is alleviated to a great extent.  We further propose a decoupled, linearized iterative method for solving the discrete equations, which satisfies the discrete first law of thermodynamics.

The key contributions of our work are listed as below:

(1)  The Peng-Robinson equation of state  is employed to replace the van der Waals equation of state in the existing nonisothermal diffuse-interface two-phase flow models.  Moreover, we use a realistic formulation for the influence parameter in the gradient contribution of Helmholtz free energy density, which is a function of temperature instead of taking a constant as in the existing models.  As a result, this modified model can characterize accurately the physical behaviors of numerous realistic gas-liquid fluids including N$_2$, CO$_2$ and hydrocarbons etc. 

(2) A relation associating     the pressure gradient with the gradients of temperature and  chemical potential is proved,  and from this, we propose   a new formulation of the  momentum balance equation, which demonstrates that  chemical potential and temperature gradients  become the primary driving force of the fluid motion. The energy balance equation is also simplified.  With the new  formulations, it is convenient to  prove that the model obeys the first and second laws of thermodynamics. 

(3) We analyze the convex-concave splitting of  Helmholtz free energy density; in particular, we prove that its bulk contribution   is a concave function with respect to the temperature, and show that its gradient contribution is concave with respect to the temperature under certain conditions.

(4)  Based on the proposed modeling formulations,     combining the convex-concave  splitting of Helmholtz free energy density,  we propose a novel thermodynamically consistent   numerical scheme, in which   an auxiliary velocity is introduced to treat the coupling relations between molar density,  velocity and temperature.  We prove that  the proposed method rigorously satisfies the first and second laws of thermodynamics. 

Here, we note that  thermodynamical consistency of a model or a numerical method means that such model or such method obeys the first and second laws of thermodynamics.  Thermodynamical consistency is also called as thermodynamical stability.

The body  of this paper is organized as   follows.  In Section 2, we will  introduce the  thermodynamic formulations derived from Peng-Robinson equation of state,  the temperature-dependent influence parameter, and modeling equations of two-phase diffuse-interface flow with a variable temperature field.
 In Section 3,  the modeling equations are simplified with the help of a relation between the gradients of pressure, temperature and chemical potential;  subsequently, it is  proved that the simplified model obeys the laws of thermodynamics.  In Section 4, we propose  a thermodynamically consistent   numerical method based on an auxiliary velocity and the  convex-concave  splitting of Helmholtz free energy density, and we also prove that the proposed scheme satisfies the discrete laws of thermodynamics.
  In   Section 5,  numerical tests are carried out  to verify  effectiveness of the proposed method.   Finally,   some concluding remarks are provided in Section 6.

\section{Mathematical model}
In this section, we first describe the expressions of the thermodynamical variables and    temperature-dependent influence parameter, and subsequently we   formulate  the modeling equations of a diffuse-interface two-phase flow model with a variable temperature field.

\subsection{Formulations of thermodynamical variables}

We consider a pure substance fluid, and  let $n$ to denote the molar density of the substance.  We now present  the formulations of Helmholtz free energy density, entropy and internal energy, which are derived from Peng-Robinson equation of state \cite{Peng1976EOS,smejkal2017phase}. Let $T$ be the absolute  temperature. We denote  by $T_{c}$ and $P_{c}$  the   critical temperature and critical pressure, respectively, and let the reduced temperature be  $T_{r}=T/T_{c}$. 
Let $a$ and $b$ be the energy parameter and  the covolume, respectively, which are calculated as
\begin{eqnarray*}
   a(T)= 0.45724\frac{R^2T_{c}^2}{P_{c}}\[1+m(1-\sqrt{T_{r}})\]^2,~~~~b= 0.07780\frac{RT_{c}}{P_{c}},
\end{eqnarray*}
where $R$ is the ideal gas constant.
 The coefficient $m$ is calculated  by the following formulas
\begin{eqnarray*}
 m=0.37464 + 1.54226\omega-  0.26992\omega^2 ,~~\omega\leq0.49,
\end{eqnarray*}
\begin{eqnarray*}
 m=0.379642+1.485030\omega-0.164423\omega^2 +0.016666 \omega^3,~~\omega>0.49,
\end{eqnarray*}
where $\omega$ is the acentric factor.

 The   correlation coefficients $\alpha_i$  estimate the molar heat capacity of ideal gas at the constant pressure   as \cite{smejkal2017phase}
\begin{eqnarray}\label{eqHeatCapacity01}
    \psi_p(T)=\sum_{i=0}^3\alpha_iT^i.
\end{eqnarray}

The bulk Helmholtz free energy density, denoted by $f_{b}$,  is calculated as a sum of three contributions
\begin{eqnarray*}\label{eqHelmholtzEnergy_a0_01}
    f_b(n,T)&=& f_b^{\textnormal{ideal}}(n,T) + f_b^{\textnormal{repulsion}}(n,T)+f_b^{\textnormal{attraction}}(n,T),
\end{eqnarray*}
where
\begin{eqnarray*}\label{eqHelmholtzEnergy_a0_01}
    f_b^{\textnormal{ideal}}(n,T)&=& n\vartheta_0 +n\sum_{i=0}^3\alpha_i\frac{T^{i+1}-T_0^{i+1}}{i+1}-nR(T-T_0)\nonumber\\
    &&-nRT\ln\(\frac{P_0}{nRT}\)-nT\int_{T_0}^T\frac{\psi_p(\xi)}{\xi}d\xi,
\end{eqnarray*}
\begin{eqnarray*}\label{eqHelmholtzEnergy_a0_02}
    f_b^{\textnormal{repulsion}}(n,T)=-nRT\ln\(1-bn\),
\end{eqnarray*}
\begin{eqnarray*}\label{eqHelmholtzEnergy_a0_03}
    f_b^{\textnormal{attraction}}(n,T)= \frac{a(T)n}{2\sqrt{2}b}\ln\(\frac{1+(1-\sqrt{2})b n}{1+(1+\sqrt{2})b n}\),
\end{eqnarray*}
where  $T_0=  298.15 $K, $ P_0 = 1 $bar, and $\vartheta_0=-2478.95687512 $ J$/$mol. We note that the ideal contribution $f_b^{\textnormal{ideal}}$ is indeed enriched by the heat capacity term.

The bulk internal energy, denoted by $\vartheta_b$,  is formulated as \cite{smejkal2017phase}
 \begin{eqnarray*}\label{eqinternalenergy}
    \vartheta_b(n,T)&=&  n\vartheta_0 +n\sum_{i=0}^3\alpha_i\frac{T^{i+1}-T_0^{i+1}}{i+1}-nR(T-T_0)\nonumber\\
    &&+\frac{n\(a(T)-Ta'(T)\)}{2\sqrt{2}b}\ln\(\frac{1+(1-\sqrt{2})b n}{1+(1+\sqrt{2})b n}\),
\end{eqnarray*}
where $a'(T)$ denotes the  derivative with respect to $T$.
We  denote by $s_b$ the bulk entropy and  express it as \cite{smejkal2017phase}
 \begin{eqnarray*}\label{eqinternalenergy}
    s_b(n,T)&=&  nR\ln\(1-bn\)+nR\ln\(\frac{P_0}{nRT}\)+n\int_{T_0}^T\frac{\psi_p(\xi)}{\xi}d\xi \nonumber\\
    &&-\frac{na'(T)}{2\sqrt{2}b}\ln\(\frac{1+(1-\sqrt{2})b n}{1+(1+\sqrt{2})b n}\).
\end{eqnarray*}

The  influence parameter  generally    relies  on the temperature but  independent of the   molar density.  We denote the  influence parameters by $c$, which is  given by \cite{miqueu2004modelling}
\begin{eqnarray}\label{eqDefinfluenceparameter}
 c(T)=a(T)b^{2/3}\[\beta_1(1-T_{r})+\beta_2\],
\end{eqnarray}
where  $a$ and $b$ are the energy parameter and  the covolume respectively and the coefficients $\beta_1$ and $\beta_2$ are calculated as
\begin{eqnarray*}
 \beta_1=-\frac{10^{-16}}{1.2326+1.3757\omega},~~~~\beta_2=\frac{10^{-16}}{0.9051+1.5410\omega}.
\end{eqnarray*}

We now express the density gradient contribution to Helmholtz free energy density and denote it by $f_\grad$:
\begin{eqnarray}\label{eqDefHEP01}
    f_\grad=\frac{1}{2}c\grad n\cdot\grad n.
\end{eqnarray}
 The general   Helmholtz free energy density (denoted by $f$)  is  a sum of   two contributions:
\begin{eqnarray}\label{eqDefHEP02}
    f=f_b+f_\grad.
\end{eqnarray}
  By thermodynamical relations \cite{Groot2015NET,firoozabadi1999thermodynamics}, the entropy density (denoted by $s$) and chemical potential (denoted by $\mu$) can be expressed as
 \begin{eqnarray}\label{eqHelmholtzDensityDeri}
s=-\(\frac{\delta  f(n,T)}{\delta T}\)_n,~~~~\mu=\(\frac{\delta  f(n,T)}{\delta n}\)_T,
 \end{eqnarray}
 where  $\frac{\delta f}{\delta T}$ and $\frac{\delta f}{\delta n}$ represent the variational derivatives. 
We further define 
\begin{eqnarray}\label{eqDefDfDT}
    \gamma &=& \(\frac{\delta f(n,T)}{\delta T}\)_n=\gamma_b+\gamma_\grad,
\end{eqnarray}
where $\gamma_b=\(\frac{\partial  f_b(n,T)}{\partial T}\)_{n}$ and $$\gamma_\grad = \(\frac{\delta f_\grad}{\delta T}\)_n=\frac{1}{2} c'(T)|\nabla n|^2.$$
 We denote the entropy contribution of the bulk fluid by $s_b$ and the entropy gradient contribution  by $s_\grad$. The thermodynamical relation yields 
$$s=s_b+s_\grad=-\gamma_b-\gamma_\grad.$$
 Let the bulk chemical potential be $\mu_b=\(\frac{\partial  f_b(n,T)}{\partial n}\)_{T}$. The general form of  chemical potential is expressed  as
\begin{eqnarray}\label{eqDefChPtl}
    \mu=\(\frac{\delta  f(n,T)}{\delta n}\)_{T}=\mu_b+\mu_\grad,
    \end{eqnarray}
where $\mu_\grad$ is the gradient contribution of chemical potential
\begin{eqnarray}\label{eqDefChPtlGrad}
    \mu_\grad=\(\frac{\delta  f_\grad(n,T)}{\delta n}\)_{T}=-\div c\grad{n}.
\end{eqnarray}

\subsection{Model equations}
 
We now describe the modeling equations based on the models of \cite{Onuki2005PRL,Onuki2007PRE,liu2015liquid},  but  the original van der Waals equation of state is replaced by the Peng-Robinson equation of state.  Moreover,  the temperature-dependent    influence parameter given in \eqref{eqDefinfluenceparameter} is adopt  instead of  constant parameters.

We denote   the mass density by $\rho$ as $\rho=nM_w$, where $M_w$ is the molar weight.  The fluid velocity is denoted by $\u$. The  law of mass conservation states
\begin{eqnarray}\label{eqMassConserve1C01}
\frac{\partial n}{\partial t}+\div(n\u)=0,
\end{eqnarray}
which is also reformulated  by a mass form
\begin{eqnarray}\label{eqMassConserve1C02}
\frac{\partial\rho}{\partial t}+\div(\rho\u)=0.
\end{eqnarray}
  The momentum balance  equation is expressed  as
\begin{eqnarray}\label{eqMomentumConserve01}
&& \frac{\partial(\rho\u)}{\partial t}+\div\(\rho\u\otimes\u\)=-\div\bm\sigma,
\end{eqnarray}
where  $\bm\sigma$ is the total stress.
Utilizing the mass conservation equation, we can reformulate  \eqref{eqMomentumConserve01} as
\begin{eqnarray}\label{eqMomentumConserve02}
&& \rho\(\frac{\partial\u}{\partial t}+\(\u\cdot\grad\)\u\)=-\div\bm\sigma .
\end{eqnarray}
For the realistic viscous  flow, the total stress can be split into two parts:  reversible part (denoted by $\bm\sigma_{\textnormal{rev}}$) and irreversible part  (denoted by $\bm\sigma_{\textnormal{irrev}}$):
\begin{eqnarray}\label{eqTotalStress}
   \bm\sigma=\bm\sigma_{\textnormal{rev}}+\bm\sigma_{\textnormal{irrev}}.
\end{eqnarray}
The reversible stress has the form
\begin{eqnarray}\label{eqMulticomponentTotalStressA}
 \bm\sigma_{\textnormal{rev}}=  p \I+ c\(\nabla n\otimes\nabla n\),
\end{eqnarray}
where $p$ is the pressure and $\I$ is the second-order identity tensor. The   pressure  with density gradient contribution  can be expressed as
\begin{eqnarray}\label{eqDefGeneralPres}
    p &=& n \mu- f\nonumber\\
   &=&n\(\mu_b - \div{c\grad{n}}\)-f_b(n)-\frac{1}{2} c\nabla n\cdot\nabla n\nonumber\\
   &=&p_b- n\div{c\grad{n}}-\frac{1}{2} c \nabla n\cdot\nabla n,
\end{eqnarray}
where $p_b$ is the bulk pressure as $$p_b=n\mu_b-f_b.$$

Let $\eta$ and and $\xi$ represent the shear viscosity and volumetric  viscosity respectively.   We assume  $\xi>\frac{2} {3}\eta$ as usual. Newtonian fluid theory suggests
\begin{eqnarray}\label{eqTotalStressB}
   \bm\sigma_{\textnormal{irrev}}=-\eta D(\u)-\(\lambda\div\u\) \I,
\end{eqnarray}
where $D(\u)=\nabla\u+\nabla\u^T$ and $\lambda=\xi-\frac{2}{3}\eta$. 

We denote by $\vartheta$ the internal energy density per unit volume, and  the total energy density includes the internal energy and kinetic energy as $e_T  =\vartheta+\frac{1}{2}\rho|\u|^2$.  The total energy balance equation is stated as
\begin{eqnarray}\label{eqTotalEnergyBalance}
  \frac{\partial e_T}{\partial t}+\div\(e_T\u+\bm\sigma\cdot\u\)=\div\(c(\grad n\otimes\grad n)\cdot\u-\(\div\(\u n\)\)c\grad{n}\)-\div\q,
\end{eqnarray}
where $\q$ is the heat transfer flux as
\begin{eqnarray}\label{eqthermalconductivityGrad02}
  \q= -\Theta \grad T.
 \end{eqnarray} 
 Here, $\Theta$ denotes the heat diffusion coefficient, which depends  generally  on the molar density and temperature. 
   

We now derive the equation of internal energy density from \eqref{eqTotalEnergyBalance}. Using the momentum balance  equation, we obtain the transport of kinetic energy density as 
\begin{eqnarray}\label{eqtransportkineticenergydensity}
 &&\frac{1}{2}\frac{\partial \(\rho|\u|^2\)}{\partial t}+\frac{1}{2}\div\(\u \(\rho|\u|^2\)\)\nonumber\\
 &&~~= \rho \u\cdot\frac{\partial \u}{\partial t}+\frac{1}{2}\u\cdot\u\frac{\partial\rho}{\partial t} +\frac{1}{2} \big(\(\u\cdot\u\)\div\(\rho\u\)+ 2\rho\u\cdot\(\u\cdot\grad\u\)\big)\nonumber\\
   &&~~= \rho\u\cdot\(\frac{\partial \u}{\partial t}+\u\cdot\grad\u\)+\frac{1}{2} \u\cdot\u\(\frac{\partial\rho}{\partial t}+\div\(\rho\u\)\)\nonumber\\
   &&~~= -\u\cdot\div\bm\sigma\nonumber\\
&&~~=-\div\(\bm\sigma\cdot\u\)+\bm\sigma:\grad\u.
\end{eqnarray}
Substituting \eqref{eqtransportkineticenergydensity} into \eqref{eqTotalEnergyBalance} yields  the balance equation of  internal energy density
 \begin{eqnarray}\label{eqInternalEnergy01}
   \frac{\partial \vartheta}{\partial t}+\div(\vartheta\u)=\div\big(c(\grad n\otimes\grad n)\cdot\u-\(\div\(\u n\)\)c\grad{n}\big)-\div\q- \bm\sigma:\grad\u.
 \end{eqnarray}

We denote the bulk internal energy density  by $\vartheta_b$. Then the thermodynamical relation gives 
 \begin{eqnarray}\label{eqBulkInternalenergydensity}
\vartheta_b=f_b+s_bT.
 \end{eqnarray}
 Furthermore, we denote by $\vartheta_\grad$ the gradient contribution of internal energy density,  and  from the thermodynamical relation and formulations of $f_\grad$ and $s_\grad$, we obtain
 \begin{eqnarray}\label{eqGradInternalenergydensity}
\vartheta_\grad=f_\grad+s_\grad T=\frac{1}{2}\(c(T)-Tc'(T)\)\grad n\cdot\grad n.
 \end{eqnarray}

 \section{New formulations and thermodynamical consistency}
 
 As shown in the previous section,  the reversible stress, consisting of the pressure and surface tension terms, has a complicate form. It is  inconvenient for theoretical analysis and construction of numerical methods.   In this section,  we will  prove a relation between the gradients of pressure, temperature and chemical potential, which allows us to simplify  the   momentum balance  equation and energy balance equation.  By the simplified equations, it is convenient to prove that the model obeys the first and second laws of thermodynamics. 

\subsection{New formulations}
The following theorem provides a relation between the gradients of pressure, temperature and chemical potential.
 \begin{thm}\label{lemHelmholtzTransportEqnPureGrad}
The gradients of pressure, temperature and chemical potential have the following relation
\begin{eqnarray}\label{eqPresChptlGradRelation}
 n\grad \mu-  \gamma\grad T
 =\grad p+\div c(\grad n\otimes\grad n).
\end{eqnarray}

\end{thm}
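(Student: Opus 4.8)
The plan is to separate \eqref{eqPresChptlGradRelation} into a bulk contribution and a gradient contribution, dispatch the bulk part by a direct chain-rule computation, and reduce the gradient part to a pointwise vector identity. Throughout I will use that $\mu=\mu_b+\mu_\grad$, $\gamma=\gamma_b+\gamma_\grad$, and the generalized pressure $p=p_b-n\div c\grad n-\frac{1}{2} c\grad n\cdot\grad n$ from \eqref{eqDefGeneralPres}, so that it suffices to track how the bulk terms and the $c$-dependent gradient terms transform under $\grad$.

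First I would prove the classical Gibbs--Duhem-type relation for the bulk quantities,
$$n\grad\mu_b-\gamma_b\grad T=\grad p_b.$$
Since $p_b=n\mu_b-f_b$ with $\mu_b=\(\frac{\partial f_b}{\partial n}\)_T$ and $\gamma_b=\(\frac{\partial f_b}{\partial T}\)_n$, differentiating $p_b$ by the chain rule in the two variables $n$ and $T$ gives $\(\frac{\partial p_b}{\partial n}\)_T=n\(\frac{\partial\mu_b}{\partial n}\)_T$, because $\(\frac{\partial f_b}{\partial n}\)_T$ cancels $\mu_b$, together with $\(\frac{\partial p_b}{\partial T}\)_n=n\(\frac{\partial\mu_b}{\partial T}\)_n-\gamma_b$. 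Reassembling $\grad p_b=\(\frac{\partial p_b}{\partial n}\)_T\grad n+\(\frac{\partial p_b}{\partial T}\)_n\grad T$ and comparing with $n\grad\mu_b$ expanded by the same chain rule yields the bulk relation at once.

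Subtracting this bulk identity from the target \eqref{eqPresChptlGradRelation}, and using $\grad(p-p_b)=-\grad\(n\div c\grad n\)-\grad\(\frac{1}{2} c|\grad n|^2\)$, it remains to prove the purely gradient statement
$$n\grad\mu_\grad-\gamma_\grad\grad T=-\grad\(n\div c\grad n\)-\grad\(\frac{1}{2} c|\grad n|^2\)+\div c\(\grad n\otimes\grad n\).$$
Here I would insert $\mu_\grad=-\div c\grad n$ and $\gamma_\grad=\frac{1}{2} c'(T)|\grad n|^2$, and crucially use that $c$ depends on $T$ alone, so that $\grad c=c'(T)\grad T$ and hence $\gamma_\grad\grad T=\frac{1}{2}|\grad n|^2\grad c$. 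The term $n\grad\mu_\grad=-n\grad\(\div c\grad n\)$ on the left matches the corresponding piece produced by the product rule in $-\grad\(n\div c\grad n\)$ on the right, so these cancel.

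What is left is a pointwise identity, which I would verify in index notation by expanding $\div c\(\grad n\otimes\grad n\)$, $\grad\(n\div c\grad n\)$, and $\grad\(\frac{1}{2} c|\grad n|^2\)$ with the product rule. The resulting terms fall into three groups: terms carrying $\grad c$, terms of the form $c$ times a second derivative of $n$ contracted with $\grad n$, and terms of the form $c$ times $\Delta n$ times $\grad n$. I expect the two Laplacian terms to cancel, the mixed second-derivative terms to cancel upon relabelling dummy indices and using $\partial_i\partial_j n=\partial_j\partial_i n$, and the remaining $\grad c$ terms to collapse to exactly $-\frac{1}{2}|\grad n|^2\grad c$, which is precisely the left-hand side after the substitutions above. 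The main obstacle is purely organizational: correctly enumerating the several $\grad c$ and second-derivative contributions hidden inside the tensor divergence $\div c\(\grad n\otimes\grad n\)$ and matching them against the scalar gradients; once this bookkeeping is carried out, no further analytic ingredient is required.
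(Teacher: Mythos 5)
Your proposal is correct and follows essentially the same route as the paper: the bulk Gibbs--Duhem identity $n\grad\mu_b-\gamma_b\grad T=\grad p_b$ obtained from $p_b=n\mu_b-f_b$ by the chain rule, followed by product-rule manipulation of the $c$-dependent terms using $\grad c=c'(T)\grad T$ so that $\frac{1}{2}|\grad n|^2\grad c=\gamma_\grad\grad T$. Your only departure is organizational --- you subtract the bulk identity first and then verify in index notation the tensor identity $\(\div\(c\grad n\)\)\grad n+\frac{1}{2}c\grad|\grad n|^2=\div c\(\grad n\otimes\grad n\)$, which the paper simply cites as \eqref{eqPresChptlGradRelationProof03} inside a single computation of $n\grad\mu-\grad p$ --- and the cancellations you anticipate (Laplacian terms, mixed second derivatives, and the $\(\grad c\cdot\grad n\)\grad n$ terms) do occur exactly as you describe.
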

\begin{proof}
First, we have the identity 
\begin{eqnarray}\label{eqPresChptlGradRelationProof01}
 n\grad\mu_b=\grad p_b+\gamma_b\grad T,
\end{eqnarray}
which is deduced from the formulation of $p_b$ as $$\grad p_b=\grad \(n\mu_b-f_b\)=n\grad\mu_b+\mu_b\grad n-\mu_b\grad n-\gamma_b\grad T=n\grad\mu_b-\gamma_b\grad T.$$
From   formulations of  the pressure and chemical potential, taking into account \eqref{eqPresChptlGradRelationProof01}, we  derive  the relation  \eqref{eqPresChptlGradRelation}  as
\begin{eqnarray}\label{eqPresChptlGradRelationProof02}
 n\grad \mu-\grad p 
 &=&n\grad\(\mu_b-\div\(c\grad{n}\)\)-\grad\(p_b- n\div\(c\grad{n}\)-\frac{1}{2} c \nabla n\cdot\nabla n\)\nonumber\\
 &=&n\grad\mu_b-\grad p_b-n\grad\(\div\(c\grad{n}\)\)+\grad\( n\div\(c\grad{n}\)\)+\frac{1}{2}\grad\( c \nabla n\cdot\nabla n\)\nonumber\\
 &=&\gamma_b\grad T+\(\div\(c\grad{n}\)\)\grad n+\frac{1}{2}\grad\( c \nabla n\cdot\nabla n\)\nonumber\\
 &=&\gamma_b\grad T+\(\div\(c\grad{n}\)\)\grad n+\frac{1}{2} |\nabla n|^2\grad{c}+\frac{1}{2}c\grad|\nabla n|^2\nonumber\\
 &=&\gamma_b\grad T+\gamma_\grad\grad{T}+\div c(\grad n\otimes\grad n)\nonumber\\
 &=& \gamma\grad T+\div c(\grad n\otimes\grad n),
\end{eqnarray}
where we have also used the identity   
\begin{eqnarray}\label{eqPresChptlGradRelationProof03}
 \(\div\(c\grad{n}\)\)\grad n+\frac{1}{2}c\grad|\grad n|^2=\div c(\grad n\otimes\grad n).
 \end{eqnarray}
This ends the proof.
\end{proof}

Applying \eqref{eqPresChptlGradRelation} to \eqref{eqMomentumConserve02}, we can obtain a new formulation of the momentum balance equation   
\begin{eqnarray}\label{eqMomentumConserve03}
 \rho\(\frac{\partial\u}{\partial t}+\(\u\cdot\grad\)\u\)= -n\grad  \mu+\gamma\grad T + \div\eta D\(\u\)+\grad\(\lambda\div\u\)  ,
\end{eqnarray}
which demonstrates that the gradients of chemical potential  and temperature are the primal driving force.
 
 We now turn to simplify  the energy balance equation. Applying \eqref{eqPresChptlGradRelation} to \eqref{eqInternalEnergy01}, we  derive
 \begin{eqnarray}\label{eqInternalEnergy02}
   \frac{\partial \vartheta}{\partial t}+\div(\vartheta\u)&=&-\div\big( \q-c(\grad n\otimes\grad n)\cdot\u+\(\div\(\u n\)\)c\grad{n}\big)\nonumber\\
 &&- p \div\u-\(c\grad n\otimes\grad n\):\grad\u - \bm\sigma_{\textnormal{irrev}}:\grad\u\nonumber\\
 &=&-\div\( \q+\u p+\(\div\(\u n\)\)c\grad{n}\)\nonumber\\
 &&+\u\cdot\grad{p}+\u\cdot\div\(c\grad n\otimes\grad n\) - \bm\sigma_{\textnormal{irrev}}:\grad\u\nonumber\\
 &=&-\div\( \q+\u p+\(\div\(\u n\)\)c\grad{n}\)\nonumber\\
 &&+\u\cdot\(n\grad \mu- \gamma\grad T\) - \bm\sigma_{\textnormal{irrev}}:\grad\u\nonumber\\
 &=&-\div\( \q+\(\div\(\u n\)\)c\grad{n}\)+\div\(\u f\)\nonumber\\
 &&- \mu\div(\u n)- \u\cdot\gamma\grad T - \bm\sigma_{\textnormal{irrev}}:\grad\u.
 \end{eqnarray}
 Moving the term $\div\(\u f\)$ into the left-hand side and taking into account $\vartheta=f+Ts$, we obtain  the balance equation of  internal energy density
  \begin{eqnarray}\label{eqInternalEnergy03}
   \frac{\partial \vartheta}{\partial t}+\div(sT\u)&=&-\div\( \q+\(\div\(\u n\)\)c\grad{n}\)\nonumber\\
 &&- \mu\div(\u n) - \u\cdot\gamma\grad T - \bm\sigma_{\textnormal{irrev}}:\grad\u.
 \end{eqnarray}

 We consider the fluids in a closed domain $\Omega$   with a fixed volume.  The natural boundary conditions can be formulated as
\begin{eqnarray}\label{eqEntropySecondb}
 \u =0,~~~~\grad{n}\cdot\bm\nu_{\partial\Omega} =0.
\end{eqnarray}
where  $\bm\nu_{\partial\Omega}$ denotes a normal unit outward vector  to the boundary $\partial \Omega$. For the temperature, we partition the domain boundary $\partial\Omega$   into two non-overlapping subdivisions as
 $\partial\Omega=\Gamma_n\cup\Gamma_d,$
  and impose the boundary conditions
\begin{eqnarray}\label{eqEntropySecondb}
\q\cdot\bm\nu_{\partial\Omega} =\q_{B}~~\textnormal{on}~~\Gamma_n,~~~~\textnormal{and} ~~~~T=T_B~~\textnormal{on}~~\Gamma_d, 
\end{eqnarray}
where $\q_{B}$ is the given heat transfer flux across the boundary and $T_B$ is the given temperature distribution  on the boundary. It is noted that either $\Gamma_n$ or $\Gamma_d$ may vanish or be redivided in a specific problem.  The initial conditions for molar density, temperature and velocity are also provided. 

In summary, the   system of simplified modeling equations is composed of mass balance equation \eqref{eqMassConserve1C01}, the momentum balance equation \eqref{eqMomentumConserve03}, and the balance equation of  internal energy density \eqref{eqInternalEnergy03}, as well as the initial and boundary conditions. As we will see in the next subsection,  it is convenient to verify that this system satisfies the  laws of thermodynamics. 

 \subsection{Thermodynamical consistency}
 We first prove that the simplified model satisfies the first law of thermodynamics. We  define the kinetic energy and internal energy over the domain as
 \begin{eqnarray}\label{eqEnergy01}
 \mathcal{H}&=&\frac{1}{2}\int_\Omega\rho|\u|^2d\x,~~~~~\mathcal{U}=\int_\Omega\vartheta d\x.
\end{eqnarray}
Furthermore, we define the total energy $\mathcal{E}$ over the domain 
$$\mathcal{E}=\int_\Omega e_Td\x=\mathcal{H}+\mathcal{U}.$$ 

\begin{thm}\label{lemHelmholtzTransportEqnPureGrad}
The system of equations \eqref{eqMassConserve1C01}, \eqref{eqMomentumConserve03} and  \eqref{eqInternalEnergy03} satisfies the first law of thermodynamics as
\begin{eqnarray}\label{eqTotalEnergy}
   \frac{\partial \mathcal{E}}{\partial t} &=& -\int_{\partial\Omega}\q_{\partial\Omega}\cdot\bm\nu_{\partial\Omega} d\bm{s},
 \end{eqnarray}
 where $\q_{\partial\Omega}$ denotes the  heat transfer flux between the system and  its environment and $\bm\nu_{\partial\Omega}$ denotes a normal unit outward vector  to the boundary $\partial \Omega$.
\end{thm}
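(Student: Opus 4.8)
The plan is to differentiate the kinetic energy $\mathcal{H}$ and the internal energy $\mathcal{U}$ of \eqref{eqEnergy01} separately, then add the two contributions and watch the bulk integrals cancel in pairs, leaving only the boundary heat flux. The two new formulations \eqref{eqMomentumConserve03} and \eqref{eqInternalEnergy03} are precisely what make this cancellation transparent.

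First I would compute $\frac{d\mathcal{H}}{dt}$ by taking the $L^2(\Omega)$ inner product of the momentum balance \eqref{eqMomentumConserve03} with $\u$ and integrating over $\Omega$. Using the kinetic-energy identity \eqref{eqtransportkineticenergydensity} together with mass conservation, the left-hand side equals $\frac{d}{dt}\frac12\int_\Omega\rho|\u|^2\,d\x=\frac{d\mathcal{H}}{dt}$, since the convective flux $\frac12\div\(\u\rho|\u|^2\)$ integrates to a boundary term that vanishes because $\u=0$ on $\partial\Omega$. On the right-hand side the viscous terms are integrated by parts; as $\u$ vanishes on the boundary the boundary contribution drops and, using $\div\bm\sigma_{\textnormal{irrev}}=-\div\eta D\(\u\)-\grad\(\lambda\div\u\)$, they reduce to $\int_\Omega\bm\sigma_{\textnormal{irrev}}:\grad\u\,d\x$. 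This yields
\begin{eqnarray*}
\frac{d\mathcal{H}}{dt}=\int_\Omega\u\cdot\(-n\grad\mu+\gamma\grad T\)d\x+\int_\Omega\bm\sigma_{\textnormal{irrev}}:\grad\u\,d\x.
\end{eqnarray*}

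Next I would integrate the internal-energy balance \eqref{eqInternalEnergy03} over $\Omega$. The convective term $\div\(sT\u\)$ integrates to a boundary term that vanishes since $\u=0$ there; the flux term $\div\(\q+\(\div\(\u n\)\)c\grad n\)$ becomes a boundary integral whose second piece vanishes because $\grad n\cdot\bm\nu_{\partial\Omega}=0$ on $\partial\Omega$, leaving $\int_{\partial\Omega}\q\cdot\bm\nu_{\partial\Omega}\,d\bm{s}$; and the term $-\int_\Omega\mu\div\(\u n\)d\x$ is integrated by parts to $\int_\Omega n\u\cdot\grad\mu\,d\x$, its boundary contribution again vanishing through $\u=0$. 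Collecting these gives
\begin{eqnarray*}
\frac{d\mathcal{U}}{dt}=-\int_{\partial\Omega}\q\cdot\bm\nu_{\partial\Omega}\,d\bm{s}+\int_\Omega n\u\cdot\grad\mu\,d\x-\int_\Omega\u\cdot\gamma\grad T\,d\x-\int_\Omega\bm\sigma_{\textnormal{irrev}}:\grad\u\,d\x.
\end{eqnarray*}

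Finally I would add the two expressions. The chemical-potential terms $\mp\int_\Omega n\u\cdot\grad\mu\,d\x$, the temperature terms $\pm\int_\Omega\u\cdot\gamma\grad T\,d\x$, and the viscous dissipation terms $\pm\int_\Omega\bm\sigma_{\textnormal{irrev}}:\grad\u\,d\x$ cancel exactly in pairs, so that $\frac{d\mathcal{E}}{dt}=\frac{d\mathcal{H}}{dt}+\frac{d\mathcal{U}}{dt}=-\int_{\partial\Omega}\q\cdot\bm\nu_{\partial\Omega}\,d\bm{s}$, which is the asserted identity with $\q_{\partial\Omega}=\q|_{\partial\Omega}$. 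I expect the main obstacle to be engineering exactly these pairwise cancellations rather than any analytic difficulty: the driving-force terms $-n\grad\mu$ and $\gamma\grad T$ appearing in the kinetic-energy balance must reappear with opposite sign in the internal-energy balance (for the chemical potential this requires the single integration by parts on $\mu\div\(\u n\)$), and the irreversible dissipation $\bm\sigma_{\textnormal{irrev}}:\grad\u$ must likewise cancel. The only delicate bookkeeping is tracking signs and distinguishing which boundary integrals die through $\u=0$ and which through $\grad n\cdot\bm\nu_{\partial\Omega}=0$; beyond the regularity needed to justify the integrations by parts, no further hypotheses are required.
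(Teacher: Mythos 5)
Your proposal is correct and follows essentially the same route as the paper's proof: testing the momentum balance \eqref{eqMomentumConserve03} against $\u$ via the kinetic-energy identity \eqref{eqtransportkineticenergydensity}, integrating \eqref{eqInternalEnergy03} over $\Omega$, and cancelling the driving-force and viscous-dissipation terms in pairs. The only (immaterial) difference is that you integrate $\mu\div(\u n)$ by parts inside the internal-energy step, whereas the paper combines it with $n\u\cdot\grad\mu$ at the end via $\int_\Omega\div\(\mu\u n\)d\x=0$.
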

\begin{proof}
From \eqref{eqtransportkineticenergydensity}, we get
 \begin{eqnarray}\label{eqEnergyProof01}
 \frac{\partial\mathcal{H}}{\partial t}&=&\int_\Omega \rho\u\cdot\(\frac{\partial \u}{\partial t}+\u\cdot\grad\u\)d\x.
\end{eqnarray}
Substituting   \eqref{eqMomentumConserve03} into \eqref{eqEnergyProof01} yields 
\begin{eqnarray}\label{eqEnergyProof02}
 \frac{\partial\mathcal{H}}{\partial t}&=&\int_\Omega  \u\cdot\(-n\grad  \mu+\gamma\grad T + \div\eta D\(\u\)+\grad\lambda\div\u\)d\x\nonumber\\
&=&\int_\Omega  \u\cdot\(\gamma\grad T-n\grad  \mu\)d\x+\int_\Omega  \bm\sigma_{\textnormal{irrev}}:\grad\u d\x.
\end{eqnarray}
Integrating \eqref{eqInternalEnergy03} over $\Omega$ yields
\begin{eqnarray}\label{eqEnergyProof03}
   \frac{\partial \mathcal{U}}{\partial t} &=& -\int_{\partial\Omega}\q_{\partial\Omega}\cdot\bm\nu_{\partial\Omega} d\bm{s}- \int_\Omega\(\mu\div(\u n) + \u\cdot\gamma\grad T + \bm\sigma_{\textnormal{irrev}}:\grad\u\)d\x.
 \end{eqnarray}
Taking a summation of  \eqref{eqEnergyProof02} and \eqref{eqEnergyProof03},  and then taking into account  $$\int_\Omega\(\mu\div(\u n)+n\u\cdot\grad  \mu\)d\x=\int_\Omega\div\(\mu\u n\) d\x=0,$$
 we obtain \eqref{eqTotalEnergy}.
  \end{proof}
  
 We next prove that  the simplified model system satisfies the second law of thermodynamics.  For the notations, we use $\(\cdot,\cdot\)$ and $\|\cdot\|$ to represent the $L^2\(\Omega\)$, $\(L^2\(\Omega\)\)^d$ or $\(L^2\(\Omega\)\)^{d\times d}$ inner product and norm respectively.  We  define the entropy over the domain and denote it by  $\mathcal{S} $. Since $s=\frac{1}{T}\(\vartheta-f\)$,  we obtain 
 \begin{eqnarray}\label{eqTotalEntropy}
   \frac{\partial \mathcal{S} }{\partial t}&=&\(\frac{\partial s}{\partial t},1\)=\(\frac{\partial\(\vartheta-f\)}{\partial t},\frac{1}{T}\)-\(\frac{\partial T}{\partial t},\frac{s}{T}\).
 \end{eqnarray}
 
In order to estimate the entropy, we first need to derive a variation equation of Helmholtz free energy density. 
 \begin{lem}\label{lemTransportHelmholtzTransportEqnPureGrad}
The Helmholtz free energy density satisfies the following variation  equation 
\begin{eqnarray}\label{eqHelmholtzTransportEqnPureGrad}
 \frac{\partial  f}{\partial t}&=&\gamma\frac{\partial T}{\partial t}-\mu\div(n\u) -\div\(\(\div\(\u n\)\)c\grad{n}\).
\end{eqnarray}
\end{lem}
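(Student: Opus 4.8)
The plan is to differentiate the decomposition $f=f_b+f_\grad$ from \eqref{eqDefHEP02} in time and then eliminate $\frac{\partial n}{\partial t}$ using the mass balance law $\frac{\partial n}{\partial t}=-\div(n\u)$ from \eqref{eqMassConserve1C01}. The crucial structural observation is that $f_b$ is a function of $(n,T)$ whereas the gradient contribution $f_\grad=\frac12 c(T)\grad n\cdot\grad n$ depends on $(T,\grad n)$ but not on $n$ itself, so each piece must be differentiated according to its own dependencies.

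First I would compute the bulk part by the chain rule, writing $\frac{\partial f_b}{\partial t}=\mu_b\frac{\partial n}{\partial t}+\gamma_b\frac{\partial T}{\partial t}$, where $\mu_b=\(\frac{\partial f_b}{\partial n}\)_T$ and $\gamma_b=\(\frac{\partial f_b}{\partial T}\)_n$ are the bulk quantities introduced in \eqref{eqDefDfDT} and \eqref{eqDefChPtl}. For the gradient part I would differentiate $f_\grad$ directly, using that $\partial_t\grad n=\grad\partial_t n$, to obtain $\frac{\partial f_\grad}{\partial t}=\frac12 c'(T)|\grad n|^2\frac{\partial T}{\partial t}+c\grad n\cdot\grad\frac{\partial n}{\partial t}$, and I would recognize the first term as $\gamma_\grad\frac{\partial T}{\partial t}$ by the definition of $\gamma_\grad$ in \eqref{eqDefDfDT}.

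The key step is to recast the remaining term $c\grad n\cdot\grad\frac{\partial n}{\partial t}$ so that the gradient contribution of the chemical potential emerges. I would apply the product rule $c\grad n\cdot\grad\psi=\div\(\psi\,c\grad n\)-\psi\,\div\(c\grad n\)$ with $\psi=\frac{\partial n}{\partial t}$, so that the second piece becomes $\mu_\grad\frac{\partial n}{\partial t}$ via the definition $\mu_\grad=-\div\(c\grad n\)$ from \eqref{eqDefChPtlGrad}. Summing the bulk and gradient contributions then collapses $\mu_b+\mu_\grad=\mu$ and $\gamma_b+\gamma_\grad=\gamma$, giving $\frac{\partial f}{\partial t}=\mu\frac{\partial n}{\partial t}+\gamma\frac{\partial T}{\partial t}+\div\(\frac{\partial n}{\partial t}\,c\grad n\)$. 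Finally, substituting $\frac{\partial n}{\partial t}=-\div(n\u)$ turns the last two terms into $-\mu\div(n\u)$ and $-\div\(\(\div(\u n)\)c\grad n\)$, which is exactly \eqref{eqHelmholtzTransportEqnPureGrad}.

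The computation is essentially routine, and I do not anticipate a genuine obstacle. The one point requiring care is the treatment of $f_\grad$ as a functional of $\grad n$ rather than of $n$: its time derivative must be split into an explicit divergence flux plus the $\mu_\grad$ term through the product rule above, rather than appealing to the variational derivative directly. The only remaining bookkeeping is to verify that the divergence term carries the factor $\div(\u n)$ with the correct sign once the mass balance substitution is made.
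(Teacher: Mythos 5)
Your proposal is correct and follows essentially the same route as the paper's proof: differentiate $f_b$ and $f_\grad$ separately, identify $\gamma_\grad\frac{\partial T}{\partial t}$, and use the product rule $c\grad n\cdot\grad\psi=\div\(\psi\,c\grad n\)-\psi\,\div\(c\grad n\)$ together with $\mu_\grad=-\div\(c\grad n\)$ and the mass balance equation. The only difference is a trivial reordering — you substitute $\frac{\partial n}{\partial t}=-\div(n\u)$ at the end, whereas the paper substitutes it before applying the product rule (taking $\psi=\div(\u n)$ instead of $\psi=\frac{\partial n}{\partial t}$) — which yields the identical computation.
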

\begin{proof}
First, using the mass conservation equation, we obtain the variation  of the bulk Helmholtz free energy $f_b$ as
\begin{eqnarray}\label{eqHelmholtzTransportProof01}
 \frac{\partial f_b}{\partial t}
 &=&\mu_b\frac{\partial n}{\partial t}+\gamma_b\frac{\partial T}{\partial t}\nonumber\\
 &=&-\mu_b\div(n\u)+\gamma_b\frac{\partial T}{\partial t}.
\end{eqnarray}
Furthermore, we derive the variation  of the gradient contribution of Helmholtz free energy $f_\grad$ as
\begin{eqnarray}\label{eqHelmholtzTransportProof02}
 \frac{\partial  f_\grad}{\partial t} &=&\frac{1}{2}\frac{\partial\( c \nabla n\cdot\nabla n\)}{\partial{t}}\nonumber\\
 &=&\frac{1}{2}|\nabla n|^2\frac{\partial c }{\partial{t}} +\frac{1}{2}c\frac{\partial\(  \nabla n\cdot\nabla n\)}{\partial{t}}\nonumber\\
 &=&\gamma_\grad\frac{\partial T}{\partial t}+c\nabla n\cdot\nabla\frac{\partial n}{\partial{t}}\nonumber\\
 &=&\gamma_\grad\frac{\partial T}{\partial t} -c\nabla n\cdot\nabla\(\div\(\u{n}\)\)\nonumber\\
 &=&\gamma_\grad\frac{\partial T}{\partial t}-\div\(\(\div\(\u n\)\)c\grad{n}\)-\mu_\grad\div\(\u n\).
\end{eqnarray}
Thus, \eqref{eqHelmholtzTransportEqnPureGrad} is obtained summing \eqref{eqHelmholtzTransportProof01} and  \eqref{eqHelmholtzTransportProof02}.
\end{proof}

\begin{thm}\label{lemHelmholtzTransportEqnPureGrad}
The system of equations \eqref{eqMassConserve1C01}, \eqref{eqMomentumConserve03} and  \eqref{eqInternalEnergy03} satisfies the second law of thermodynamics as 
\begin{eqnarray}\label{eqTotalEntropyMax}
   \frac{\partial \mathcal{S}}{\partial t}+\int_{\partial\Omega}\frac{\q_{\partial\Omega}\cdot\bm\nu_{\partial\Omega} }{T}d\bm{s}&=& \left\|\frac{\Theta^{1/2}}{T} \grad T\right\|^2 +\frac{1}{2}\left\|\(\frac{\eta}{T}\)^{1/2}D\(\u\)\right\|^2 \nonumber\\
   &&+\left\|\(\frac{\lambda}{T}\)^{1/2}\div\u\right\|^2,
 \end{eqnarray}
\end{thm}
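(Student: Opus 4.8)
The plan is to start from the entropy identity \eqref{eqTotalEntropy}, which already splits $\frac{\partial\mathcal{S}}{\partial t}$ into the two pairings $\(\frac{\partial(\vartheta-f)}{\partial t},\frac{1}{T}\)$ and $-\(\frac{\partial T}{\partial t},\frac{s}{T}\)$, and to evaluate each using the balance laws already in hand. The central object is $\frac{\partial(\vartheta-f)}{\partial t}$, which I would obtain by subtracting the Helmholtz variation equation \eqref{eqHelmholtzTransportEqnPureGrad} from the internal-energy equation \eqref{eqInternalEnergy03}. In that subtraction the two copies of $\mu\div(n\u)$ cancel, and the two flux terms $\div\(\(\div(\u n)\)c\grad n\)$ cancel as well, leaving
$$\frac{\partial(\vartheta-f)}{\partial t}=-\div(sT\u)-\div\q-\u\cdot\gamma\grad T-\bm\sigma_{\textnormal{irrev}}:\grad\u-\gamma\frac{\partial T}{\partial t}.$$

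Next I would pair this against $\frac{1}{T}$ and integrate by parts term by term. The divergence terms $-\div(sT\u)$ and $-\div\q$ are treated with integration by parts against $\grad\frac{1}{T}=-\frac{1}{T^2}\grad T$; the boundary contribution of the first vanishes because $\u=0$ on $\partial\Omega$ by \eqref{eqEntropySecondb}, while the boundary part of the heat flux yields exactly $-\int_{\partial\Omega}\frac{\q_{\partial\Omega}\cdot\bm\nu_{\partial\Omega}}{T}d\bm{s}$. Inserting the Fourier law $\q=-\Theta\grad T$ from \eqref{eqthermalconductivityGrad02} turns the interior heat-flux term into $\left\|\frac{\Theta^{1/2}}{T}\grad T\right\|^2$. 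For the viscous term I would substitute the Newtonian form \eqref{eqTotalStressB} and use the symmetry identity $D(\u):\grad\u=\frac12|D(\u)|^2$ (valid since $D(\u)$ is symmetric) together with $\I:\grad\u=\div\u$, producing the dissipation norms $\frac12\left\|\(\frac{\eta}{T}\)^{1/2}D(\u)\right\|^2$ and $\left\|\(\frac{\lambda}{T}\)^{1/2}\div\u\right\|^2$.

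The step that makes everything collapse is the thermodynamic relation $s=-\gamma$ noted just below \eqref{eqDefDfDT}. After the integrations by parts there remain the stray interior terms $-\int_\Omega\frac{s}{T}\u\cdot\grad T\,d\x$ (arising from $\div(sT\u)$) and $-\int_\Omega\frac{\gamma}{T}\u\cdot\grad T\,d\x$ (from the $\gamma\grad T$ term), which cancel precisely because $\gamma=-s$; similarly the term $-\int_\Omega\frac{\gamma}{T}\frac{\partial T}{\partial t}d\x$ coming from the pairing cancels against $-\(\frac{\partial T}{\partial t},\frac{s}{T}\)$ from the second half of \eqref{eqTotalEntropy}, again by $\gamma=-s$. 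What survives is exactly the three dissipation norms plus the boundary heat-flux term, and moving the latter to the left-hand side gives \eqref{eqTotalEntropyMax}.

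I expect the main difficulty to be organizational rather than conceptual: carrying the signs correctly through the two integrations by parts and confirming that every non-dissipative term is matched by a partner of opposite sign. The one genuine insight is recognizing that both the advective temperature term and the $\frac{\partial T}{\partial t}$ term vanish through the single identification $\gamma=-s$; without it the leftover terms would fail to assemble into a manifestly nonnegative sum of squares, and the entropy-production structure of \eqref{eqTotalEntropyMax} would not emerge.
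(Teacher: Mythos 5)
Your proposal is correct and follows essentially the same route as the paper: both subtract the Helmholtz variation equation \eqref{eqHelmholtzTransportEqnPureGrad} from the internal-energy balance \eqref{eqInternalEnergy03}, insert the result into \eqref{eqTotalEntropy}, and use $s=-\gamma$ together with the boundary conditions and the Newtonian stress to produce the dissipation norms. The only difference is bookkeeping: the paper cancels the advective term pointwise via $\div(sT\u)=T\div(s\u)-\gamma\u\cdot\grad T$ before integrating, whereas you integrate by parts against $\grad\frac{1}{T}$ and cancel at the integral level, which is equivalent.
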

 where $\q_{\partial\Omega}$ denotes the  heat transfer flux between the system and  its environment and $\bm\nu_{\partial\Omega}$ denotes a normal unit outward vector  to the boundary $\partial \Omega$.
\begin{proof}
We combine \eqref{eqInternalEnergy03} and \eqref{eqHelmholtzTransportEqnPureGrad} and obtain
\begin{eqnarray}\label{eqEntropyProductionProof01}
  \frac{\partial \(\vartheta-f\)}{\partial t}+\div(sT\u)&=&-\div \q  - \u\cdot\gamma\grad T - \bm\sigma_{\textnormal{irrev}}:\grad\u -\gamma\frac{\partial T}{\partial t}.
   \end{eqnarray}
Since $$\div(sT\u)=T\div(s\u)+s\u\cdot\grad{T}=T\div(s\u)-\gamma\u\cdot\grad T,$$
the equation \eqref{eqEntropyProductionProof01} can be reduced into
\begin{eqnarray}\label{eqEntropyProductionProof02}
  \frac{\partial \(\vartheta-f\)}{\partial t}&=&-T\div(s\u)-\div \q   - \bm\sigma_{\textnormal{irrev}}:\grad\u -\gamma\frac{\partial T}{\partial t}.
   \end{eqnarray}
Substituting \eqref{eqEntropyProductionProof02} into \eqref{eqTotalEntropy}, and taking into account $s=-\gamma$, we derive the entropy variation with time 
\begin{eqnarray}\label{eqEntropyProductionProof03}
   \frac{\partial \mathcal{S}}{\partial t}&=& -\(\div(s\u),1\)+\(\div \Theta \grad T,\frac{1}{T}\)-\(\bm\sigma_{\textnormal{irrev}}:\grad\u,\frac{1}{T}\)\nonumber\\
   &=& -\int_{\partial\Omega}\frac{\q_{\partial\Omega}\cdot\bm\nu_{\partial\Omega} }{T}d\bm{s}+\left\|\frac{\Theta^{1/2}}{T} \grad T\right\|^2-\(\bm\sigma_{\textnormal{irrev}}:\grad\u,\frac{1}{T}\).
 \end{eqnarray}
Applying the formulation of $\bm\sigma_{\textnormal{irrev}}$ to  \eqref{eqEntropyProductionProof03}, we obtain the equation \eqref{eqTotalEntropyMax}.
\end{proof}

\section{Thermodynamically consistent    numerical method}
In this section, we focus on designing  semi-implicit time marching schemes,    which are based on the above simplified formulations and obey the laws of thermodynamics. 
For this purpose,  it is a key ingredient to construct the convex-concave splitting of  Helmholtz free energy density with respect to molar density and temperature.
The other challenge  results from the tight nonlinear coupling relation among  molar density, temperature  and velocity. In order to  alleviate this relation,  we will introduce an  auxiliary velocity, which depends on molar density and temperature.  Very careful  physical observations are also required  to treat this coupling relation by a way of semi-implicit time discretization.

We first consider the convex-concave splitting of bulk Helmholtz free energy density.
  It is noted that $ \psi_p$ in the formulation of $f_b$ is  the molar heat capacity of  the ideal gas at the constant pressure. We  recall  the following thermodynamical relation  for  the ideal gas
\begin{eqnarray}\label{eqRelationHeatCapacity}
 \psi_p=R+ \psi_v,
 \end{eqnarray}
where $R$ is the  universal gas constant and $\psi_v>0$ is the molar heat capacity at the constant volume for the ideal gas.
\begin{lem}\label{lemConvex}
The bulk Helmholtz free energy density $f_b$ can be split into two parts: one is a convex function with respect to $n$, denoted by $f_b^{\textnormal{convex}}(n,T)$, and the other is a concave function with respect to $n$, denoted by $f_b^{\textnormal{concave}}(n,T)$, which are formulated as
\begin{eqnarray}\label{eqConvex}
f_b^{\textnormal{convex}}(n,T)= f_b^{\textnormal{ideal}}(n,T) + f_b^{\textnormal{repulsion}}(n,T),
 \end{eqnarray}
 \begin{eqnarray}\label{eqConcave}
f_b^{\textnormal{concave}}(n,T)=f_b^{\textnormal{attraction}}(n,T).
 \end{eqnarray}
 Moreover, $f_b$ is concave with respect to the temperature. 
\end{lem}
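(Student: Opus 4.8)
The plan is to treat the convex--concave splitting in $n$ and the concavity in $T$ separately, since they rest on different computations. For the splitting in $n$ I would check the sign of $\partial^2/\partial n^2$ of each contribution at fixed $T$. In $f_b^{\textnormal{ideal}}$ every term is linear in $n$ except $-nRT\ln(P_0/(nRT))$, whose second derivative is $RT/n>0$; for $f_b^{\textnormal{repulsion}}=-nRT\ln(1-bn)$ a direct differentiation gives $bRT\big(\tfrac{1}{1-bn}+\tfrac{1}{(1-bn)^2}\big)$, which is positive under the physical constraint $0<bn<1$. Hence $f_b^{\textnormal{convex}}=f_b^{\textnormal{ideal}}+f_b^{\textnormal{repulsion}}$ is convex in $n$. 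Writing $f_b^{\textnormal{attraction}}=a(T)L(n)$ with $L(n)=\tfrac{n}{2\sqrt{2}\,b}\ln\big(\tfrac{1+(1-\sqrt{2})bn}{1+(1+\sqrt{2})bn}\big)$, I would show $\partial^2 f_b^{\textnormal{attraction}}/\partial n^2\le 0$; this concavity in $n$ is essentially the result already established in \cite{qiaosun2014}, which I would either cite or reprove by direct differentiation.

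The substantial new part is the concavity of $f_b$ in $T$, for which the plan is a termwise computation of $\partial^2 f_b/\partial T^2$ at fixed $n$. The conceptual guide is the thermodynamic identity $\partial f_b/\partial T=\gamma_b=-s_b$, so that $\partial^2 f_b/\partial T^2=-\partial s_b/\partial T=-c_v/T$, where $c_v$ is the heat capacity per unit volume at constant volume; concavity is thus equivalent to the physical stability condition $c_v>0$. To make this rigorous from the Peng--Robinson formulas I would compute the three contributions explicitly. The repulsion term is linear in $T$ and contributes nothing. For the ideal term, the derivatives of the polynomial heat-capacity piece and of the integral term $-nT\int_{T_0}^T \psi_p(\xi)/\xi\,d\xi$ produce two copies of $\pm n\psi_p'(T)$ that cancel, and combining what remains with the $nRT\ln T$ piece of $-nRT\ln(P_0/(nRT))$ leaves $\tfrac{n}{T}\big(R-\psi_p(T)\big)$; invoking the ideal-gas relation \eqref{eqRelationHeatCapacity}, namely $\psi_p=R+\psi_v$ with $\psi_v>0$, this equals $-n\psi_v/T<0$.

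For the attraction term I would use $\partial^2 f_b^{\textnormal{attraction}}/\partial T^2=a''(T)L(n)$, and here two sign facts combine favorably: $L(n)<0$ because for $0<bn<1$ the logarithm's argument lies in $(0,1)$, while $a''(T)>0$. The positivity of $a''(T)$ is the step I expect to require the most care: differentiating $a(T)=K\,\phi(T)^2$ twice with $\phi(T)=1+m(1-\sqrt{T/T_{c}})$ gives $a''=2K(\phi'^2+\phi\phi'')$, and the apparent indefiniteness resolves because the $-\phi'^2$-type contribution hidden in $\phi\phi''$ exactly cancels $\phi'^2$, leaving a single manifestly positive term proportional to $m(1+m)T^{-3/2}$ (using $m>0$ for physical substances). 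Consequently $a''(T)L(n)<0$, and summing the three contributions gives $\partial^2 f_b/\partial T^2=-n\psi_v/T+a''(T)L(n)<0$, which establishes concavity in $T$ and completes the proof.
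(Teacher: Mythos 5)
Your proposal is correct and takes essentially the same route as the paper: the paper likewise defers the convex--concave splitting in $n$ to \cite{qiaosun2014} and proves concavity in $T$ by computing $\frac{\partial^2 f_b}{\partial T^2}=-\frac{n\psi_v(T)}{T}+\frac{na''(T)}{2\sqrt{2}b}\ln\big(\frac{1+(1-\sqrt{2})bn}{1+(1+\sqrt{2})bn}\big)$ via $\psi_p=R+\psi_v$, then showing $a''(T)=\frac{ma(T)}{2T\sqrt{TT_c}}\frac{1+m}{(1+m(1-\sqrt{T_r}))^2}\geq 0$ against the negative logarithm, which is exactly the cancellation you describe. Your explicit second-derivative checks for the ideal and repulsion terms in $n$ and the interpretation $\partial^2 f_b/\partial T^2=-c_v/T$ are harmless extras the paper omits.
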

\begin{proof}
The convexity  and concavity of $f_b$ with respect to molar density have been mostly proved in \cite{qiaosun2014} although there exists a bit difference in the formulation of $f_b^{\textnormal{ideal}}(n,T)$.  We primarily prove the concave property of $f_b$ with respect to the temperature. The second derivative of $f_b$ with respect to $T$ can be calculated as
 \begin{eqnarray*}\label{eqSecDerEntropy}
    \frac{\partial^2 f_b(n,T)}{\partial T^2}&=&  \frac{nR}{T}-n\frac{\psi_p(T)}{T} +\frac{na''(T)}{2\sqrt{2}b}\ln\(\frac{1+(1-\sqrt{2})b n}{1+(1+\sqrt{2})b n}\)\nonumber\\
    &=&  -n\frac{\psi_v(T)}{T} +\frac{na''(T)}{2\sqrt{2}b}\ln\(\frac{1+(1-\sqrt{2})b n}{1+(1+\sqrt{2})b n}\),
\end{eqnarray*}
where we have used the relation \eqref{eqRelationHeatCapacity}.  From the definition of $a(T)$, we calculate
\begin{eqnarray*}
   a'(T)= -\frac{a(T)}{1+m(1-\sqrt{T_{r}})}\frac{m}{\sqrt{TT_{c}}},
\end{eqnarray*}
\begin{eqnarray*}
   a''(T)
   &=& \frac{ma(T)}{2T\sqrt{TT_{c}}}\frac{1+m}{\(1+m(1-\sqrt{T_{r}})\)^2} .
\end{eqnarray*}
We can see that $a''(T)\geq0$, and then we conclude that  $\frac{\partial^2 f_b(n,T)}{\partial T^2}\leq0$; that is, $f_b$ is concave with respect to the temperature.
\end{proof}

\begin{lem}\label{lemConvex}
The gradient contribution to the Helmholtz free energy density is always convex with respect to  molar density. Moreover,  it is concave with respect to the temperature if we take the temperature 
such that
\begin{eqnarray}\label{eqCondGradientEntropy}
 \(1+m\)\(\beta_1(1-T_{r})+\beta_2\)+4\beta_1T_r\(1+m(1-\sqrt{T_{r}})\)\leq0,
\end{eqnarray}
where $T_r=T/T_c$.
\end{lem}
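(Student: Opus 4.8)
The plan is to treat the two claims separately: convexity in the molar density is elementary, whereas concavity in the temperature requires the explicit second derivative of the influence parameter and is where the condition \eqref{eqCondGradientEntropy} comes from.

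For the molar-density convexity I would start from $f_\grad=\frac12 c(T)\,\grad n\cdot\grad n$ in \eqref{eqDefHEP01} and observe that, at fixed $T$, this is a nonnegative quadratic form in the vector $\grad n$ as soon as $c(T)\ge0$. From \eqref{eqDefinfluenceparameter} the sign of $c(T)$ is the sign of $\beta_1(1-T_r)+\beta_2$; with $\beta_1<0$ and $\beta_2>0$ this quantity is nonnegative over the physically relevant temperature range, so $c(T)\ge0$. Since $\grad n\cdot\grad n$ is convex in $\grad n$ and $n\mapsto\grad n$ is linear, the functional $n\mapsto f_\grad$ is convex; this settles the first assertion and is not the main difficulty.

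For the temperature concavity I would fix $\grad n$ and note that $f_\grad$ is concave in $T$ precisely when $c(T)$ is concave, i.e. when $c''(T)\le0$. Writing $c(T)=b^{2/3}a(T)g(T)$ with $g(T)=\beta_1(1-T_r)+\beta_2$ affine in $T$ (so $g''\equiv0$ and $g'(T)=-\beta_1/T_c$), the product rule gives $c''(T)=b^{2/3}\big(a''(T)g(T)+2a'(T)g'(T)\big)$. I would then substitute the expressions for $a'(T)$ and $a''(T)$ already obtained in the proof of the preceding lemma.

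The main work, and the step I expect to be most delicate, is the algebraic reduction of $c''(T)$ to the advertised sign condition. After inserting $a'$ and $a''$ I would factor out the common quantity $\dfrac{b^{2/3}m\,a(T)}{2T\sqrt{TT_c}\,(1+m(1-\sqrt{T_r}))^2}$, which is strictly positive once one checks $a(T)>0$ and $1+m(1-\sqrt{T_r})>0$. Using $1/T_c=T_r/T$ to place the two surviving terms over this common denominator, the remaining bracket collapses to exactly $(1+m)(\beta_1(1-T_r)+\beta_2)+4\beta_1 T_r(1+m(1-\sqrt{T_r}))$, so $c''(T)$ carries the same sign as this bracket. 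Condition \eqref{eqCondGradientEntropy} is then precisely the statement $c''(T)\le0$, giving concavity of $f_\grad$ in $T$. The points where care is needed are the bookkeeping of the factor $\sqrt{TT_c}$ and the substitution $1/T_c=T_r/T$; producing the coefficient $4$ (rather than $2$) on the second term, which arises when the $2T$ in the common denominator meets the $2a'g'$ cross term, is the easy place to slip.
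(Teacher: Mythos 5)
Your proposal is correct and takes essentially the same route as the paper: the paper likewise treats the molar-density convexity as immediate and proves temperature concavity by computing $c'(T)=a'(T)b^{2/3}[\beta_1(1-T_{r})+\beta_2]-a(T)b^{2/3}\beta_1/T_{c}$ and $c''(T)=a''(T)b^{2/3}[\beta_1(1-T_{r})+\beta_2]-2a'(T)b^{2/3}\beta_1/T_{c}$, substituting the expressions for $a'$ and $a''$ from the preceding lemma, and factoring out the positive quantity $\frac{ma(T)b^{2/3}}{2T\sqrt{TT_{c}}}\big(1+m(1-\sqrt{T_{r}})\big)^{-2}$ so that the sign of $c''(T)$ is exactly that of the bracket in \eqref{eqCondGradientEntropy}. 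Your bookkeeping, including the coefficient $4$ produced when the $2a'g'$ cross term meets the $2T\sqrt{TT_c}$ denominator via $1/T_{c}=T_{r}/T$, reproduces the paper's computation precisely.
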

\begin{proof}
It is obvious that the gradient contribution to the Helmholtz free energy density is convex with respect to  molar density.  We now consider its concavity  with respect to the temperature. 
The derivatives of $c(T)$  are calculated as
\begin{eqnarray*}
 c'(T)=a'(T)b^{2/3}\[\beta_1(1-T_{r})+\beta_2\]-a(T)b^{2/3}\frac{\beta_1}{T_{c}},
\end{eqnarray*}
\begin{eqnarray*}
 c''(T)=a''(T)b^{2/3}\[\beta_1(1-T_{r})+\beta_2\]-2a'(T)b^{2/3}\frac{\beta_1}{T_{c}}.
\end{eqnarray*}
Substituting $a'(T)$ and $a''(T)$ into $c''(T)$, we obtain 
\begin{eqnarray*}
 c''(T)
 &=&\frac{ma(T)b^{2/3}}{2T\sqrt{TT_{c}}}\frac{\(1+m\)\(\beta_1(1-T_{r})+\beta_2\)+4\beta_1T_r\(1+m(1-\sqrt{T_{r}})\)}{\(1+m(1-\sqrt{T_{r}})\)^2}.
 \end{eqnarray*}
which  yields the concavity combining the condition \eqref{eqCondGradientEntropy}.
\end{proof}

We make some remarks on the condition \eqref{eqCondGradientEntropy}. In   \eqref{eqCondGradientEntropy}, the parameter $\beta_1$ has a negative value, while the rest parameters are  positive, so the satisfaction of \eqref{eqCondGradientEntropy} is reasonable.    We have checked in numerical tests that the condition \eqref{eqCondGradientEntropy} is  satisfied for butane when the temperature lies in a large range from 0.1$T_c$ to 3$T_c$, where $T_c$ is the critical temperature of butane.  So in what follows, we assume that the condition \eqref{eqCondGradientEntropy} always holds for our considered problems.

We now contruct the semi-implicit time marching scheme.  
A  time interval $\mathcal{I}=(0,T_{f}]$, where $T_{f}>0$, is considered, and we divide $\mathcal{I}$ into   $M$ subintervals $\mathcal{I}_{k}=(t_{k},t_{k+1}]$, where $t_{0}=0$ and $t_{M}=T_{f}$. The time step size is denoted as $\delta t_{k}=t_{k+1}-t_{k}$.  For a scalar function $v(t)$ or a vector function $\mathbf{v}(t)$, we denote by $v^{k}$ or $\mathbf{v}^{k}$ its approximation at the time $t_{k}$.
First,
a semi-implicit time marching scheme accounting for the convex-splitting of Helmholtz free energy density is used to discretize  the chemical potential
\begin{eqnarray}\label{eqDiscreteChPtl}
 && \mu^{k+1}=\mu_b^{k+1}+ \mu_\grad(n^{k+1},T^{k+1}),\nonumber\\
&&  \mu_b^{k+1}=\mu_b^{\textnormal{convex}}(n^{k+1},T^{k+1})+ \mu_b^{\textnormal{concave}}(n^{k},T^{k+1}).
\end{eqnarray}
We define an auxiliary velocity as
\begin{eqnarray}\label{eqDecoupledDiscreteVelocityStar}
\u_\star^{k} = \u^k-\frac{\delta t_{k}}{\rho^k} \(n^{k}\grad \mu^{k+1}+ s^{k}\grad T^{k+1}\),
\end{eqnarray}
where $\rho^k=n^kM_w$.
  We take $\grad \mu^{k+1}\cdot\bm\nu_{\partial\Omega} =\grad T^{k+1}\cdot\bm\nu_{\partial\Omega} =0$ for $\u_\star^{k}$ on the boundary, and as a result, we have still $ \u_\star^{k}=0$ on the boundary. $\u_\star^{k}$ can be viewed as an approximation of $\u^{k+1} $ obtained by neglecting the   convection   and viscosity terms  in the momentum balance equation. Subsequently, a  semi-implicit scheme is designed as below:
\begin{eqnarray}\label{eqDecoupledDiscreteMassConserveEq01}
\frac{ n^{k+1}-n^k}{\delta t_k}+\div\(n^{k}\u_\star^{k}\)=0,
\end{eqnarray}
\begin{eqnarray}\label{eqDecoupledDiscreteMomentumConserve02}
&& \rho^{k}\(\frac{\u^{k+1}-\u^k}{\delta t_k}+\u_\star^{k}\cdot\grad\u^{k+1}\)= -n^{k}\grad \mu^{k+1}+ \gamma^{k}\grad T^{k+1}\nonumber\\
&&~~~~+ \div\eta^k D\(\u^{k+1}\)+\grad\(\lambda^k\div\u^{k+1}\)  ,
\end{eqnarray}
 \begin{eqnarray}\label{eqDecoupledDiscreteInternalEnergyEq01}
 &&\frac{ \vartheta^{k+1}-\vartheta^k}{\delta t_k}+\div\(\u_\star^{k}s^{k}T^{k+1}\) =-\div \q^{k+1}\nonumber\\
 &&-\div \(\(\div\(\u_\star^{k} n^{k}\)\)c^{k+1}\grad{n^{k+1}}\) - \mu^{k+1}\div(\u_\star^{k} n^{k})- \u_\star^{k}\cdot\gamma^{k}\grad T^{k+1}\nonumber\\
 &&+\eta^k D\(\u^{k+1}\):\grad\u^{k+1}+\lambda^k|\div\u^{k+1}|^2\nonumber\\
 &&+\frac{1}{2\delta t_{k}}\rho^{k}\(|\u^{k+1}-\u_\star^{k}|^2+|\u_\star^{k}-\u^{k}|^2\),
 \end{eqnarray} 
 where 
\begin{eqnarray*}\label{eqDiscretethermalconductivityGrad02}
  \q^{k+1}= -\Theta^{k} \grad T^{k+1},~~~~\Theta^{k}=\Theta(n^{k},T^k).
 \end{eqnarray*} 
 
 Using the identity
  \begin{eqnarray*}\label{eqDecoupledDiscreteInternalEnergyProof01}
 \div\(\u_\star^{k}s^{k}T^{k+1}\)=T^{k+1}\div\(\u_\star^{k}s^{k}\)- \u_\star^{k}\cdot\gamma^{k}\grad T^{k+1} ,
  \end{eqnarray*} 
 we can also reformulate \eqref{eqDecoupledDiscreteInternalEnergyEq01} as
 \begin{eqnarray}\label{eqDecoupledDiscreteInternalEnergyEq02}
 &&\frac{ \vartheta^{k+1}-\vartheta^k}{\delta t_k}+T^{k+1}\div\(\u_\star^{k}s^{k}\)=-\div \q^{k+1}\nonumber\\
 &&~~-\div \(\(\div\(\u_\star^{k} n^{k}\)\)c^{k+1}\grad{n^{k+1}}\)- \mu^{k+1}\div(\u_\star^{k} n^{k}) \nonumber\\
 &&~~+\eta^k D\(\u^{k+1}\):\grad\u^{k+1}+\lambda^k|\div\u^{k+1}|^2\nonumber\\
 &&~~+\frac{1}{2\delta t_{k}}\rho^{k}\(|\u^{k+1}-\u_\star^{k}|^2+|\u_\star^{k}-\u^{k}|^2\).
 \end{eqnarray}

 We now prove that the above semi-implicit scheme  obeys the   laws of thermodynamics. To do this,  we  define the discrete formulations of  total energy, kinetic energy and  internal energy over the domain at the time $t_k$ as
 \begin{eqnarray*}
\mathcal{E}^k =\mathcal{H}^k+\mathcal{U}^k,~~ \mathcal{H}^k=\frac{1}{2}\int_\Omega\rho^k|\u^k|^2d\x,~~\mathcal{U}^k=\int_\Omega\vartheta^k d\x.
\end{eqnarray*}

\begin{thm}\label{lemDecoupledDiscreteFirstLaw}
The semi-implicit scheme given by  \eqref{eqDiscreteChPtl}-\eqref{eqDecoupledDiscreteInternalEnergyEq01}  satisfies  the first law of thermodynamics as
\begin{eqnarray}\label{eqDecoupledDiscreteFirstLaw}
   \frac{ \mathcal{E}^{k+1}-\mathcal{E}^{k}}{\delta t_k} &=& -\int_{\partial\Omega}\q_{\partial\Omega}^{k+1}\cdot\bm\nu_{\partial\Omega} d\bm{s},
 \end{eqnarray}
 where $\q_{\partial\Omega}^{k+1}$ denotes the  heat transfer flux between the system and  its environment  at the time $t^{k+1}$.
\end{thm}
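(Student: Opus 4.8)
The plan is to reproduce, at the discrete level, the structure of the continuous proof of the first law \eqref{eqTotalEnergy}: split the total energy increment $\mathcal{E}^{k+1}-\mathcal{E}^k$ into a kinetic part and an internal part, and show that every interior coupling term cancels so that only the boundary heat flux survives. First I would integrate the discrete internal energy balance \eqref{eqDecoupledDiscreteInternalEnergyEq01} over $\Omega$. The three divergence terms $\div(\u_\star^{k}s^{k}T^{k+1})$, $\div\q^{k+1}$, and $\div\((\div(\u_\star^{k}n^{k}))c^{k+1}\grad n^{k+1}\)$ turn into boundary integrals; invoking the boundary conditions $\u_\star^{k}=0$ and $\grad n^{k+1}\cdot\bm\nu_{\partial\Omega}=0$ on $\partial\Omega$, only $-\int_{\partial\Omega}\q_{\partial\Omega}^{k+1}\cdot\bm\nu_{\partial\Omega}\,d\bm{s}$ remains. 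This expresses $(\mathcal{U}^{k+1}-\mathcal{U}^{k})/\delta t_k$ as the sum of this boundary term, the chemical-potential/temperature work $-\int_\Omega\mu^{k+1}\div(\u_\star^{k}n^{k})-\int_\Omega\u_\star^{k}\cdot\gamma^{k}\grad T^{k+1}$, the viscous production $+\int_\Omega(\eta^{k}D(\u^{k+1}):\grad\u^{k+1}+\lambda^{k}|\div\u^{k+1}|^2)$, and the quadratic correction $\frac{1}{2\delta t_k}\int_\Omega\rho^{k}(|\u^{k+1}-\u_\star^{k}|^2+|\u_\star^{k}-\u^{k}|^2)$.

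Next I would compute the kinetic energy increment by testing the discrete momentum equation \eqref{eqDecoupledDiscreteMomentumConserve02} with $\delta t_k\u^{k+1}$ and integrating over $\Omega$. The polarization identity $\rho^{k}(\u^{k+1}-\u^{k})\cdot\u^{k+1}=\frac{1}{2}\rho^{k}(|\u^{k+1}|^2-|\u^{k}|^2)+\frac{1}{2}\rho^{k}|\u^{k+1}-\u^{k}|^2$, together with the discrete mass balance $\frac{\rho^{k+1}-\rho^{k}}{\delta t_k}=-\div(\rho^{k}\u_\star^{k})$ derived from \eqref{eqDecoupledDiscreteMassConserveEq01}, converts the tested equation into an expression for $\mathcal{H}^{k+1}-\mathcal{H}^{k}$. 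The convection term, after integration by parts (its boundary term vanishes because $\u_\star^{k}=0$), equals $\frac{1}{2}\int_\Omega(\rho^{k+1}-\rho^{k})|\u^{k+1}|^2$ and exactly cancels the density-change contribution arising from $\mathcal{H}^{k+1}$ using $\rho^{k+1}$ rather than $\rho^{k}$. The viscous terms integrate by parts into $-\delta t_k\int_\Omega(\eta^{k}D(\u^{k+1}):\grad\u^{k+1}+\lambda^{k}|\div\u^{k+1}|^2)$, carrying the opposite sign to the dissipation already produced in the internal energy equation.

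The decisive step uses the definition \eqref{eqDecoupledDiscreteVelocityStar} of the auxiliary velocity, which (recalling $s^{k}=-\gamma^{k}$) reads $-n^{k}\grad\mu^{k+1}+\gamma^{k}\grad T^{k+1}=\rho^{k}(\u_\star^{k}-\u^{k})/\delta t_k$. This rewrites the driving-force work in $\mathcal{H}^{k+1}-\mathcal{H}^{k}$ as $\int_\Omega\rho^{k}\u^{k+1}\cdot(\u_\star^{k}-\u^{k})$, and, after integrating $-\int_\Omega\mu^{k+1}\div(\u_\star^{k}n^{k})$ by parts, collapses the chemical-potential/temperature work in $\mathcal{U}^{k+1}-\mathcal{U}^{k}$ into $-\int_\Omega\rho^{k}\u_\star^{k}\cdot(\u_\star^{k}-\u^{k})$. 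Adding the two increments, the viscous integrals cancel and only the heat flux together with a collection of purely quadratic velocity terms survives. The proof then closes on the pointwise algebraic identity
$$\u^{k+1}\cdot(\u_\star^{k}-\u^{k})-\frac{1}{2}|\u^{k+1}-\u^{k}|^2-\u_\star^{k}\cdot(\u_\star^{k}-\u^{k})+\frac{1}{2}\(|\u^{k+1}-\u_\star^{k}|^2+|\u_\star^{k}-\u^{k}|^2\)=0,$$
which follows by direct expansion, yielding \eqref{eqDecoupledDiscreteFirstLaw}.

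I expect the main obstacle to be careful bookkeeping rather than any deep difficulty. One must track the density weighting through the time-varying $\rho^{k}$, applying the discrete mass conservation at precisely the right places so that the $(\rho^{k+1}-\rho^{k})$ contributions cancel, and one must recognize that the extra term $\frac{1}{2\delta t_k}\rho^{k}(|\u^{k+1}-\u_\star^{k}|^2+|\u_\star^{k}-\u^{k}|^2)$ deliberately inserted in \eqref{eqDecoupledDiscreteInternalEnergyEq01} is exactly the numerical kinetic-energy dissipation generated by the decoupled, auxiliary-velocity time stepping. Identifying that this engineered term is what makes the final quadratic identity vanish is the conceptual key; everything else reduces to integration by parts and the polarization identity.
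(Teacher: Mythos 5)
Your proposal is correct and follows essentially the same route as the paper: integrate the discrete internal energy balance over $\Omega$, test the momentum equation with $\u^{k+1}$, cancel the convection term against the $(\rho^{k+1}-\rho^{k})$ contribution via the discrete mass balance and a polarization identity, convert the chemical-potential/temperature work through the auxiliary-velocity definition \eqref{eqDecoupledDiscreteVelocityStar} with $s^k=-\gamma^k$, and observe that the engineered quadratic term in \eqref{eqDecoupledDiscreteInternalEnergyEq01} absorbs the numerical kinetic dissipation. The only (cosmetic) difference is bookkeeping: the paper routes the algebra through the intermediate kinetic energy $\mathcal{H}_\star^{k}=\frac{1}{2}\int_\Omega\rho^{k}|\u_\star^{k}|^2\,d\x$, obtained by testing \eqref{eqDecoupledDiscreteVelocityStar} with $\u_\star^{k}$ and rewriting the momentum equation with the difference quotient $(\u^{k+1}-\u_\star^{k})/\delta t_k$, whereas you substitute the force term directly and close with a single expanded quadratic identity --- the two are algebraically equivalent.
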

\begin{proof}
Multiplying both sides of \eqref{eqDecoupledDiscreteMomentumConserve02} by $\u^{k+1}$ and   integrating it over $\Omega$,   we obtain
\begin{eqnarray}\label{eqDiscreteMultiComponentMomentumConserveProof01}
&& \(\rho^{k}\frac{\u^{k+1}-\u_\star^{k}}{\delta t_k},\u^{k+1}\)+\(\rho^{k}\u_\star^{k}\cdot\grad\u^{k+1},\u^{k+1}\)\nonumber\\
&&~~=- \left\|\sqrt{\lambda^k}\div\u^{k+1}\right\|^2 -\frac{1}{2} \left\|\sqrt{\eta^k}D\(\u^{k+1}\)\right\|^2.
\end{eqnarray}
Using \eqref{eqDecoupledDiscreteMassConserveEq01} and taking into account $\rho^k=n^kM_w$, we estimate
 \begin{eqnarray}\label{eqDiscreteMultiComponentMomentumConserveProof02}
&&\(\rho^{k}\(\u^{k+1}-\u_\star^{k}\),\u^{k+1}\)-\frac{1}{2}\(\rho^{k},|\u^{k+1}-\u_\star^{k}|^2\)\nonumber\\
&=&\frac{1}{2}\(\rho^{k},|\u^{k+1}|^2-|\u_\star^{k}|^2\)\nonumber\\
   &=&\mathcal{H}^{k+1}-\mathcal{H}_\star^{k}-\frac{1}{2}\(\rho^{k+1}-\rho^{k},|\u^{k+1}|^2\)\nonumber\\
   &=&\mathcal{H}^{k+1}-\mathcal{H}_\star^{k}+\frac{\delta t_{k}}{2}\(\div(\rho^{k}\u_\star^{k}),|\u^{k+1}|^2\)\nonumber\\
   &=&\mathcal{H}^{k+1}-\mathcal{H}_\star^{k}-\delta t_{k}\(\rho^{k}\u_\star^{k}\cdot\grad\u^{k+1},\u^{k+1}\) ,
\end{eqnarray}
where $$\mathcal{H}_\star^k=\frac{1}{2}\int_\Omega\rho^k|\u_\star^k|^2d\x.$$
Substituting \eqref{eqDiscreteMultiComponentMomentumConserveProof02} into \eqref{eqDiscreteMultiComponentMomentumConserveProof01} yields
\begin{eqnarray}\label{eqDecoupledDiscreteMultiComponentMomentumConserveProof03}
\frac{\mathcal{H}^{k+1}-\mathcal{H}_\star^{k}}{\delta t_{k}}&=&-\frac{1}{2\delta t_{k}}\(\rho^{k},|\u^{k+1}-\u_\star^{k}|^2\)\nonumber\\
   &&-\left\|\sqrt{\lambda^k}\div\u^{k+1}\right\|^2-\frac{1}{2}\left\|\sqrt{\eta^k}D\(\u^{k+1}\)\right\|^2.
\end{eqnarray}
We multiply both sides of \eqref{eqDecoupledDiscreteVelocityStar} by $\u_\star^{k}$ and then integrate it over $\Omega$ 
\begin{eqnarray}\label{eqDecoupledDiscreteVelocityStarProof01}
\mathcal{H}_\star^{k}-\mathcal{H}^{k}&=&\(\rho^k\(\u_\star^{k}-\u^k\), \u_\star^{k}\)-\frac{1}{2}\(\rho^{k},|\u_\star^{k}-\u^{k}|^2\)\nonumber\\
&=&-\delta t_{k}\(n^{k}\grad \mu^{k+1}+ s^{k}\grad T^{k+1}, \u_\star^{k}\)-\frac{1}{2}\(\rho^{k},|\u_\star^{k}-\u^{k}|^2\)\nonumber\\
&=&\delta t_{k}\(\(\div\(n^{k}\u_\star^{k}\), \mu^{k+1}\)- \(s^{k}\grad T^{k+1} ,\u_\star^{k}\)\)-\frac{1}{2}\(\rho^{k},|\u_\star^{k}-\u^{k}|^2\).
\end{eqnarray}
Integrating \eqref{eqDecoupledDiscreteInternalEnergyEq01} over $\Omega$ yields
\begin{eqnarray}\label{eqDecoupledDiscreteInternalEnergy03}
   &&\frac{ \mathcal{U}^{k+1}-\mathcal{U}^k}{\delta t_k}=-\int_{\partial\Omega}\q_{\partial\Omega}^{k+1}\cdot\bm\nu_{\partial\Omega} d\bm{s} - \(\mu^{k+1},\div(\u_\star^{k} n^{k})\) \nonumber\\
 &&- \(\u_\star^{k},\gamma^{k}\grad T^{k+1}\)+\left\|\sqrt{\lambda^k}\div\u^{k+1}\right\|^2+\frac{1}{2}\left\|\sqrt{\eta^k}D\(\u^{k+1}\)\right\|^2\nonumber\\
 &&+\frac{1}{2\delta t_{k}}\(\rho^{k},|\u^{k+1}-\u_\star^{k}|^2+|\u_\star^{k}-\u^{k}|^2\).
 \end{eqnarray}
 Summing  \eqref{eqDecoupledDiscreteMultiComponentMomentumConserveProof03}, \eqref{eqDecoupledDiscreteVelocityStarProof01} and \eqref{eqDecoupledDiscreteInternalEnergy03} yields \eqref{eqDecoupledDiscreteFirstLaw}.
  \end{proof}
  
We turn to prove that the proposed semi-implicit scheme obeys the second law of thermodynamics.  We first need to prove a discrete analog of Lemma \ref{lemTransportHelmholtzTransportEqnPureGrad}. 
 \begin{lem}\label{lemDecoupledDiscreteHelmholtzTransportEqnPureGrad}
Assume that the condition \eqref{eqCondGradientEntropy} holds. The discrete Helmholtz free energy densities satisfy 
\begin{eqnarray}\label{eqDecoupledDiscreteHelmholtzTransportEqnPureGrad}
 \frac{ f^{k+1}-f^k}{\delta t_k}
 &\leq&\gamma^{k}\frac{ T^{k+1}-T^{k}}{\delta t_k}-\mu^{k+1}\div\(n^{k}\u_\star^{k}\)\nonumber\\
 &&-\div \(\(\div\(\u_\star^{k} n^{k}\)\)c^{k+1}\grad{n^{k+1}}\),
\end{eqnarray}
where $f^k=f(n^{k},T^k)$.
\end{lem}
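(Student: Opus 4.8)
The plan is to follow the continuous computation of Lemma~\ref{lemTransportHelmholtzTransportEqnPureGrad} step by step, but to replace every exact differential identity by a one-sided tangent-line inequality furnished by the convexity and concavity established in Lemma~\ref{lemConvex}. First I would write $f=f_b+f_\grad$ and telescope each increment across the time step into a density variation at the frozen temperature $T^{k+1}$ and a temperature variation at the frozen density, writing
\[
f_b^{k+1}-f_b^k=\big[f_b(n^{k+1},T^{k+1})-f_b(n^k,T^{k+1})\big]+\big[f_b(n^k,T^{k+1})-f_b(n^k,T^k)\big],
\]
and likewise for $f_\grad$. This splitting is precisely what licenses the use of the single-variable convexity results.

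For the density variation of $f_b$ I would invoke the splitting $f_b=f_b^{\textnormal{convex}}+f_b^{\textnormal{concave}}$: convexity of $f_b^{\textnormal{convex}}$ gives the tangent inequality evaluated at $n^{k+1}$ and concavity of $f_b^{\textnormal{concave}}$ gives the tangent inequality evaluated at $n^k$, so that their sum is bounded by $\mu_b^{k+1}(n^{k+1}-n^k)$ with $\mu_b^{k+1}$ exactly the slope defined in \eqref{eqDiscreteChPtl}. The decisive feature is that the two evaluation points $n^{k+1}$ and $n^k$ are dictated by the scheme, not chosen freely. For the temperature variation, concavity of $f_b$ in $T$ (Lemma~\ref{lemConvex}) yields $f_b(n^k,T^{k+1})-f_b(n^k,T^k)\le\gamma_b^k(T^{k+1}-T^k)$, where $\gamma_b^k=\partial_T f_b(n^k,T^k)$.

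The gradient term is treated analogously. At the frozen temperature $T^{k+1}$ the quantity $\tfrac12 c(T^{k+1})|\grad n|^2$ is a convex quadratic in $\grad n$, so the tangent inequality at $\grad n^{k+1}$ bounds its density increment by $c^{k+1}\grad n^{k+1}\cdot\grad(n^{k+1}-n^k)$. I would then insert the mass balance \eqref{eqDecoupledDiscreteMassConserveEq01} in the form $n^{k+1}-n^k=-\delta t_k\div(n^k\u_\star^k)$ and apply the discrete analogue of the product-rule identity used in \eqref{eqHelmholtzTransportProof02}, namely $-c^{k+1}\grad n^{k+1}\cdot\grad(\div(n^k\u_\star^k))=-\div\big((\div(n^k\u_\star^k))c^{k+1}\grad n^{k+1}\big)-\mu_\grad^{k+1}\div(n^k\u_\star^k)$, using $\mu_\grad^{k+1}=-\div(c^{k+1}\grad n^{k+1})$. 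The temperature increment is linear in $c$, so concavity of $c$ under condition \eqref{eqCondGradientEntropy} (Lemma~\ref{lemConvex}) gives $\tfrac12|\grad n^k|^2\big(c(T^{k+1})-c(T^k)\big)\le\gamma_\grad^k(T^{k+1}-T^k)$.

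Adding the four estimates, dividing by $\delta t_k$, and collecting terms finishes the argument: $\mu_b^{k+1}+\mu_\grad^{k+1}=\mu^{k+1}$ assembles the coupled term $-\mu^{k+1}\div(n^k\u_\star^k)$, the relation $\gamma_b^k+\gamma_\grad^k=\gamma^k$ assembles the temperature term, and the product-rule step supplies the divergence term, reproducing \eqref{eqDecoupledDiscreteHelmholtzTransportEqnPureGrad}. I expect the main obstacle to be bookkeeping rather than analysis: one must track the four evaluation points so that every tangent inequality carries the $\le$ sign in the same direction and so that the residual slopes recombine into exactly $\mu^{k+1}$ and $\gamma^k$. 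The temperature concavity is the one place where the continuous equality genuinely degrades to an inequality, which is precisely why condition \eqref{eqCondGradientEntropy} is required to control the gradient contribution.
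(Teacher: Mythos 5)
Your proposal is correct and follows essentially the same route as the paper's proof: the same splitting of each increment into a density variation at frozen $T^{k+1}$ and a temperature variation at frozen $n^k$, the same tangent-line inequalities from the convex-concave splitting (convex part at $n^{k+1}$, concave part at $n^k$, concavity in $T$ at $T^k$), the same insertion of the discrete mass balance followed by the product-rule identity to produce the divergence and $\mu_\grad^{k+1}$ terms, and the same use of concavity of $c$ under \eqref{eqCondGradientEntropy}. Your ``convex quadratic in $\grad n$'' phrasing is just the paper's algebraic step of writing $\tfrac12 c^{k+1}\grad(n^{k+1}+n^k)\cdot\grad(n^{k+1}-n^k)$ and discarding $-\tfrac12 c^{k+1}|\grad(n^{k+1}-n^k)|^2\leq 0$, so the two arguments coincide.
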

\begin{proof}
We utilize  the properties of convex and concave functions to estimate the bulk Helmholtz free energy $f_b$ as
\begin{eqnarray}\label{eqDecoupledDiscreteHelmholtzTransport01}
 \frac{ f_b^{k+1}-f_b^k}{\delta t_k}
 &=& \frac{ f_b(n^{k+1},T^{k+1})-f_b(n^{k},T^{k+1})}{\delta t_k}+\frac{ f_b(n^{k},T^{k+1})-f_b(n^{k},T^k)}{\delta t_k}\nonumber\\
 &\leq&\mu^{k+1}_b\frac{ n^{k+1}-n^{k}}{\delta t_k}+\gamma_b^{k}\frac{ T^{k+1}-T^{k}}{\delta t_k}\nonumber\\
 &=&-\mu^{k+1}_b\div\(n^{k}\u_\star^{k}\)+\gamma_b^{k}\frac{ T^{k+1}-T^{k}}{\delta t_k},
\end{eqnarray}
where we have also used the discrete equation of mass balance. Next, we consider the difference  of the gradient contributions to Helmholtz free energy between two time steps as
\begin{eqnarray}\label{eqDecoupledDiscreteHelmholtzTransport02}
 f_\grad^{k+1}-f_\grad^k
 &=& \frac{1}{2} c^{k+1} \nabla n^{k+1}\cdot\nabla n^{k+1}-\frac{1}{2} c^k \nabla n^k\cdot\nabla n^k\nonumber\\
 &=& \frac{1}{2} c^{k+1} \(\nabla n^{k+1}\cdot\nabla n^{k+1}- \nabla n^k\cdot\nabla n^k\)+\frac{1}{2} \(c^{k+1}-c^k\) \nabla n^k\cdot\nabla n^k\nonumber\\
 &\leq& \frac{1}{2} c^{k+1} \nabla \(n^{k+1}+n^k\)\cdot\nabla \(n^{k+1}-  n^k\)+\gamma_\grad^k \(T^{k+1}-T^k\),
\end{eqnarray}
where the concavity of the function $c$ with respect to the temperature is used to get the last inequality.  Using the discrete equation of mass balance and the definition of $\mu^{k+1}_\grad$, we derive
\begin{eqnarray}\label{eqDecoupledDiscreteHelmholtzTransport03}
&&\frac{1}{2} c^{k+1} \nabla \(n^{k+1}+n^k\)\cdot\nabla \(n^{k+1}-  n^k\) \nonumber\\
 &&~~=c^{k+1} \nabla n^{k+1}\cdot\nabla \(n^{k+1}-  n^k\)-\frac{1}{2} c^{k+1} \nabla \(n^{k+1}-n^k\)\cdot\nabla \(n^{k+1}-  n^k\) \nonumber\\
 && ~~\leq- \delta t_k c^{k+1} \nabla n^{k+1}\cdot\nabla \(\div\(\u_\star^{k}n^{k}\)\)  \nonumber\\
 &&~~\leq- \delta t_k\div \(\(\div\(\u_\star^{k} n^{k}\)\)c^{k+1}\grad{n^{k+1}}\)- \delta t_k\mu^{k+1}_\grad\div\(\u_\star^{k}n^{k}\).
\end{eqnarray}
Substituting \eqref{eqDecoupledDiscreteHelmholtzTransport03} into \eqref{eqDecoupledDiscreteHelmholtzTransport02} yields 
 \begin{eqnarray}\label{eqDecoupledDiscreteHelmholtzTransport04}
 \frac{f_\grad^{k+1}-f_\grad^k}{\delta t_k}
 &\leq& \gamma_\grad^k \frac{T^{k+1}-T^k}{\delta t_k}- \mu^{k+1}_\grad\div\(\u_\star^{k}n^{k}\)\nonumber\\
 &&- \div \(\(\div\(\u_\star^{k} n^{k}\)\)c^{k+1}\grad{n^{k+1}}\).
\end{eqnarray}
Finally, \eqref{eqDecoupledDiscreteHelmholtzTransportEqnPureGrad} is a result of summing \eqref{eqDecoupledDiscreteHelmholtzTransport01} and  \eqref{eqDecoupledDiscreteHelmholtzTransport04}.
\end{proof}

\begin{thm}\label{lemDecoupledDiscreteSecondLaw}
Assume that the condition \eqref{eqCondGradientEntropy} holds. The semi-implicit scheme given by  \eqref{eqDiscreteChPtl}-\eqref{eqDecoupledDiscreteInternalEnergyEq01}   satisfies  the second law of thermodynamics as
\begin{eqnarray}\label{eqDecoupledDiscreteSecondLaw}
  && \frac{ \mathcal{S}^{k+1}-\mathcal{S}^{k}}{\delta t_{k}}+\int_{\partial\Omega}\frac{\q_{\partial\Omega}^{k+1}\cdot\bm\nu_{\partial\Omega} }{T^{k+1}}d\bm{s}\geq\left\|\frac{\sqrt{\Theta^{k+1}}}{T^{k+1}} \grad{T^{k+1}}\right\|^2+\(\eta^k D\(\u^{k+1}\):\grad\u^{k+1},\frac{1}{T^{k+1}}\)\nonumber\\
  &&~~~~~~+\(\lambda^k|\div\u^{k+1}|^2+\frac{1}{2\delta t_{k}}\rho^{k}\(|\u^{k+1}-\u_\star^{k}|^2+|\u_\star^{k}-\u^{k}|^2\),\frac{1}{T^{k+1}}\)\geq0,
 \end{eqnarray}
  where $\q_{\partial\Omega}^{k+1}$ denotes the  heat transfer flux between the system and its environment at the time $t^{k+1}$.
\end{thm}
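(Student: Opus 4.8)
The plan is to reproduce, at the discrete level, the continuous entropy argument of the second law (Theorem~3.3), with the convex-concave splitting supplying a one-sided inequality in place of the continuous identity. I would take $\mathcal{S}^k=\int_\Omega s^k\,d\x$ with $s^k=s(n^k,T^k)=-\gamma^k$, and work from the reformulated internal-energy update \eqref{eqDecoupledDiscreteInternalEnergyEq02} together with the discrete free-energy estimate \eqref{eqDecoupledDiscreteHelmholtzTransportEqnPureGrad} of Lemma~\ref{lemDecoupledDiscreteHelmholtzTransportEqnPureGrad}, which is available because the standing assumption \eqref{eqCondGradientEntropy} holds. First I would subtract the free-energy estimate from the internal-energy update to bound the increment of $\vartheta-f$. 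Since \eqref{eqDecoupledDiscreteInternalEnergyEq02} is an equality whereas \eqref{eqDecoupledDiscreteHelmholtzTransportEqnPureGrad} is a ``$\leq$'', negating the latter yields a ``$\geq$'' lower bound on $-(f^{k+1}-f^k)/\delta t_k$; upon adding, the chemical-potential term $\mu^{k+1}\div(n^k\u_\star^k)$ and the density-gradient divergence term $\div\left((\div(\u_\star^k n^k))c^{k+1}\grad n^{k+1}\right)$ cancel identically, leaving
\begin{equation*}
\frac{(\vartheta^{k+1}-f^{k+1})-(\vartheta^k-f^k)}{\delta t_k}\ \geq\ -T^{k+1}\div(\u_\star^k s^k)-\div\q^{k+1}-\gamma^k\frac{T^{k+1}-T^k}{\delta t_k}+\mathcal{D}^{k+1},
\end{equation*}
where $\mathcal{D}^{k+1}=\eta^k D(\u^{k+1}):\grad\u^{k+1}+\lambda^k|\div\u^{k+1}|^2+\frac{1}{2\delta t_k}\rho^k(|\u^{k+1}-\u_\star^k|^2+|\u_\star^k-\u^k|^2)$ collects the dissipation.

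Next I would invoke the pointwise relation $\vartheta-f=Ts$ at both levels, so the left-hand side equals $(T^{k+1}s^{k+1}-T^k s^k)/\delta t_k$, and split it by a discrete product rule as $T^{k+1}(s^{k+1}-s^k)/\delta t_k+s^k(T^{k+1}-T^k)/\delta t_k$. Because $s^k=-\gamma^k$, the second piece is exactly $-\gamma^k(T^{k+1}-T^k)/\delta t_k$ and cancels the identical term inherited from the free-energy estimate. Dividing the surviving inequality by $T^{k+1}>0$ then isolates $(s^{k+1}-s^k)/\delta t_k$ on the left and produces on the right the entropy convective flux $-\div(\u_\star^k s^k)$, the scaled heat term $-\frac{1}{T^{k+1}}\div\q^{k+1}$, and the scaled dissipation $\mathcal{D}^{k+1}/T^{k+1}$. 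This is precisely the discrete counterpart of the continuous computation $\partial_t s=\frac{1}{T}\partial_t(\vartheta-f)-\frac{s}{T}\partial_t T$ used in Theorem~3.3.

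I would then integrate over $\Omega$. The convective flux integrates to zero since $\u_\star^k=0$ on $\partial\Omega$. Integrating $-\frac{1}{T^{k+1}}\div\q^{k+1}$ by parts produces the boundary contribution $-\int_{\partial\Omega}\frac{\q_{\partial\Omega}^{k+1}\cdot\bm\nu_{\partial\Omega}}{T^{k+1}}\,d\bm{s}$ together with $\int_\Omega\q^{k+1}\cdot\grad\frac{1}{T^{k+1}}\,d\x$; inserting $\q^{k+1}=-\Theta^k\grad T^{k+1}$ and $\grad\frac{1}{T^{k+1}}=-\frac{1}{(T^{k+1})^2}\grad T^{k+1}$ turns the volume integral into the nonnegative square $\left\|\frac{\sqrt{\Theta^k}}{T^{k+1}}\grad T^{k+1}\right\|^2$, while the weighted dissipation $\mathcal{D}^{k+1}/T^{k+1}$ integrates to the inner-product terms on the right of \eqref{eqDecoupledDiscreteSecondLaw}. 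Moving the boundary term to the left then gives exactly the first inequality in \eqref{eqDecoupledDiscreteSecondLaw}.

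Finally, the terminal $\geq 0$ follows from nonnegativity of every right-hand piece: $T^{k+1}>0$ and $\Theta^k\geq 0$ make the heat square nonnegative; $\lambda^k=\xi-\frac{2}{3}\eta>0$ by the standing viscosity assumption; the algebraic identity $D(\u^{k+1}):\grad\u^{k+1}=\frac{1}{2}|D(\u^{k+1})|^2\geq 0$ handles the shear term; and the squared velocity increments are manifestly nonnegative. The main obstacle is the bookkeeping around the $\gamma^k(T^{k+1}-T^k)/\delta t_k$ term: its cancellation requires matching the discrete product-rule split of $T^{k+1}s^{k+1}-T^k s^k$ against the concavity estimate of Lemma~\ref{lemDecoupledDiscreteHelmholtzTransportEqnPureGrad}, and the choice to evaluate $\gamma$ at level $k$ while dividing by $T^{k+1}$ must be exactly consistent for the term to drop out; this consistency is what lets the proof close cleanly as a genuine inequality rather than an equality.
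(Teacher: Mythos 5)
Your proposal is correct and takes essentially the same route as the paper: the exact discrete identity $s^{k+1}-s^k=\frac{(\vartheta^{k+1}-\vartheta^k)-(f^{k+1}-f^k)}{T^{k+1}}-\frac{s^k(T^{k+1}-T^k)}{T^{k+1}}$ (your product-rule split of $T^{k+1}s^{k+1}-T^k s^k$ is this same identity), the free-energy estimate of Lemma \ref{lemDecoupledDiscreteHelmholtzTransportEqnPureGrad} under condition \eqref{eqCondGradientEntropy}, the reformulated internal-energy equation \eqref{eqDecoupledDiscreteInternalEnergyEq02} (whose convective term $T^{k+1}\div(\u_\star^k s^k)$ drops after weighting by $1/T^{k+1}$ and integrating, since $\u_\star^k=0$ on $\partial\Omega$), and the same cancellations of the $\gamma^k(T^{k+1}-T^k)$, chemical-potential and density-gradient divergence terms, with only the order of pointwise manipulation versus integration differing trivially. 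One cosmetic remark: your $\sqrt{\Theta^k}$ is what the scheme's definition $\q^{k+1}=-\Theta^k\grad T^{k+1}$ actually yields, so the $\sqrt{\Theta^{k+1}}$ in the paper's statement and proof is an internal notational inconsistency of the paper rather than a defect of your argument.
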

\begin{proof}
Since $s=\frac{1}{T}\(\vartheta-f\)$,  we obtain 
 \begin{eqnarray}\label{eqDecoupledDiscreteEntropyProduction01}
   \frac{ \mathcal{S}^{k+1}-\mathcal{S}^k}{\delta t_k}&=&\(\frac{ s^{k+1}-s^k}{\delta t_k},1\)\nonumber\\
   &=&\(\frac{\vartheta^{k+1}-\vartheta^k-f^{k+1}+f^k}{\delta t_k},\frac{1}{T^{k+1}}\)-\(\frac{ T^{k+1}-T^k}{\delta t_k},\frac{s^k}{T^{k+1}}\).
 \end{eqnarray}
 Taking into account the relation $s^k=-\gamma^{k}$, we substitute \eqref{eqDecoupledDiscreteHelmholtzTransportEqnPureGrad} into \eqref{eqDecoupledDiscreteEntropyProduction01} and obtain 
 \begin{eqnarray}\label{eqDecoupledDiscreteEntropyProduction02}
 \frac{ \mathcal{S}^{k+1}-\mathcal{S}^k}{\delta t_k}
 &\geq&\(\frac{\vartheta^{k+1}-\vartheta^k}{\delta t_k},\frac{1}{T^{k+1}}\) +\(\mu^{k+1}\div\(n^{k}\u_\star^{k}\),\frac{1}{T^{k+1}}\)\nonumber\\
 &&+\(\div \(\(\div\(\u_\star^{k} n^{k}\)\)c^{k+1}\grad{n^{k+1}}\),\frac{1}{T^{k+1}}\).
\end{eqnarray}
It can be obtained from \eqref{eqDecoupledDiscreteInternalEnergyEq02} that
 \begin{eqnarray}\label{eqDecoupledDiscreteInternalEnergyProof02}
  \(\frac{\vartheta^{k+1}-\vartheta^k}{\delta t_k},\frac{1}{T^{k+1}}\)&=&-\(\div \q^{k+1},\frac{1}{T^{k+1}}\)\nonumber\\
  &&-\(\div \(\(\div\(\u_\star^{k} n^{k}\)\)c^{k+1}\grad{n^{k+1}}\),\frac{1}{T^{k+1}}\)\nonumber\\
  &&-\( \mu^{k+1}\div(\u_\star^{k} n^{k}),\frac{1}{T^{k+1}}\)\nonumber\\
  &&+\(\lambda^k|\div\u^{k+1}|^2+ \eta^k D\(\u^{k+1}\):\grad\u^{k+1},\frac{1}{T^{k+1}}\)\nonumber\\
  &&+\(\frac{1}{2\delta t_{k}}\rho^{k}\(|\u^{k+1}-\u_\star^{k}|^2+|\u_\star^{k}-\u^{k}|^2\),\frac{1}{T^{k+1}}\).
 \end{eqnarray}
Substituting \eqref{eqDecoupledDiscreteInternalEnergyProof02} into \eqref{eqDecoupledDiscreteEntropyProduction02}, and taking into account 
\begin{eqnarray}\label{eqDecoupledDiscreteInternalEnergyProof03}
   -\(\div \q^{k+1},\frac{1}{T^{k+1}}\)&=& -\int_{\partial\Omega}\frac{\q_{\partial\Omega}^{k+1}\cdot\bm\nu_{\partial\Omega} }{T^{k+1}}d\bm{s}+\left\|\frac{\sqrt{\Theta^{k+1}}}{T^{k+1}} \grad{T^{k+1}}\right\|^2,
 \end{eqnarray}
we obtain   \eqref{eqDecoupledDiscreteSecondLaw}.
\end{proof}

Although the nonlinear coupling relationship between molar density, temperature and velocity has been alleviated to a great extent by a series of semi-implicit treatments, the above time-discrete system  still suffers from weakly nonlinear  coupling. To solve the discrete systems efficiently,  with the help of the auxiliary velocity, we propose the following
fully decoupled, linearized iterative method:
 \begin{eqnarray}\label{eqIterativeDecoupledDiscreteVelocityStar}
\u_\star^{k,l} = \u^k-\frac{\delta t_{k}}{\rho^k} \(n^{k}\grad \mu^{k+1,l+1}+ s^{k}\grad T^{k+1,l}\),
\end{eqnarray}
\begin{eqnarray}\label{eqIterativeDecoupledDiscreteMassConserve1C03}
\frac{ n^{k+1,l+1}-n^k}{\delta t_k}+\div\(n^{k}\u_\star^{k,l}\)=0,
\end{eqnarray}
\begin{eqnarray}\label{eqIterativeDecoupledDiscreteMomentumConserve02}
&& \rho^{k}\(\frac{\u^{k+1,l+1}-\u^k}{\delta t_k}+\u_\star^{k,l}\cdot\grad\u^{k+1,l+1}\)= -n^{k}\grad \mu^{k+1,l+1}+ \gamma^{k}\grad T^{k,l}\nonumber\\
&&+ \div\eta^k D\(\u^{k+1,l+1}\)+\grad\(\lambda^k\div\u^{k+1,l+1}\)  ,
\end{eqnarray}
 \begin{eqnarray}\label{eqIterativeDecoupledDiscreteInternalEnergy02}
 &&\frac{ \vartheta^{k+1,l+1}-\vartheta^k}{\delta t_k}+\div\(\u_\star^{k,l}s^{k}T^{k+1,l}\)\nonumber\\
 &&=\div \Theta^{k} \grad T^{k+1,l+1}-\div \(\(\div\(\u_\star^{k,l} n^{k}\)\)c^{k+1,l}\grad{n^{k+1,l+1}}\) \nonumber\\
 &&- \mu^{k+1,l+1}\div(\u_\star^{k,l} n^{k})- \u_\star^{k,l}\cdot\gamma^{k}\grad T^{k,l}+\eta^k D\(\u^{k+1,l+1}\):\grad\u^{k+1,l+1}\nonumber\\
 &&+\lambda^k|\div\u^{k+1,l+1}|^2+\frac{1}{2\delta t_{k}}\rho^{k}\(|\u^{k+1,l+1}-\u_\star^{k,l}|^2+|\u_\star^{k,l}-\u^{k}|^2\),
 \end{eqnarray} 
 where the superscripts $l$ and $l+1$ denote the $l$th and $(l+1)$th iterations respectively and $ \mu^{k+1,l+1}, \vartheta^{k+1,l+1}$ are defined as
 \begin{eqnarray}\label{eqIterativeDiscreteChPtl}
  \mu^{k+1,l+1}&=&\mu_b^{\textnormal{convex}}(n^{k+1,l},T^{k+1,l})+\frac{\partial\mu_b^{\textnormal{convex}}}{\partial n}(n^{k+1,l},T^{k+1,l})\(n^{k+1,l+1}-n^{k+1,l}\)\nonumber\\
  &&+ \mu_b^{\textnormal{concave}}(n^{k},T^{k+1,l})+ \mu_\grad(n^{k+1,l+1},T^{k+1,l}),
\end{eqnarray}
\begin{eqnarray}\label{eqIterativeDiscreteInterEnergy}
  \vartheta^{k+1,l+1}&=&\vartheta(n^{k+1,l+1},T^{k+1,l})+\frac{\partial\vartheta}{\partial T}(n^{k+1,l+1},T^{k+1,l})\(T^{k+1,l+1}-T^{k+1,l}\).
\end{eqnarray}

For the above iterative method, using the similar techniques in the proof of Theorem \ref{lemDecoupledDiscreteFirstLaw}, we can prove the following theorem.
\begin{thm}\label{lemIterDecoupledIterDiscreteFirstLaw}
The iterative method given by  \eqref{eqIterativeDecoupledDiscreteVelocityStar}-\eqref{eqIterativeDiscreteInterEnergy}  satisfies  the first law of thermodynamics as 
\begin{eqnarray}\label{eqDecoupledIterDiscreteFirstLaw}
   \frac{ \mathcal{E}^{k+1,l+1}-\mathcal{E}^{k}}{\delta t_k} &=& -\int_{\partial\Omega}\q_{\partial\Omega}^{k+1}\cdot\bm\nu_{\partial\Omega} d\bm{s},
 \end{eqnarray}
 where 
 \begin{eqnarray*}\label{eqEnergy02}
\mathcal{E}^{k+1,l+1} =\mathcal{H}^{k+1,l+1}+\mathcal{U}^{k+1,l+1},
\end{eqnarray*}
\begin{eqnarray*}\label{eqEnergy03}
\mathcal{H}^{k+1,l+1}=\frac{1}{2}\int_\Omega\rho^{k+1,l+1}|\u^{k+1,l+1}|^2d\x,~~~\mathcal{U}^{k+1,l+1}=\int_\Omega\vartheta^{k+1,l+1} d\x.
\end{eqnarray*}
\end{thm}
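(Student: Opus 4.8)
The plan is to follow the proof of Theorem~\ref{lemDecoupledDiscreteFirstLaw} essentially line by line, replacing the time-level superscripts $\{k+1,\,k,\,\star\}$ by their iterative counterparts $\{k+1,l+1;\ k;\ \star^{k,l}\}$ and verifying that every cancellation survives. Since the first law is a purely algebraic energy-balance statement, no use of the convex-concave splitting (and hence no appeal to condition \eqref{eqCondGradientEntropy}) is needed; it suffices to account for the work done by the driving forces and for the viscous dissipation. Concretely, I would test the momentum equation \eqref{eqIterativeDecoupledDiscreteMomentumConserve02} against $\u^{k+1,l+1}$ and integrate over $\Omega$, test the auxiliary-velocity identity \eqref{eqIterativeDecoupledDiscreteVelocityStar} against $\u_\star^{k,l}$, and integrate the internal-energy equation \eqref{eqIterativeDecoupledDiscreteInternalEnergy02} over $\Omega$, then sum the three resulting identities.

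First I would rewrite the driving force of \eqref{eqIterativeDecoupledDiscreteMomentumConserve02} using \eqref{eqIterativeDecoupledDiscreteVelocityStar}, which reads $\rho^k\frac{\u_\star^{k,l}-\u^k}{\delta t_k}=-n^k\grad\mu^{k+1,l+1}+\gamma^k\grad T$; substituting this back collapses the discrete acceleration into $\rho^k\frac{\u^{k+1,l+1}-\u_\star^{k,l}}{\delta t_k}+\rho^k\u_\star^{k,l}\cdot\grad\u^{k+1,l+1}$, so that after testing against $\u^{k+1,l+1}$ and integrating the viscous terms by parts (using $\u^{k+1,l+1}=0$ on $\partial\Omega$) only the dissipation norms $\left\|\sqrt{\lambda^k}\div\u^{k+1,l+1}\right\|^2$ and $\tfrac12\left\|\sqrt{\eta^k}D\(\u^{k+1,l+1}\)\right\|^2$ remain, exactly as in \eqref{eqDiscreteMultiComponentMomentumConserveProof01}. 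Invoking the mass balance \eqref{eqIterativeDecoupledDiscreteMassConserve1C03} in the form $\rho^{k+1,l+1}-\rho^k=-\delta t_k\div\(\rho^k\u_\star^{k,l}\)$ together with the polarization identity then converts the inertial terms into the telescoping difference $\frac{\mathcal{H}^{k+1,l+1}-\mathcal{H}_\star^{k,l}}{\delta t_k}$ plus the quadratic remainder $-\frac{1}{2\delta t_k}\(\rho^k,|\u^{k+1,l+1}-\u_\star^{k,l}|^2\)$, mirroring \eqref{eqDiscreteMultiComponentMomentumConserveProof02}--\eqref{eqDecoupledDiscreteMultiComponentMomentumConserveProof03}, where $\mathcal{H}_\star^{k,l}=\tfrac12\int_\Omega\rho^k|\u_\star^{k,l}|^2d\x$.

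Next, testing \eqref{eqIterativeDecoupledDiscreteVelocityStar} against $\u_\star^{k,l}$ yields $\mathcal{H}_\star^{k,l}-\mathcal{H}^k=\delta t_k\big(\(\div\(n^k\u_\star^{k,l}\),\mu^{k+1,l+1}\)-\(s^k\grad T,\u_\star^{k,l}\)\big)-\tfrac12\(\rho^k,|\u_\star^{k,l}-\u^k|^2\)$, the analogue of \eqref{eqDecoupledDiscreteVelocityStarProof01}. Integrating \eqref{eqIterativeDecoupledDiscreteInternalEnergy02} over $\Omega$ and discarding the perfect divergences (the surface-tension flux drops through $\grad n\cdot\bm\nu_{\partial\Omega}=0$ and the convective entropy flux through $\u_\star^{k,l}=0$ on $\partial\Omega$, while the heat flux leaves the boundary integral) gives $\frac{\mathcal{U}^{k+1,l+1}-\mathcal{U}^k}{\delta t_k}$ equal to minus the boundary heat flux, minus $\(\mu^{k+1,l+1},\div\(\u_\star^{k,l}n^k\)\)$, minus $\(\u_\star^{k,l},\gamma^k\grad T\)$, plus the two dissipation norms and the quadratic velocity remainders, as in \eqref{eqDecoupledDiscreteInternalEnergy03}. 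Summing the three identities, the chemical-potential work cancels, the temperature-work terms cancel via $s^k=-\gamma^k$, the viscous norms cancel, the quadratic remainders cancel, and the kinetic energies telescope with $\mathcal{U}^{k+1,l+1}-\mathcal{U}^k$ into $\mathcal{E}^{k+1,l+1}-\mathcal{E}^k$, leaving precisely \eqref{eqDecoupledIterDiscreteFirstLaw}.

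The step I expect to be delicate is the bookkeeping of the temperature-gradient argument across the three identities. The temperature work entering the kinetic-energy balance arrives through the $s^k\grad T^{k+1,l}$ term of the auxiliary velocity \eqref{eqIterativeDecoupledDiscreteVelocityStar}, whereas the one entering the internal-energy balance \eqref{eqIterativeDecoupledDiscreteInternalEnergy02} appears as $\u_\star^{k,l}\cdot\gamma^k\grad T^{k,l}$, and the momentum driving force of \eqref{eqIterativeDecoupledDiscreteMomentumConserve02} carries $\gamma^k\grad T^{k,l}$ as well. For the exact equality \eqref{eqDecoupledIterDiscreteFirstLaw} to hold, both the substitution of the driving force into the momentum equation and the cancellation of the two temperature-work terms require these gradients to be evaluated at a common iterate; I would therefore check (or impose) that the three occurrences of $\grad T$ share one superscript, after which the remaining manipulations are the routine, term-by-term repetition of the proof of Theorem~\ref{lemDecoupledDiscreteFirstLaw}.
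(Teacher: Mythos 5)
Your proof is correct and is exactly the route the paper intends: the paper gives no written proof of this theorem, saying only that it follows ``using the similar techniques in the proof of Theorem~\ref{lemDecoupledDiscreteFirstLaw}'', which is precisely your term-by-term transcription with iterative superscripts (momentum tested against $\u^{k+1,l+1}$, the auxiliary-velocity identity tested against $\u_\star^{k,l}$, the internal-energy equation integrated, and the three summed). Your flagged delicate point is also well taken: the superscripts $\grad T^{k,l}$ appearing in \eqref{eqIterativeDecoupledDiscreteMomentumConserve02} and \eqref{eqIterativeDecoupledDiscreteInternalEnergy02} versus $\grad T^{k+1,l}$ in \eqref{eqIterativeDecoupledDiscreteVelocityStar} are evidently typographical, and reading all three occurrences at the common iterate $T^{k+1,l}$ --- as you impose --- is exactly what makes the driving-force collapse and the $s^k=-\gamma^k$ cancellation exact.
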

Thanks to the  feature that this iterative method satisfies  the first law of thermodynamics, it  converges rapidly in practical applications. 

\section{Numerical results}

In this section, we employ the proposed method to carry out a series of numerical tests. The simulated substance  is n-butane (nC$_4$), and its physical data is listed in Table \ref{tabParametersPREOS}, in which $\vartheta_0$,  $T_0$ and $P_0$ take the values suggested in \cite{smejkal2017phase}.  The correlation coefficients for  the molar heat capacity in \eqref{eqHeatCapacity01} are taken as \cite{smejkal2017phase,Reid1987book} $$\alpha_0=9.487, \alpha_1=3.313\times10^{-1}, \alpha_2=-1.108\times10^{-4},\alpha_3=-2.822\times10^{-9}.$$ 
 The heat conduction coefficient is set to be a constant as $\Theta=0.1$W/m/K. The volumetric  viscosity and    the shear viscosity are taken as $\xi=\eta=10^{-4}$Pa$\cdot$s.  In all  numerical tests,  to initialize the molar density distributions, we use the  following gas and liquid molar densities, denoted by  $$n_G = 358.2996\textnormal{mol/m}^3,~~~~~~n_L = 9058.3724\textnormal{mol/m}^3.$$

We use the rectangular domains,  and  denote  the spatial coordinate $\x=(x,y)\in \mathbb{R}^2$.   The cell-centered finite difference method and the upwind scheme are employed  to discretize  the mass balance equation and energy balance equation, while   the finite volume method  on the staggered mesh \cite{Tryggvason2011book} is used for the momentum balance equation. These spatial discretization schemes  can be equivalent to special mixed finite element methods with quadrature rules \cite{arbogast1997mixed,Girault1996Mac}. The stop criterion of the iterative method for solving the discrete equations  is that the 2-norm of  the relative variation of molar density, velocity and temperature  between the current and previous  iterations  is less than $10^{-3}$, and the maximum nonlinear iterations are also set to be not larger than 10 for preventing too many  loops.  These settings are enough to ensure the convergence of nonlinear iterations in the most cases.

 \begin{table}[htp]
\caption{Physical parameters of nC$_4$}
\begin{center}
\begin{tabular}{ccccccccc}
\hline
$M_w$(g/mol) &  $P_c$(bar)   &  $T_c$(K)   & $\omega$  & $\vartheta_0$(J/mol)  & $T_0$(K)   & $P_0$(bar)  \\
\hline
58.12               &  38.0               & 425.2          &0.199        & -2478.95687512     & 298.15      &   1   \\
\hline
\end{tabular}
\end{center}
\label{tabParametersPREOS}
\end{table}

\subsection{Isolated system}
 In this example,  we consider an ideal isolated system, which exchanges no mass or heat energy with its environment.  The computational   domain is a square as $\Omega=(-L,L)^2$, where $L=10$nm, and   a  uniform  rectangular mesh with $40\times40$ elements is applied. We take a  fixed time step size  $\delta t=3\times10^{-13}$s, and  simulate the dynamics of this system for 500 time steps. The initial temperature of this system is homogeneous and equal to 345K.
The initial molar density is defined by the following function
 $$n = \left\{\begin{array}{cc}n_L, & |x|\leq r~~ \textnormal{and}~~|y|\leq r, \\n_G, & \textnormal{elsewhere},\end{array}\right.$$
 where  $r=0.35L$. Namely,   a square droplet is initially located at the center of the domain.
The discrete  initial molar density is also illustrated in Figure \ref{IsolatedSysnC4MolarDensityOfnC4iT0}.  
The initial  velocity is zero.  In the simulation time, we always set the boundary conditions   $\q\cdot\bm\nu_\Omega=0$ on the  boundary $\partial\Omega$.

In Figures \ref{IsolatedSysnC4MolarDensityOfnC4}, we illustrate the  molar density profiles at different time steps, while the  temperature profiles and velocity fields at different time steps  are depicted  in  Figures \ref{IsolatedSysnC4TemperatureOfnC4} and  Figures \ref{IsolatedSysnC4Velocity} respectively.

It is obviously observed from Figures \ref{IsolatedSysnC4MolarDensityOfnC4} that the droplet changes from a square to a circle due to  the effect of the interfacial tension.  Figures \ref{IsolatedSysnC4TemperatureOfnC4} show that  the region around the droplet  has higher temperatures than the gas region, but  the temperature fields tend towards a homogeneous distribution during the evolution of this system.   The velocity fields in Figures \ref{IsolatedSysnC4Velocity} depict the flow evolutions  with the mass and temperature variations; in particular, we can see that the magnitudes of both velocity components decrease with time steps especially after the 50th time step. From these results, it can be predicted that the system will tend towards a equilibrium state infinitely. 
 
\begin{figure}
            \centering \subfigure[]{
            \begin{minipage}[b]{0.31\textwidth}
               \centering
             \includegraphics[width=0.95\textwidth,height=1.33in]{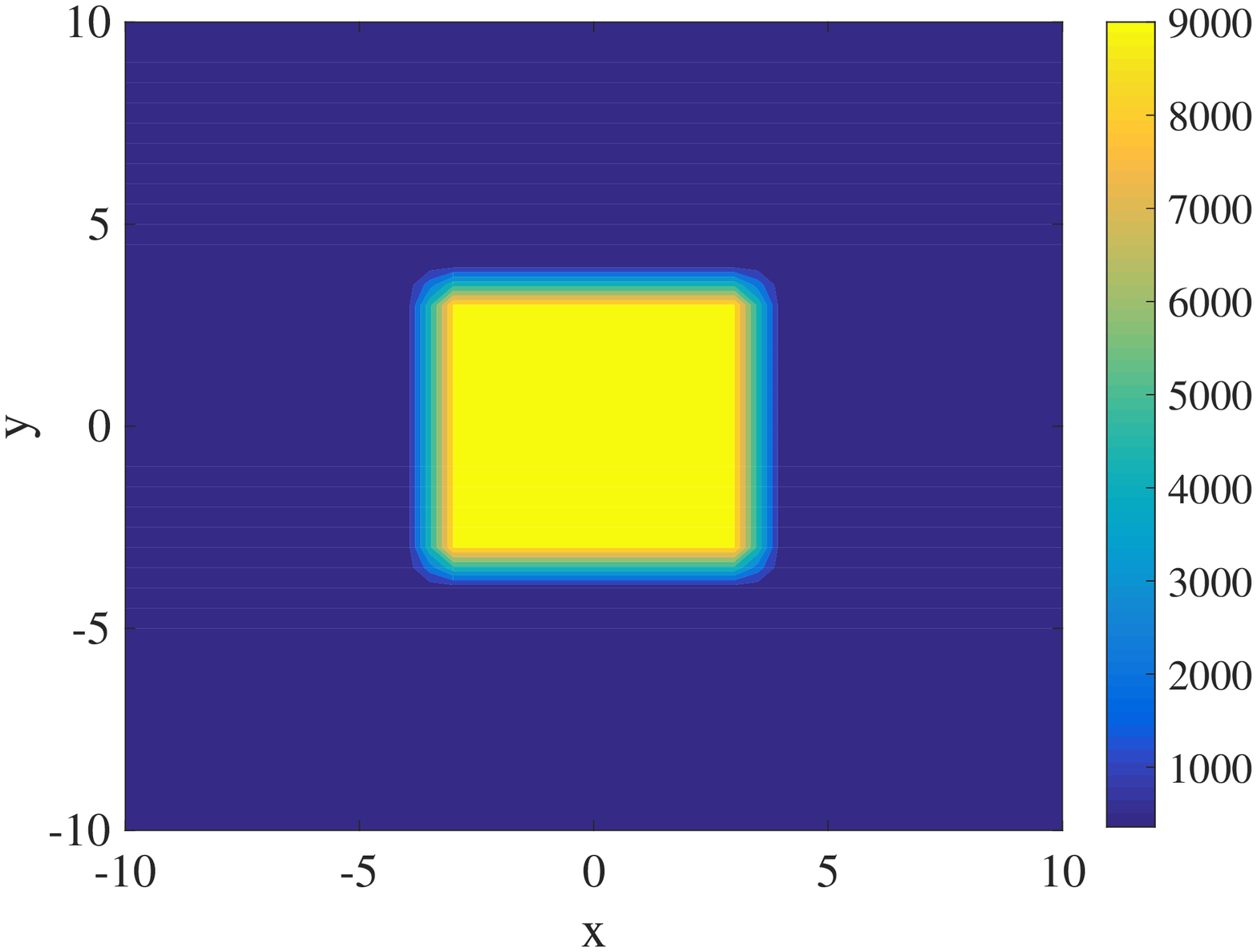}
              \label{IsolatedSysnC4MolarDensityOfnC4iT0}
            \end{minipage}
            }
            \centering \subfigure[]{
            \begin{minipage}[b]{0.31\textwidth}
            \centering
             \includegraphics[width=0.95\textwidth,height=1.33in]{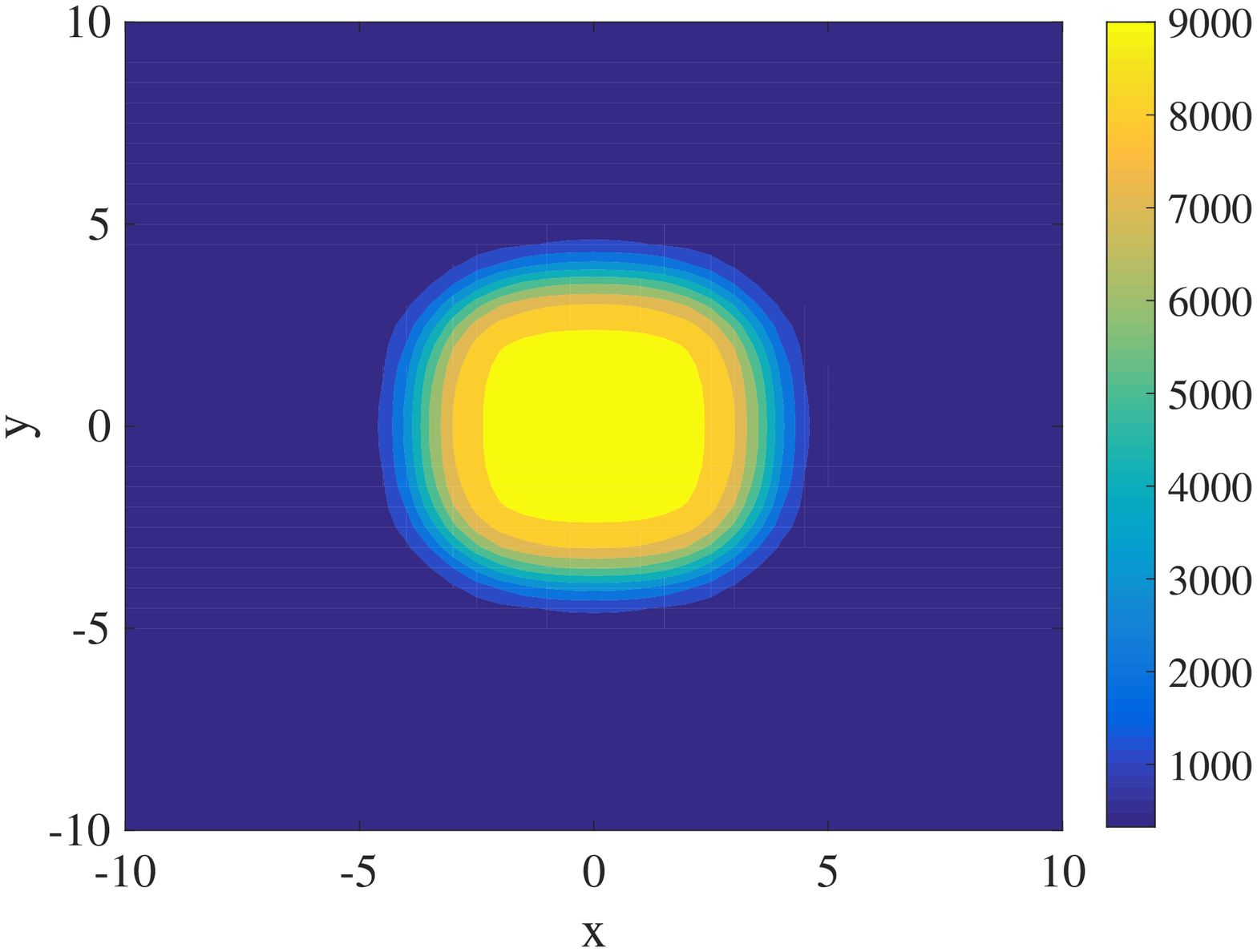}
            \end{minipage}
            }
           \centering \subfigure[]{
            \begin{minipage}[b]{0.3\textwidth}
            \centering
             \includegraphics[width=0.95\textwidth,height=1.33in]{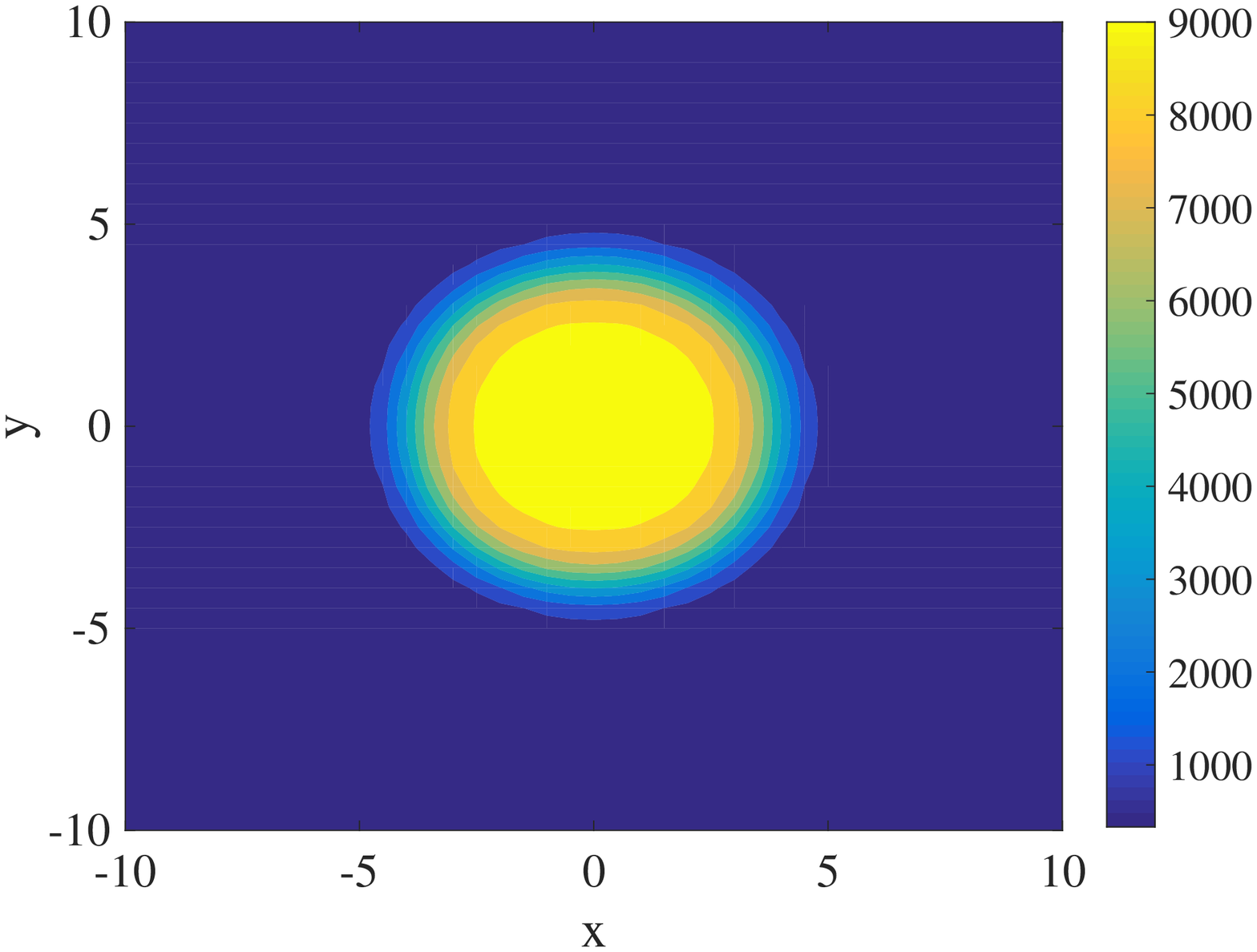}
            \end{minipage}
            }
           \caption{Dynamics of an isolated system: the initial molar density  distribution(a) and the molar density distributions at the 200th(b) and  500th(c) time step respectively.}
            \label{IsolatedSysnC4MolarDensityOfnC4}
 \end{figure}

\begin{figure}
            \centering \subfigure[]{
            \begin{minipage}[b]{0.31\textwidth}
               \centering
             \includegraphics[width=0.95\textwidth,height=1.33in]{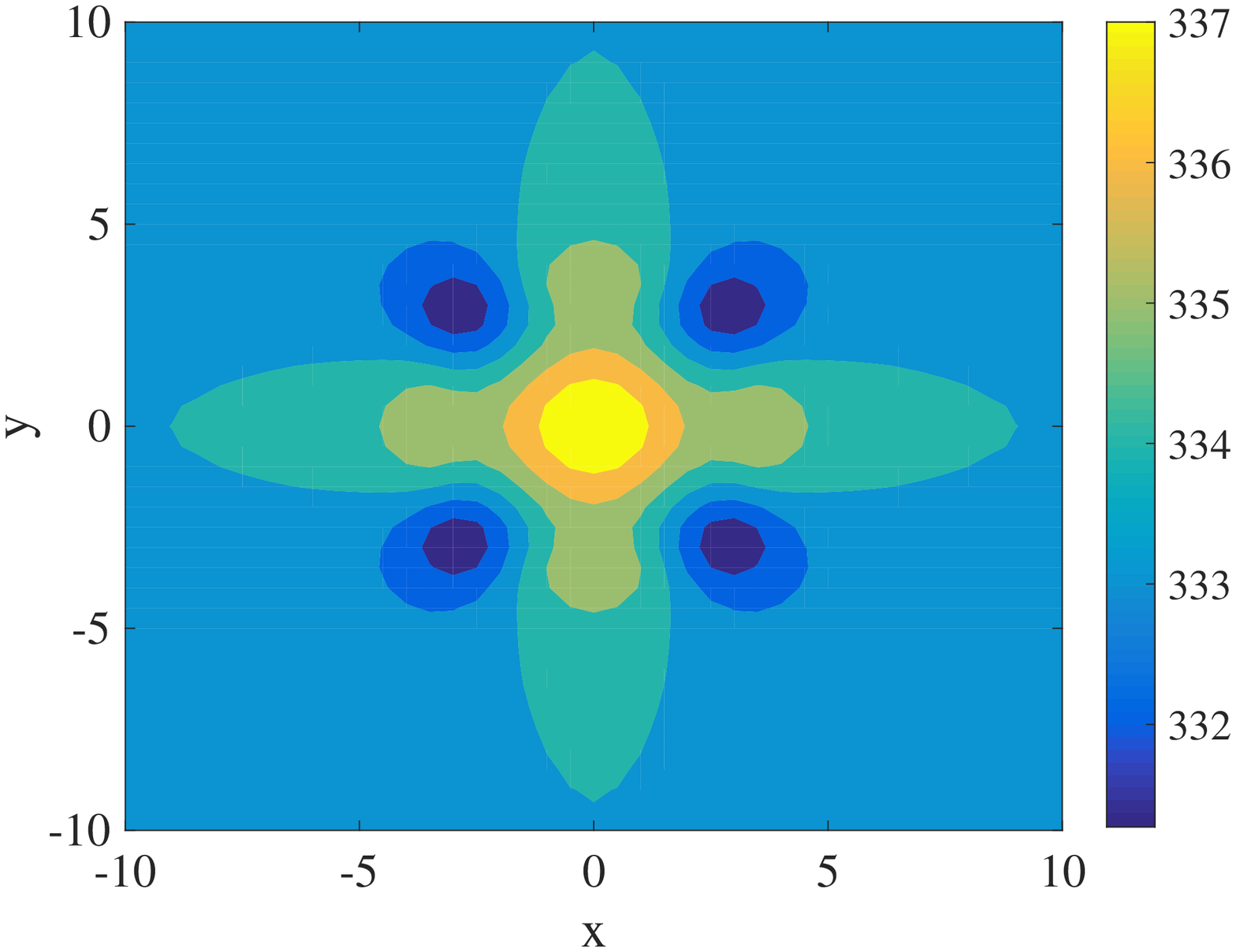}
            \end{minipage}
            }
            \centering \subfigure[]{
            \begin{minipage}[b]{0.31\textwidth}
            \centering
             \includegraphics[width=0.95\textwidth,height=1.33in]{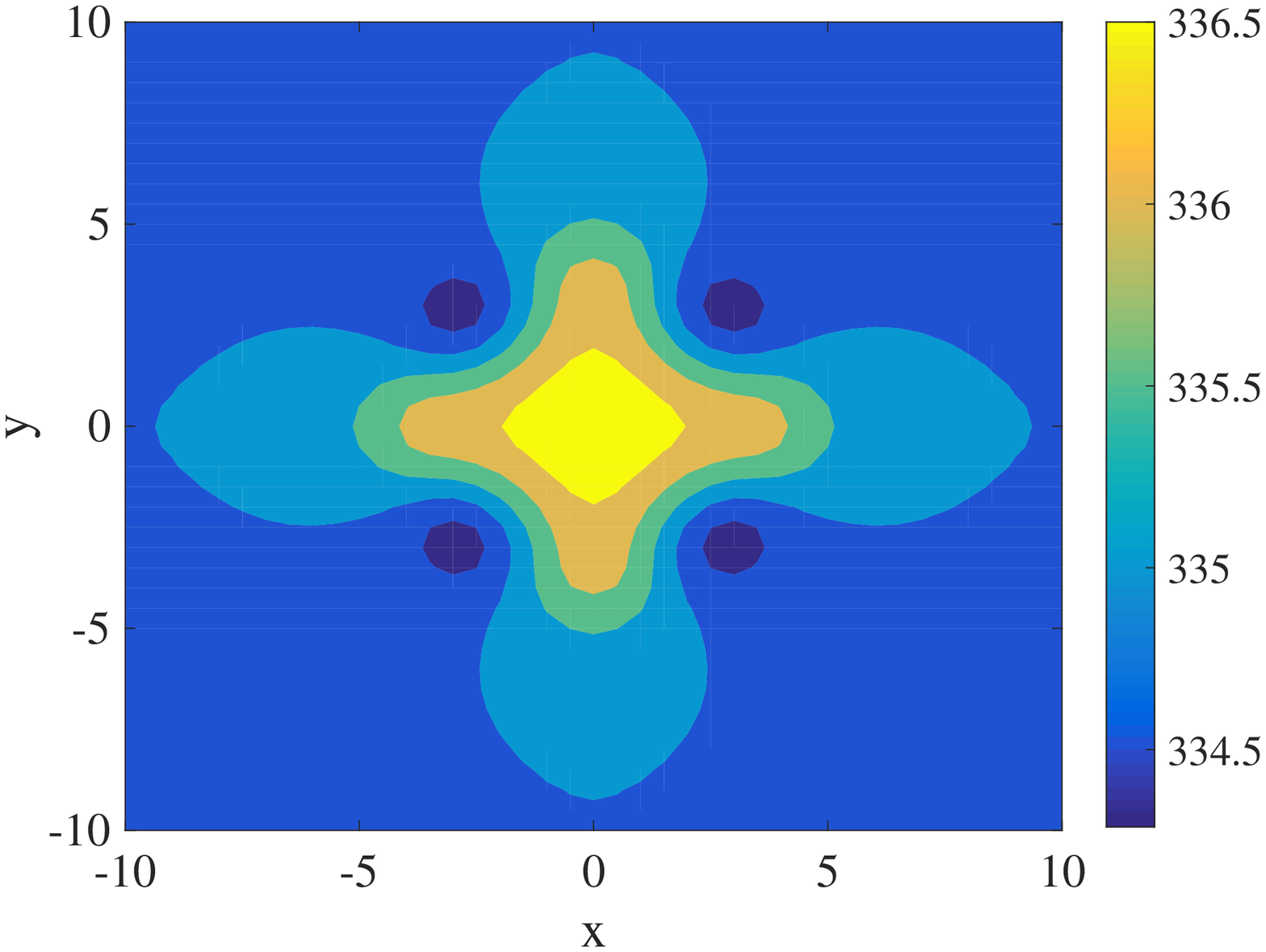}
            \end{minipage}
            }
            \centering \subfigure[]{
            \begin{minipage}[b]{0.3\textwidth}
            \centering
             \includegraphics[width=0.95\textwidth,height=1.3in]{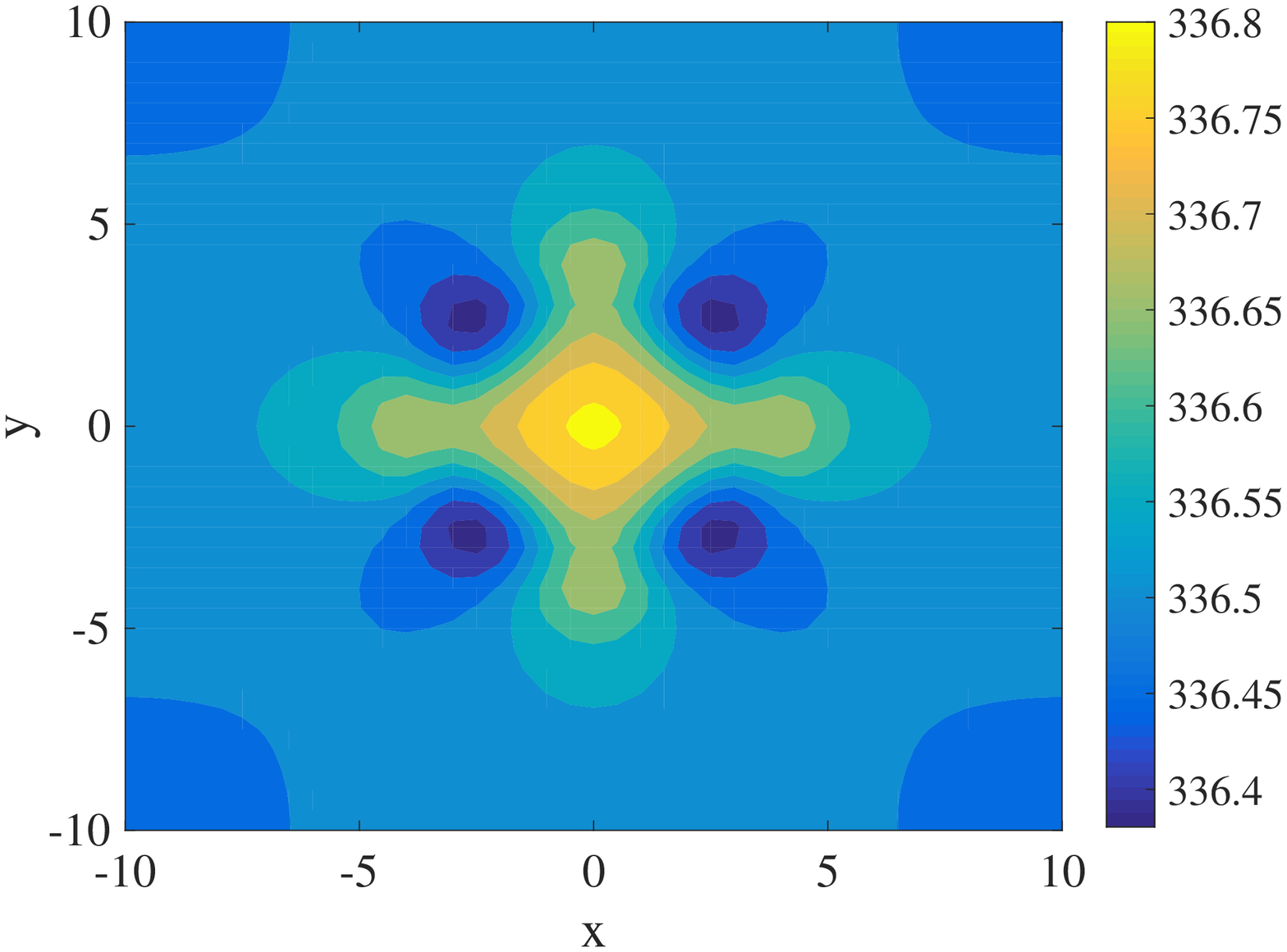}
            \end{minipage}
            }
           \caption{Dynamics of an isolated system:   the temperature profiles at the 50th(a), 200th(b) and  500th(c) time step respectively.}
            \label{IsolatedSysnC4TemperatureOfnC4}
 \end{figure}

\begin{figure}
            \centering \subfigure[]{
            \begin{minipage}[b]{0.31\textwidth}
               \centering
             \includegraphics[width=0.95\textwidth,height=1.33in]{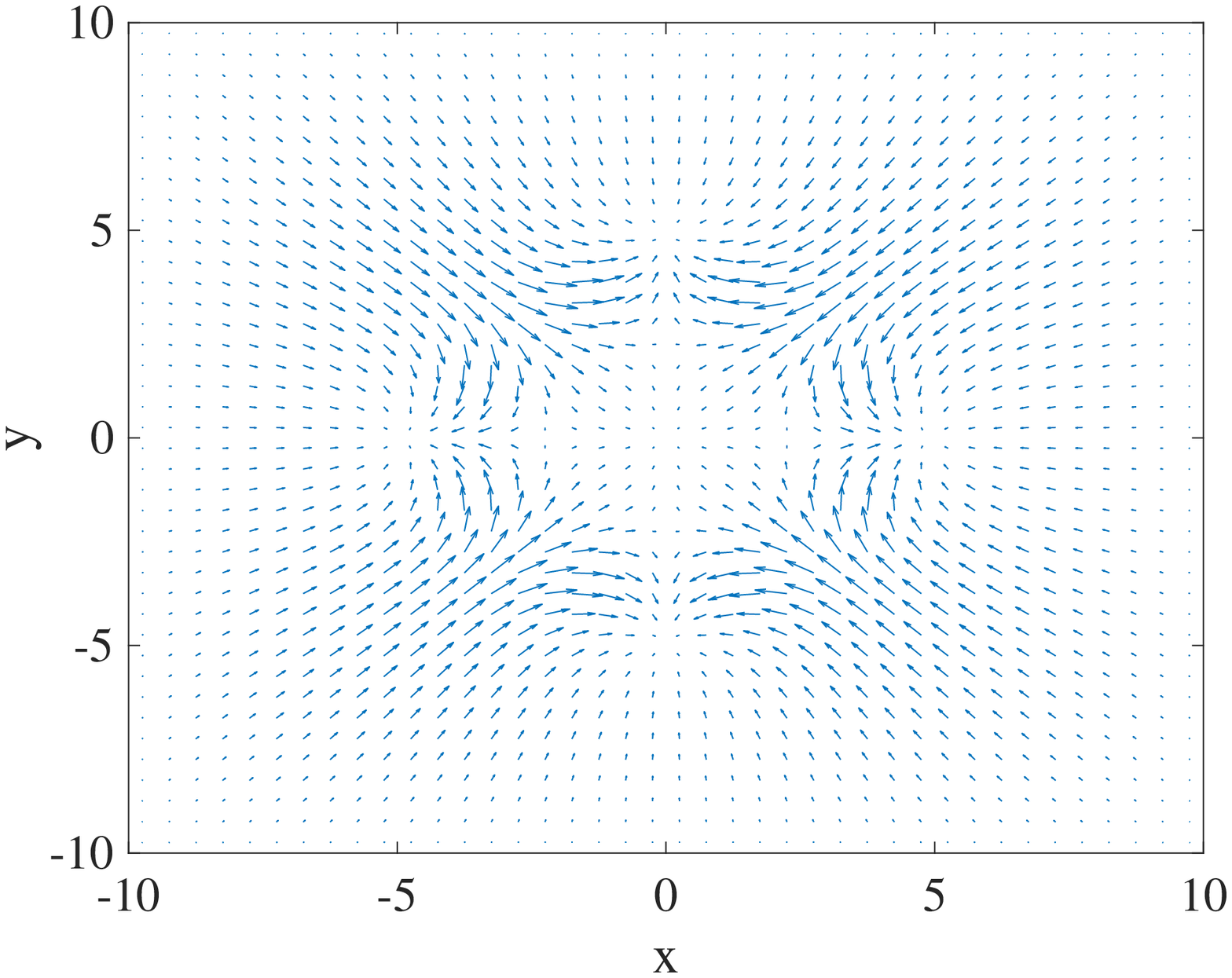}
            \end{minipage}
            }
            \centering \subfigure[]{
            \begin{minipage}[b]{0.31\textwidth}
            \centering
             \includegraphics[width=0.95\textwidth,height=1.33in]{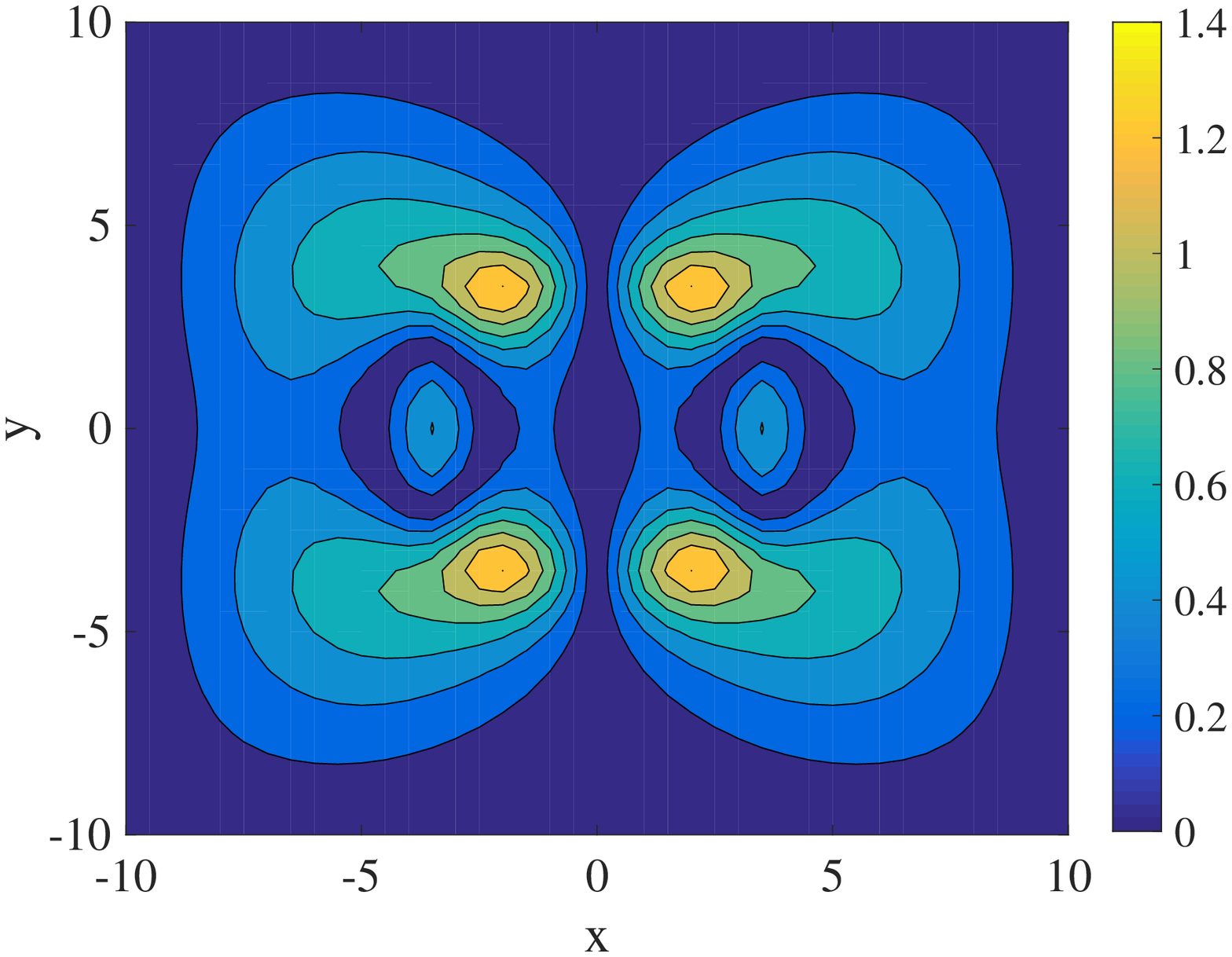}
            \end{minipage}
            }
            \centering \subfigure[]{
            \begin{minipage}[b]{0.3\textwidth}
            \centering
             \includegraphics[width=0.95\textwidth,height=1.3in]{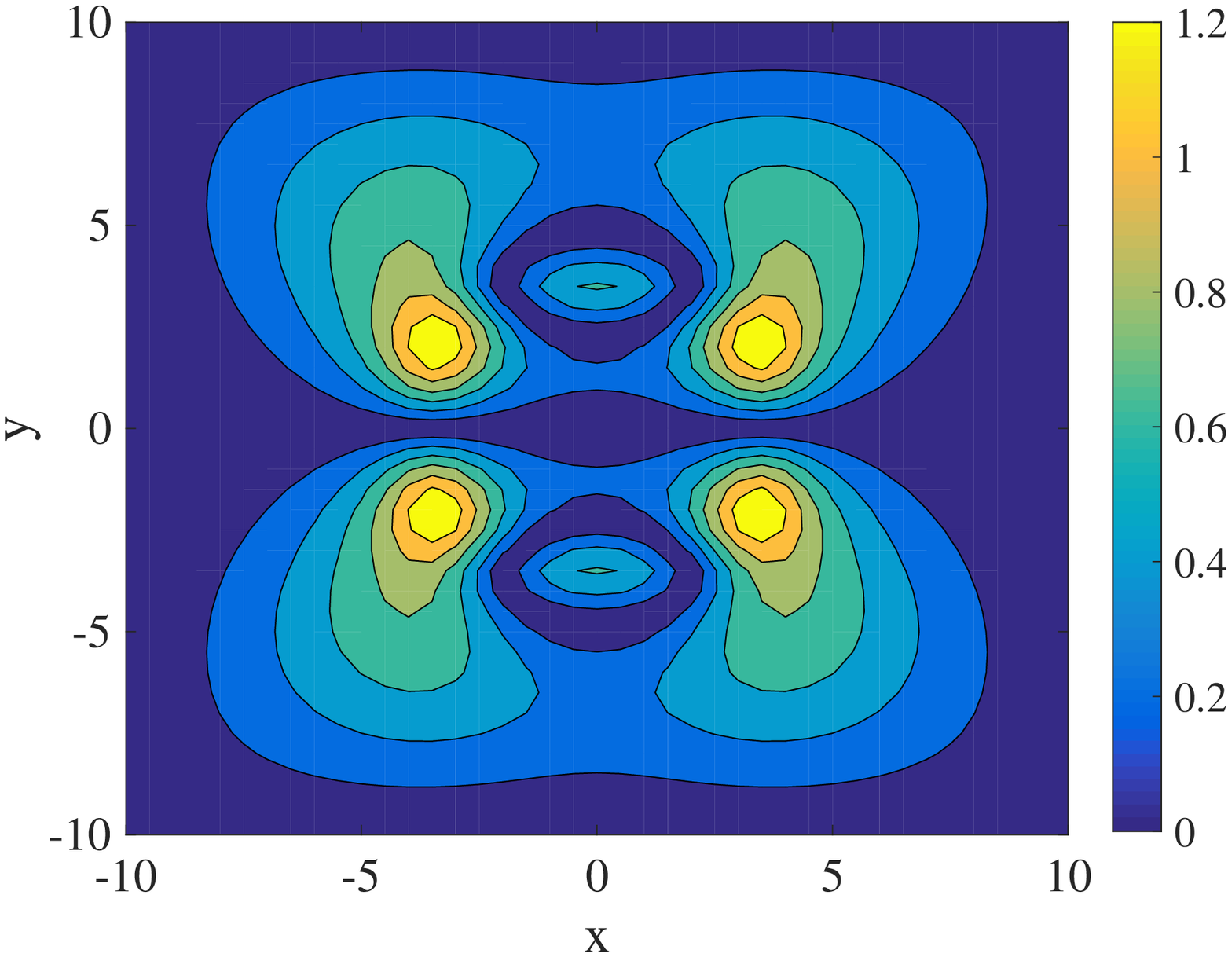}
            \end{minipage}
            }
           \centering \subfigure[]{
            \begin{minipage}[b]{0.31\textwidth}
               \centering
             \includegraphics[width=0.95\textwidth,height=1.33in]{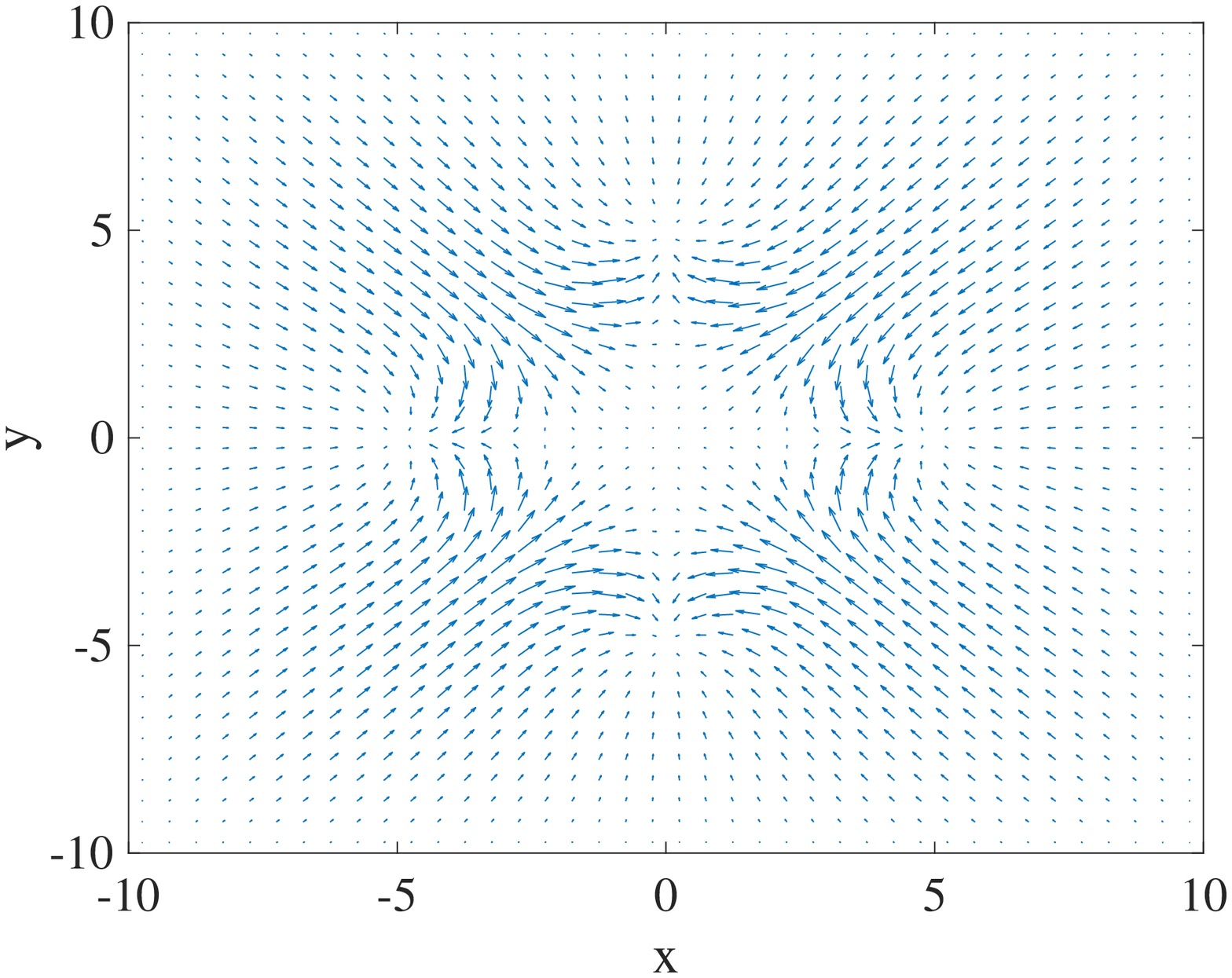}
            \end{minipage}
            }
            \centering \subfigure[]{
            \begin{minipage}[b]{0.31\textwidth}
            \centering
             \includegraphics[width=0.95\textwidth,height=1.33in]{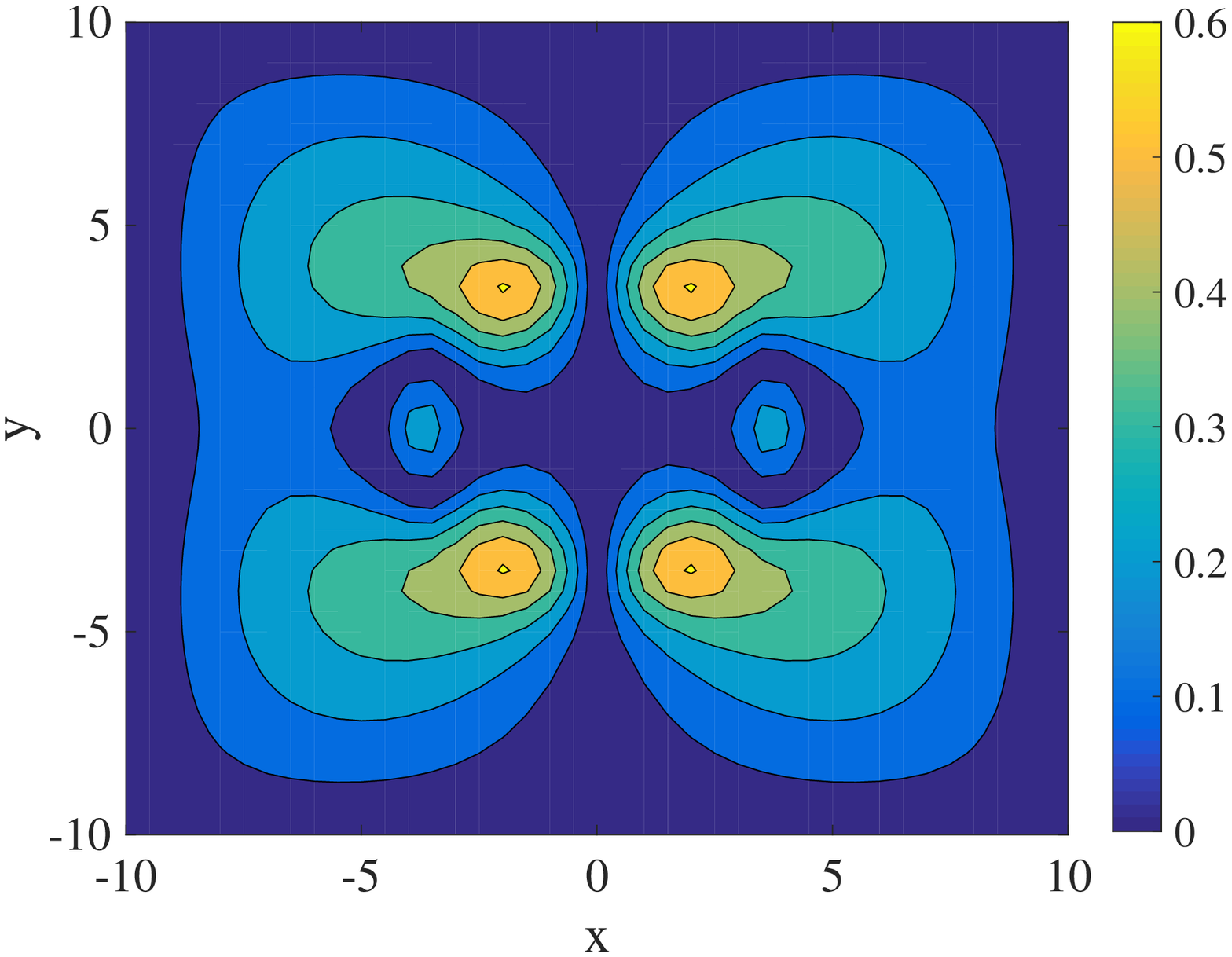}
            \end{minipage}
            }
            \centering \subfigure[]{
            \begin{minipage}[b]{0.3\textwidth}
            \centering
             \includegraphics[width=0.95\textwidth,height=1.3in]{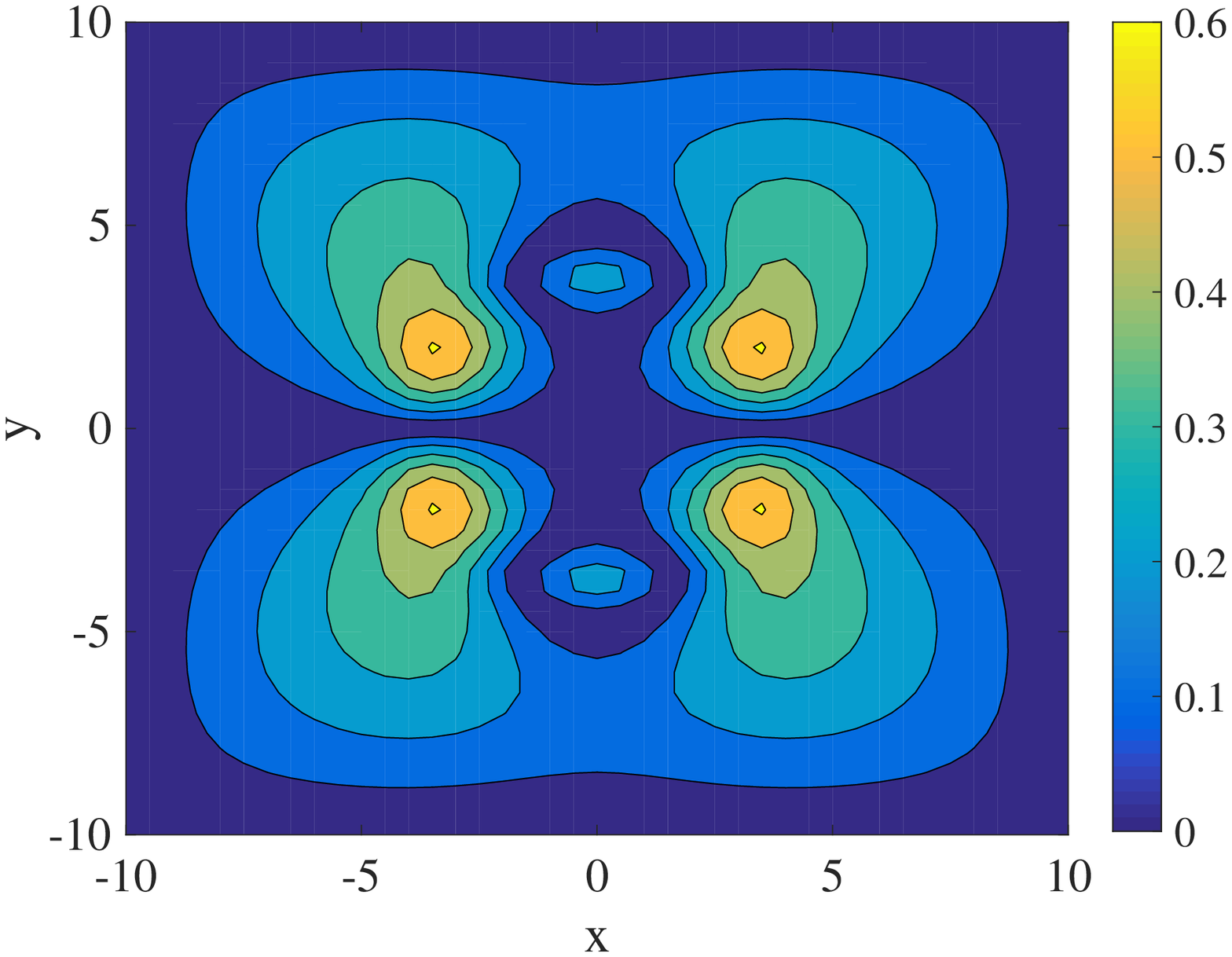}
            \end{minipage}
            }
           \centering \subfigure[]{
            \begin{minipage}[b]{0.31\textwidth}
               \centering
             \includegraphics[width=0.95\textwidth,height=1.33in]{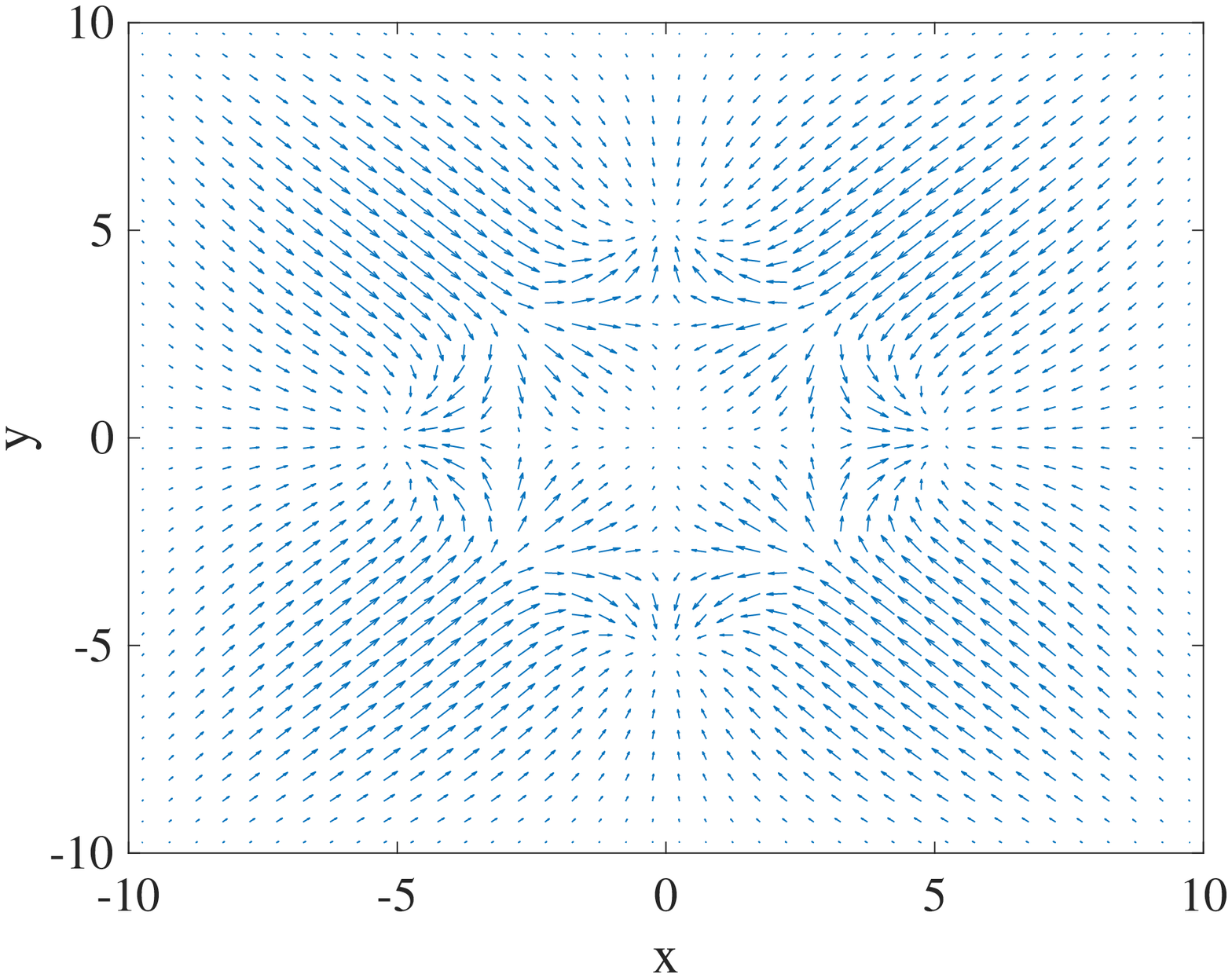}
            \end{minipage}
            }
            \centering \subfigure[]{
            \begin{minipage}[b]{0.31\textwidth}
            \centering
             \includegraphics[width=0.95\textwidth,height=1.33in]{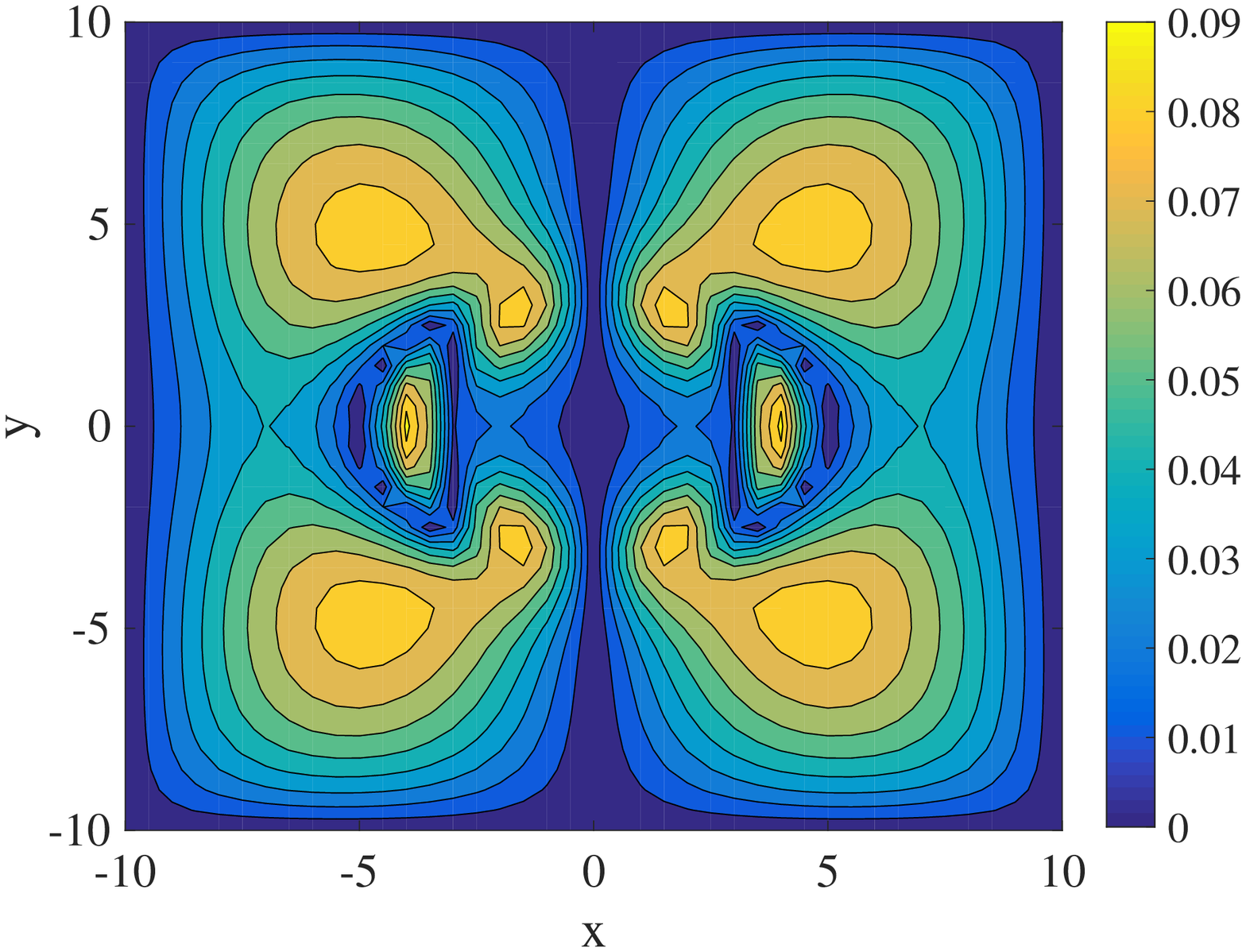}
            \end{minipage}
            }
            \centering \subfigure[]{
            \begin{minipage}[b]{0.3\textwidth}
            \centering
             \includegraphics[width=0.95\textwidth,height=1.3in]{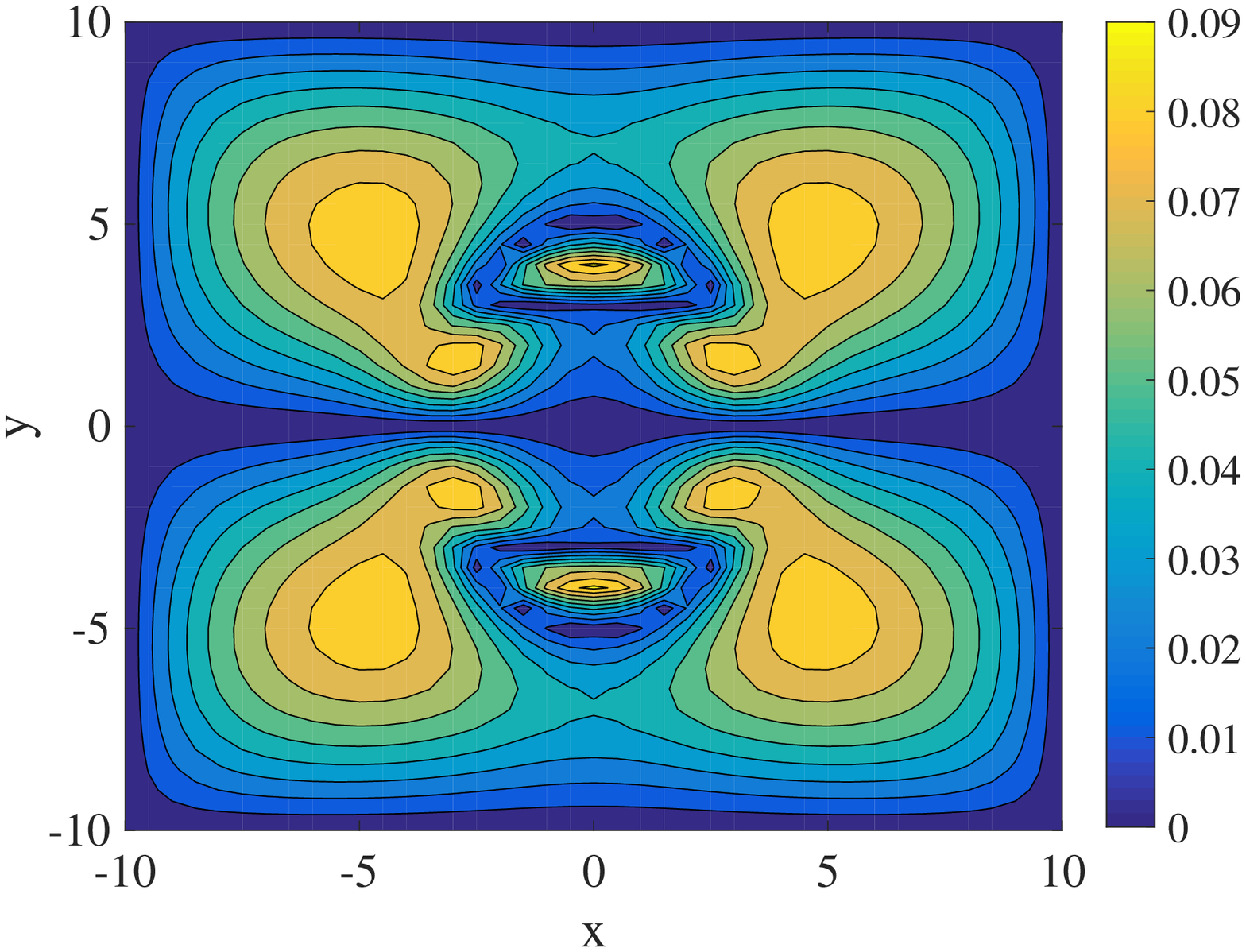}
            \end{minipage}
            }
           \caption{Dynamics of an isolated system:   the flow quivers (left column), magnitude contours of x-direction velocity component (center column), and magnitude contours of y-direction velocity component (right column)   at the 50th(top row), 200th(center row), and 500th(bottom row) time step  respectively.}
            \label{IsolatedSysnC4Velocity}
 \end{figure}

\subsection{Bubble dropping   under a boundary temperature contrast}
 In this example,  we consider  a square  domain $\Omega=(-L,L)^2$, where $L=10$nm, and we use a  uniform  rectangular mesh with $40\times40$ elements.  The initial molar density is distributed by the following function
 $$n = \frac{1}{2}(n_L+n_G) + \frac{1}{2}(n_L-n_G)\tanh(w(d-r)/L),$$
 where $w=10^5$, $r=0.45L$ and   $d=\sqrt{x^2+y^2}$. The discrete distribution of the initial molar density is illustrated in Figure \ref{BubbleDropingnC4MolarDensityOfnC4iT0}.
 
 We partition the domain boundary $\partial\Omega$   into three non-overlapping subdivisions as
 $$\partial\Omega=\Gamma_n\cup\Gamma_t\cup\Gamma_b,$$
 where 
$$\Gamma_n=\left\{\partial\Omega\cap\{\x\in \mathbb{R}^2|x=-L\}\right\}\cup\left\{\partial\Omega\cap\{\x\in \mathbb{R}^2|x=L\}\right\},$$
$$\Gamma_t=\partial\Omega\cap\left\{\x\in \mathbb{R}^2|y=L\right\},$$
$$\Gamma_b=\partial\Omega\cap\left\{\x\in \mathbb{R}^2|y=-L\right\}.$$
The initial temperature is uniformly equal to 345K inside $\Omega$, 
and the boundary  conditions are imposed through the simulation time, 
$$T=345\textnormal{K}~~\textnormal{on} ~~\Gamma_t,~~~T=348\textnormal{K}~~ \textnormal{on} ~~\Gamma_b,~~~\q\cdot\bm\nu_\Omega=0~~ \textnormal{on} ~~\Gamma_n.$$
Namely,  there exists a temperature contrast between the top and bottom of this domain.
The initial  velocity is  fixed to be zero.

We take a  fixed time step size $\delta t=5\times10^{-13}$s, and use 50000 time steps to simulate this problem.

In Figures \ref{BubbleDropingnC4MolarDensityOfnC4}, we illustrate the molar density profiles at various time steps. In Figures \ref{BubbleDroppingnC4TemperatureOfnC4}, the temperature profiles are depicted at various time steps. In Figures \ref{BubbleDropingnC4Velocity}, we illustrate    the velocity fields, especially  magnitudes of both velocity components at different time steps.  

From Figures \ref{BubbleDropingnC4MolarDensityOfnC4}, we can see that the initial bubble with a rough shape becomes a smooth circle, and it is gradually dropping towards the bottom. Figures \ref{BubbleDroppingnC4TemperatureOfnC4} show that there exists a temperature contrast between the top and bottom of this domain, although the temperature fields vary due to the bubble motion. Figures \ref{BubbleDropingnC4Velocity} depict  that the velocity fields are generated by the temperature contrast and the fluid flows towards the bottom. Finally, the system will reach  a steady state; i.e., the bubble spreads into a semicircle  on the heated bottom,  the velocity field vanishes and the temperature has the layered distribution along the y-direction.

\begin{figure}
            \centering \subfigure[]{
            \begin{minipage}[b]{0.31\textwidth}
               \centering
             \includegraphics[width=0.95\textwidth,height=1.35in]{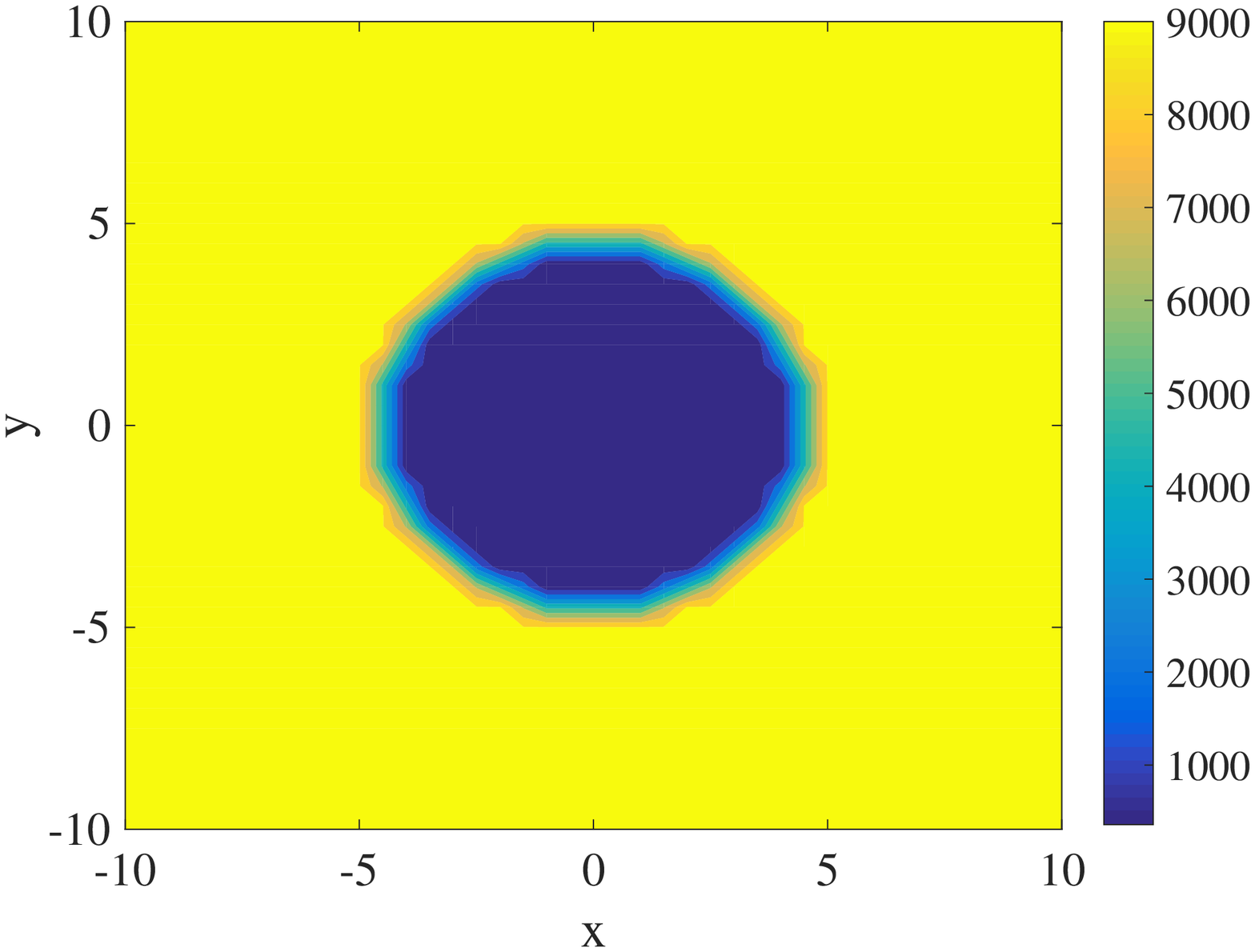}
              \label{BubbleDropingnC4MolarDensityOfnC4iT0}
            \end{minipage}
            }
            \centering \subfigure[]{
            \begin{minipage}[b]{0.31\textwidth}
            \centering
             \includegraphics[width=0.95\textwidth,height=1.35in]{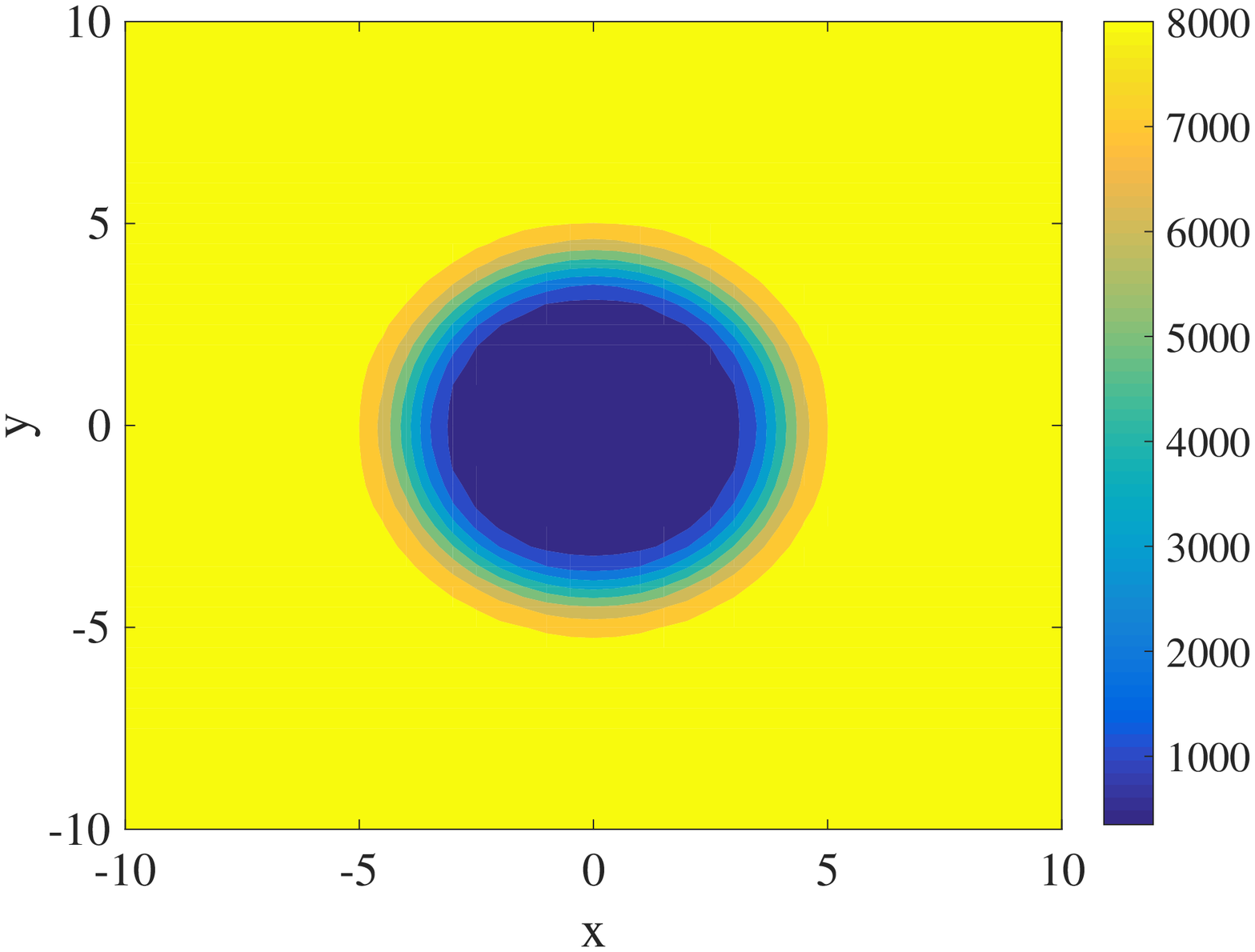}
            \end{minipage}
            }
             \centering \subfigure[]{
            \begin{minipage}[b]{0.31\textwidth}
            \centering
             \includegraphics[width=0.95\textwidth,height=1.35in]{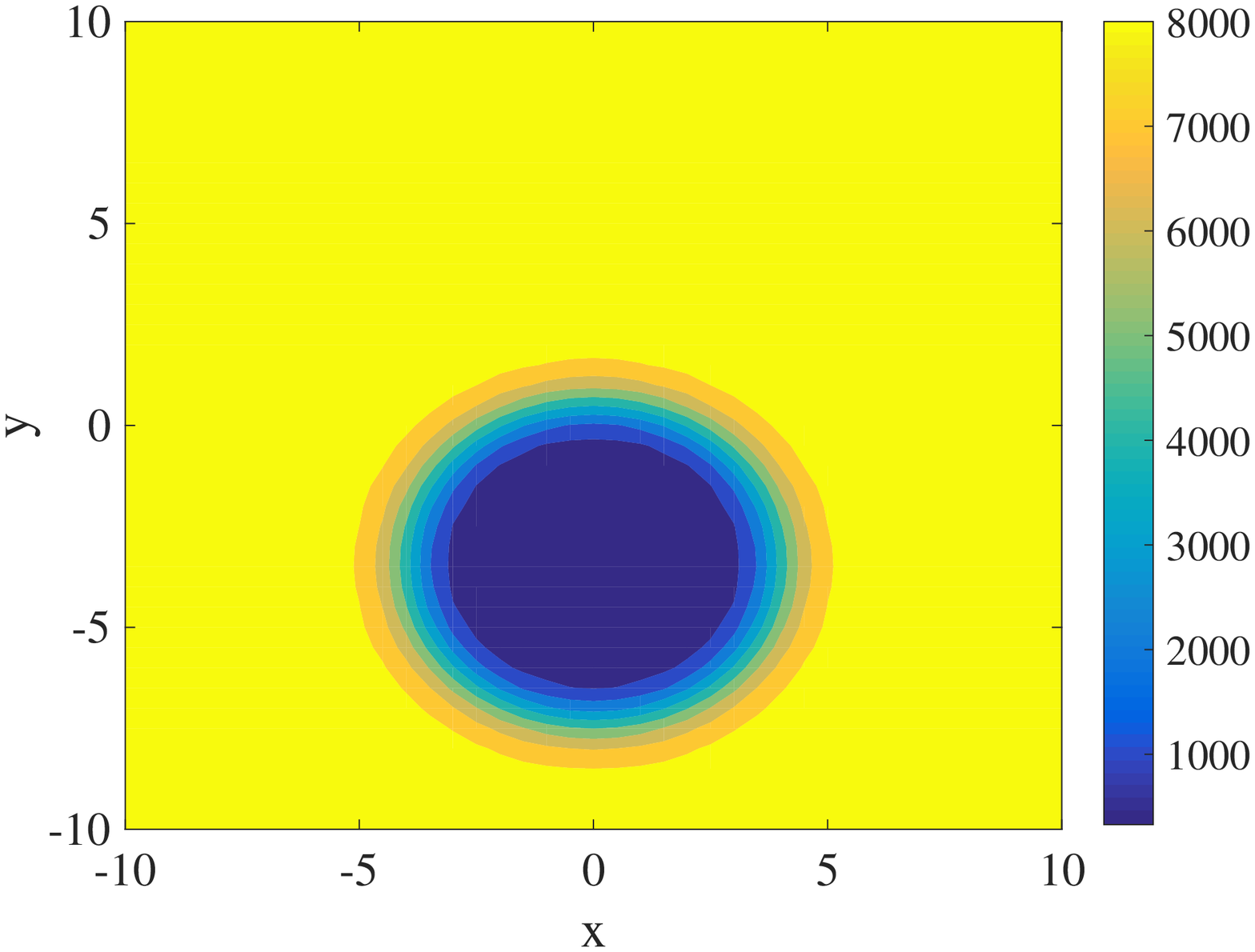}
            \end{minipage}
            }
           \centering \subfigure[]{
            \begin{minipage}[b]{0.31\textwidth}
               \centering
             \includegraphics[width=0.95\textwidth,height=1.35in]{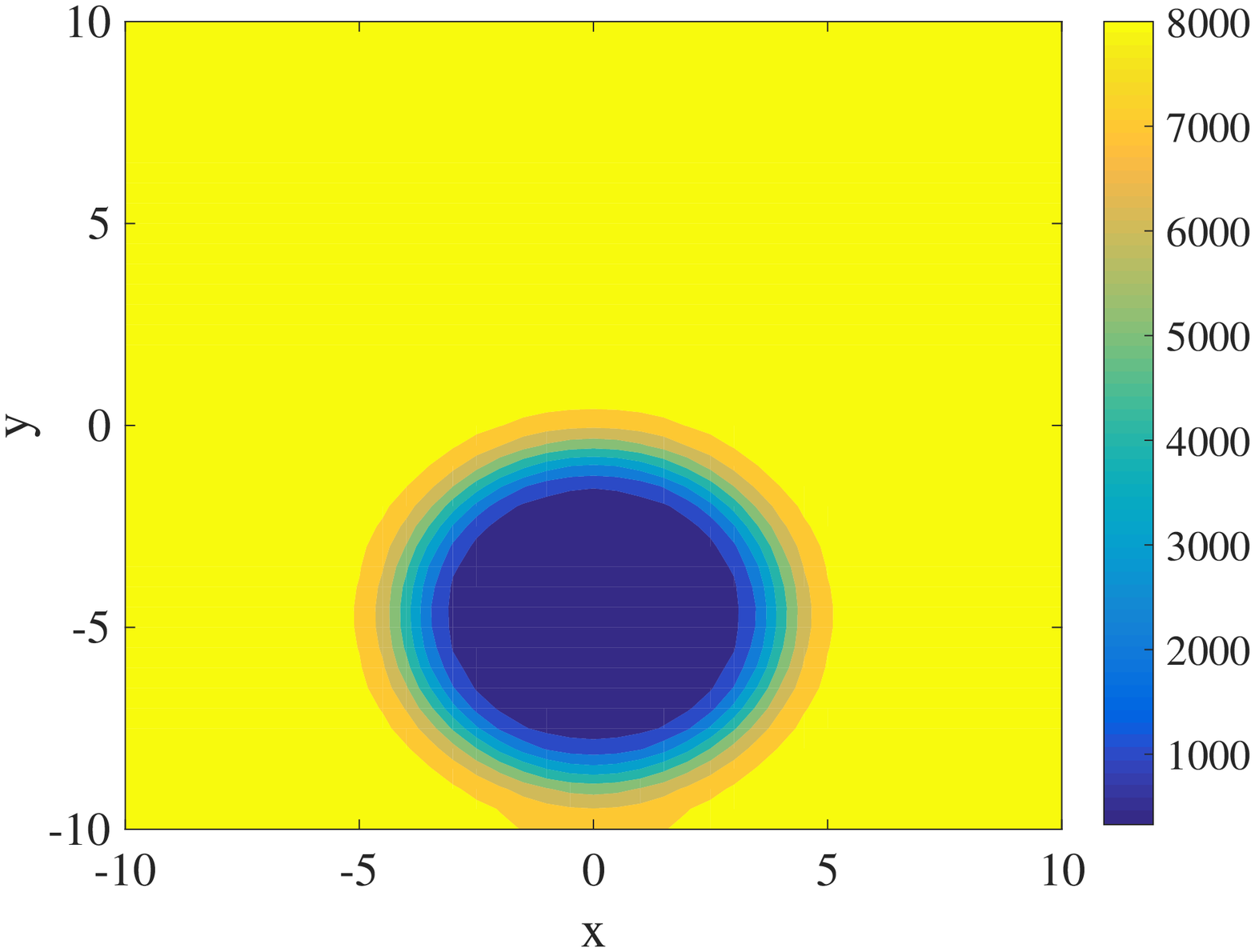}
            \end{minipage}
            }
            \centering \subfigure[]{
            \begin{minipage}[b]{0.3\textwidth}
            \centering
             \includegraphics[width=0.95\textwidth,height=1.35in]{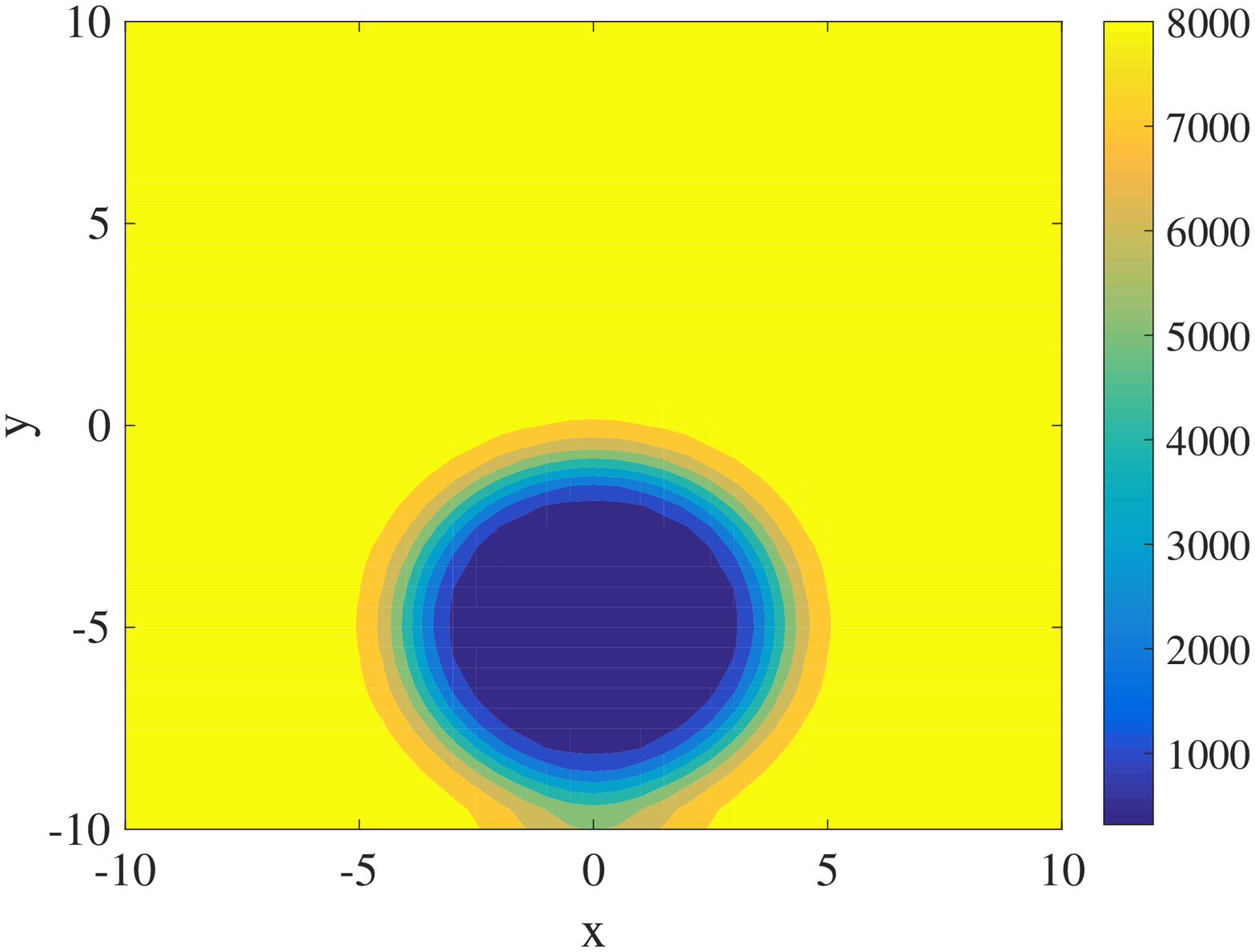}
            \end{minipage}
            }
             \centering \subfigure[]{
            \begin{minipage}[b]{0.3\textwidth}
            \centering
             \includegraphics[width=0.95\textwidth,height=1.35in]{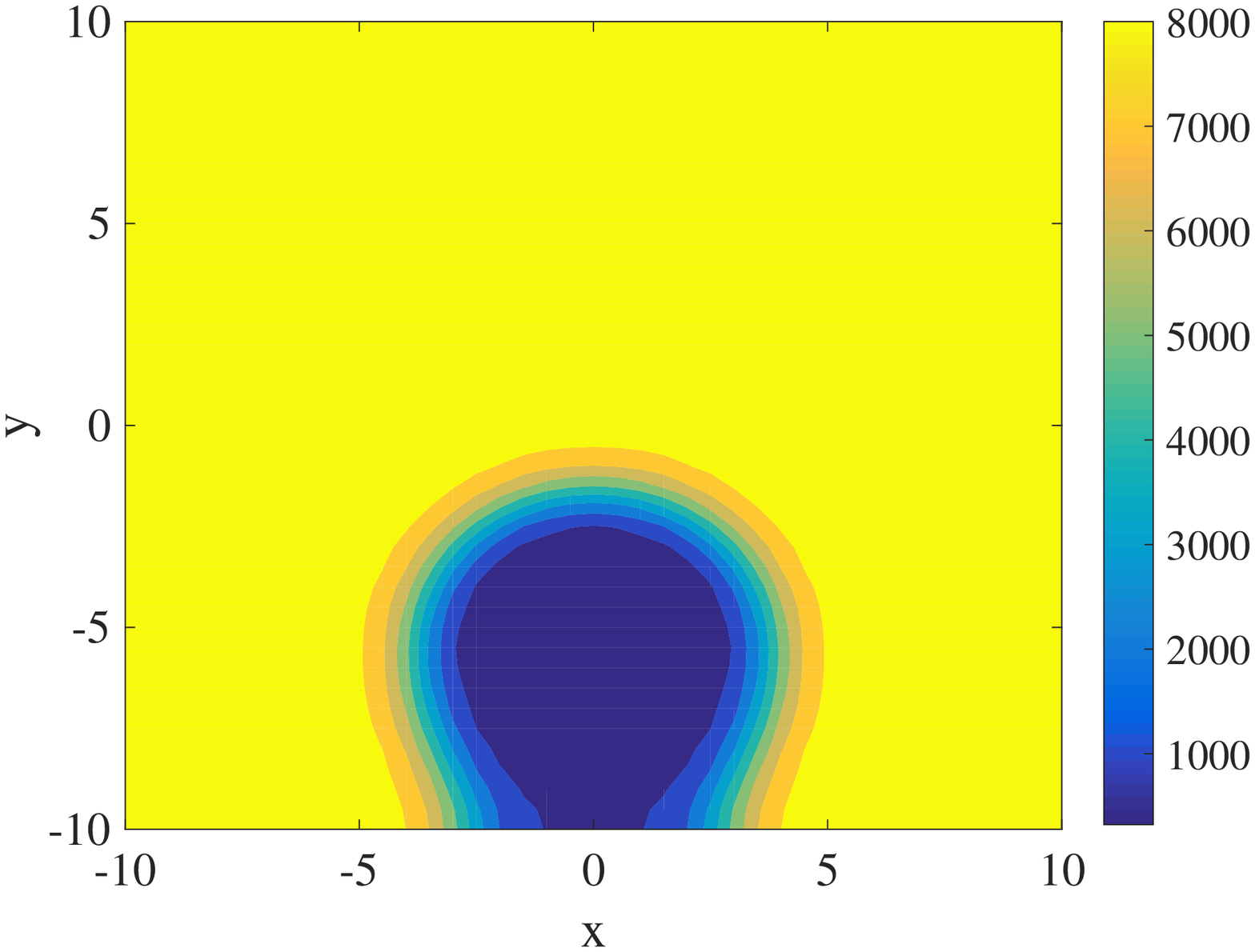}
            \end{minipage}
            }
           \centering \subfigure[]{
            \begin{minipage}[b]{0.3\textwidth}
               \centering
             \includegraphics[width=0.95\textwidth,height=1.35in]{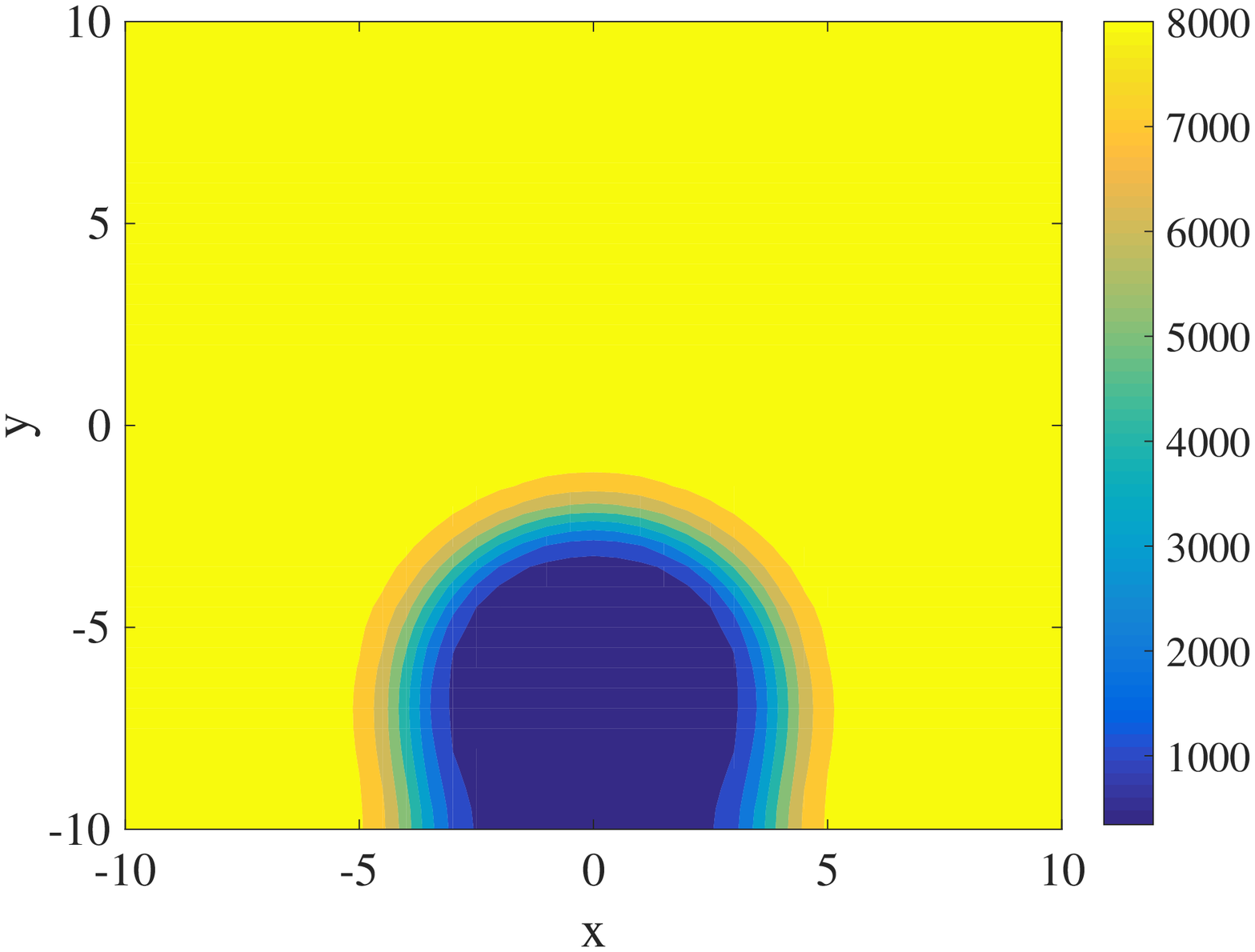}
            \end{minipage}
            }
            \centering \subfigure[]{
            \begin{minipage}[b]{0.3\textwidth}
            \centering
             \includegraphics[width=0.95\textwidth,height=1.35in]{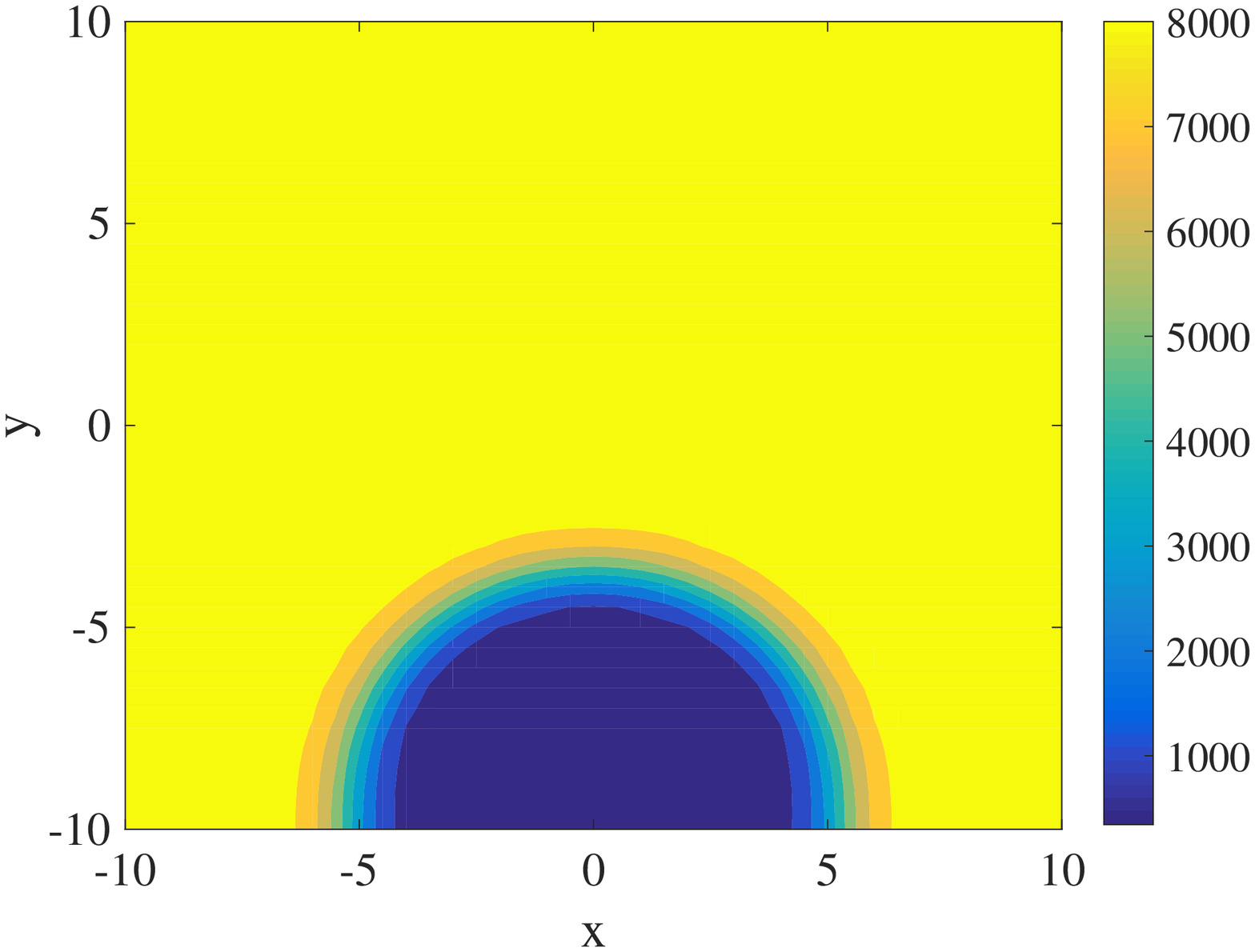}
            \end{minipage}
            }
             \centering \subfigure[]{
            \begin{minipage}[b]{0.3\textwidth}
            \centering
             \includegraphics[width=0.95\textwidth,height=1.35in]{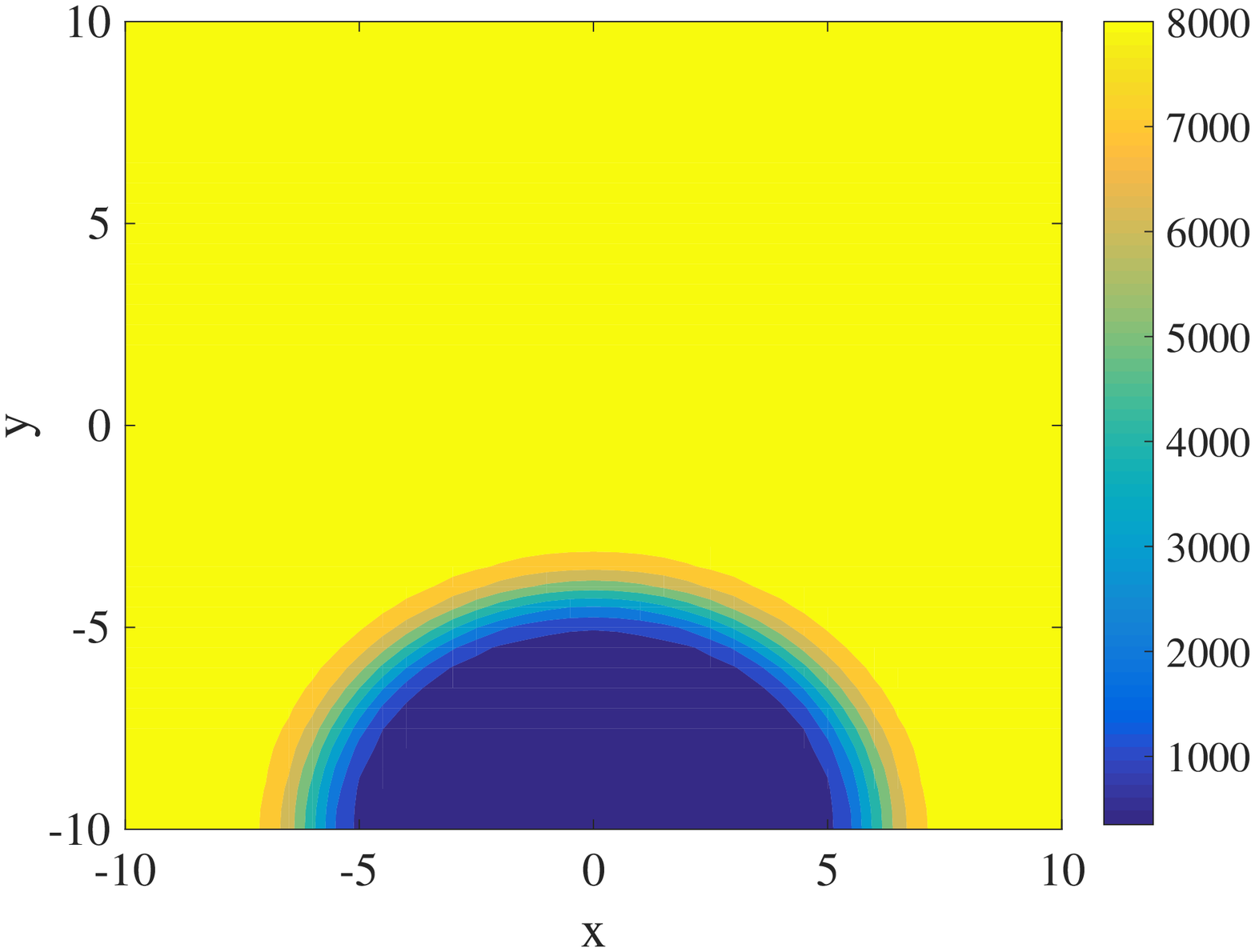}
            \end{minipage}
            }
           \caption{Bubble dropping problem: the molar density profiles at the initial time(a), 1000th(b), 20000th(c), 28000th(d), 29000th(e), 30000th(f), 30500th(g), 32000th(h) and 50000th(i) time step respectively.}
            \label{BubbleDropingnC4MolarDensityOfnC4}
 \end{figure}

\begin{figure}
            \centering \subfigure[]{
            \begin{minipage}[b]{0.31\textwidth}
               \centering
             \includegraphics[width=0.95\textwidth,height=1.35in]{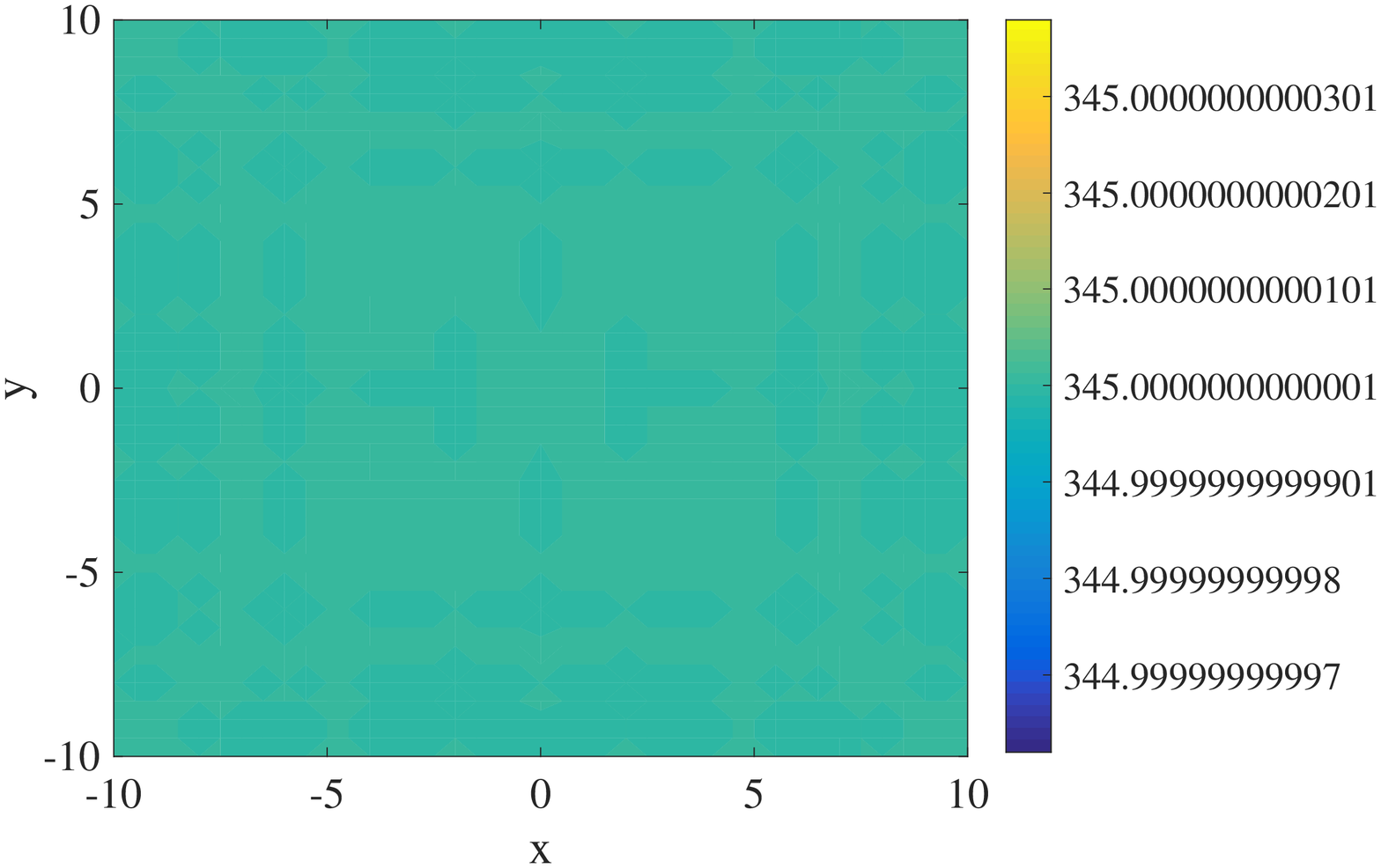}
            \end{minipage}
            }
            \centering \subfigure[]{
            \begin{minipage}[b]{0.31\textwidth}
            \centering
             \includegraphics[width=0.95\textwidth,height=1.35in]{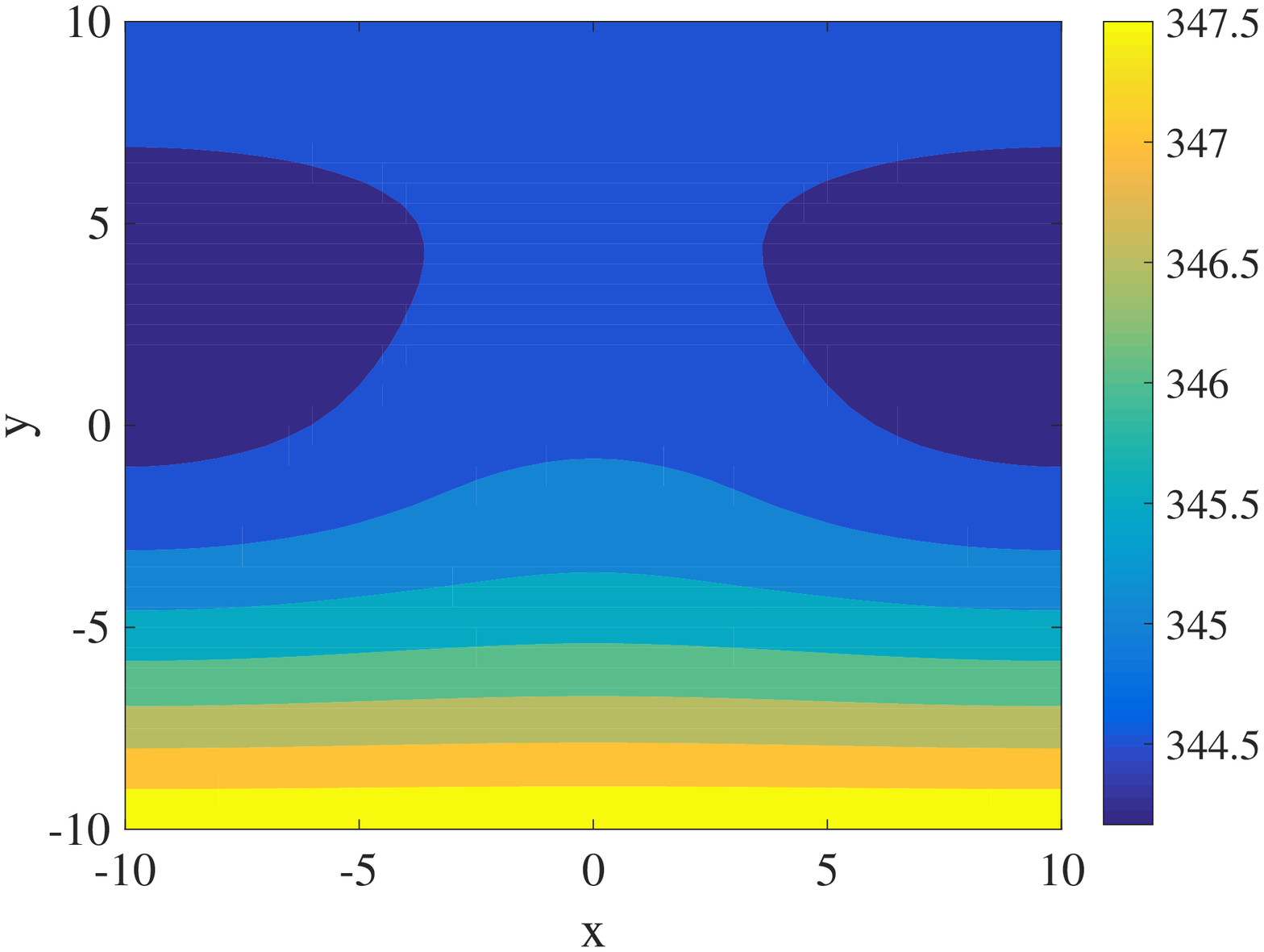}
            \end{minipage}
            }
             \centering \subfigure[]{
            \begin{minipage}[b]{0.31\textwidth}
            \centering
             \includegraphics[width=0.95\textwidth,height=1.35in]{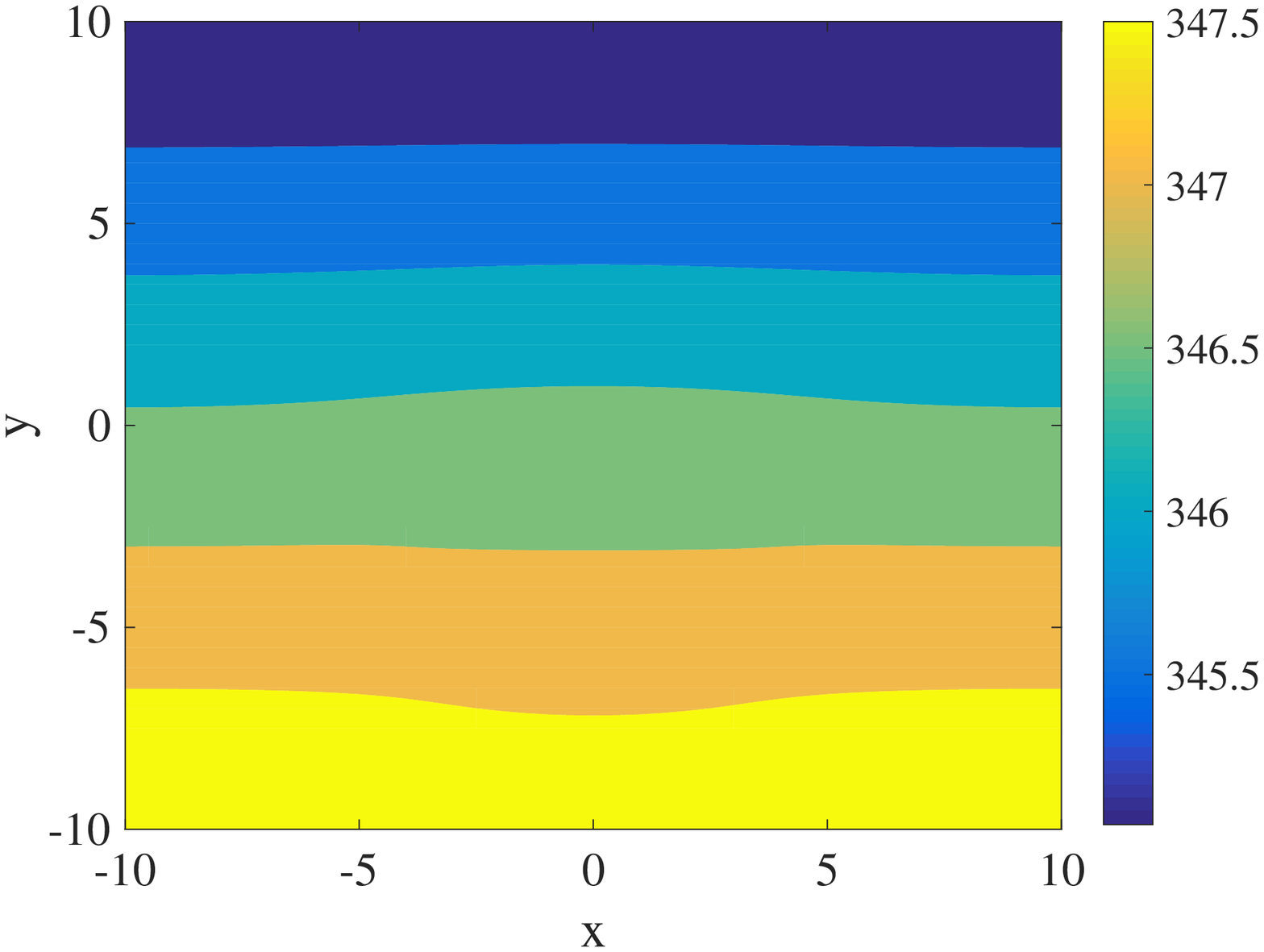}
            \end{minipage}
            }
           \centering \subfigure[]{
            \begin{minipage}[b]{0.31\textwidth}
               \centering
             \includegraphics[width=0.95\textwidth,height=1.35in]{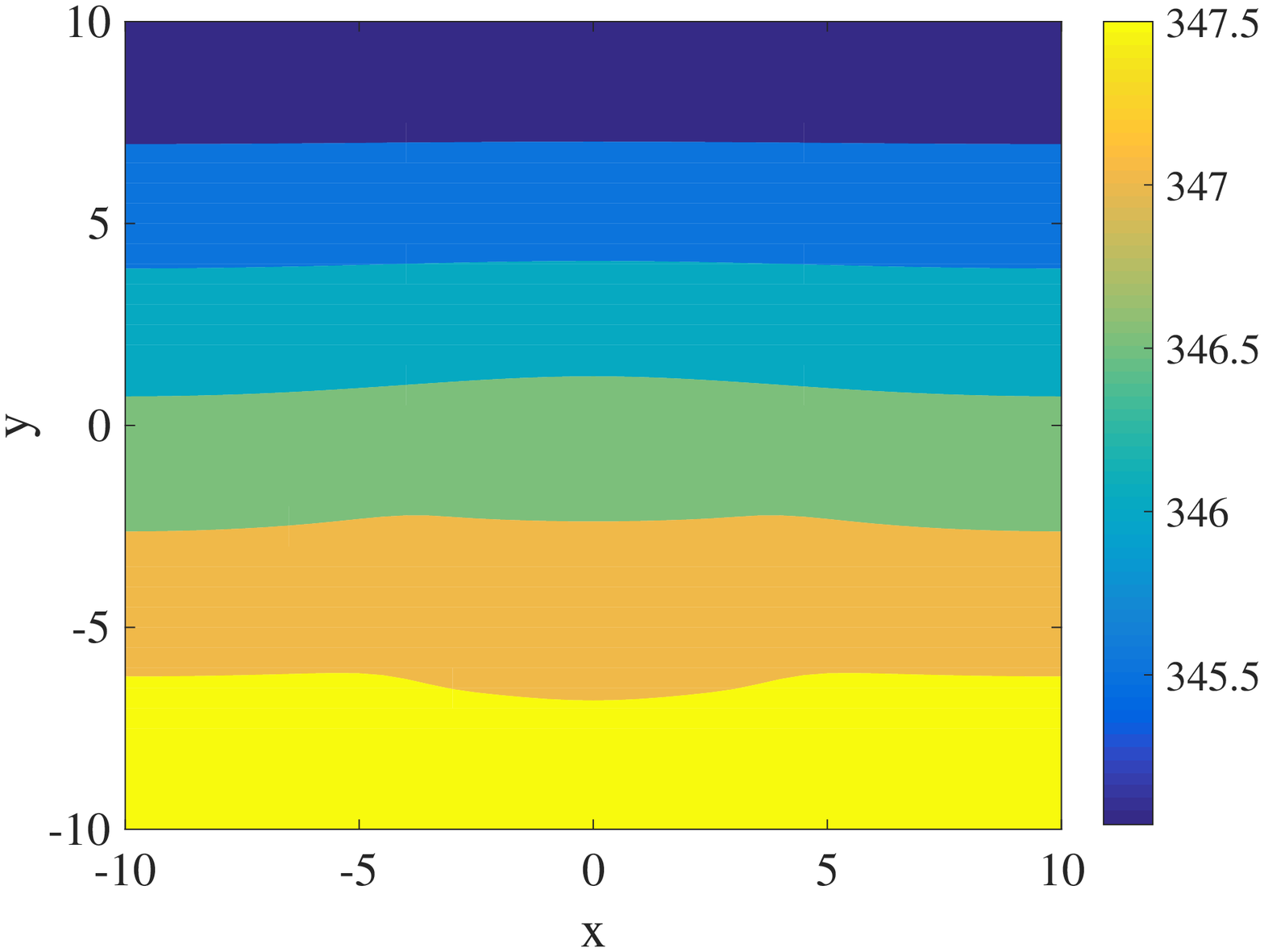}
            \end{minipage}
            }
            \centering \subfigure[]{
            \begin{minipage}[b]{0.3\textwidth}
            \centering
             \includegraphics[width=0.95\textwidth,height=1.35in]{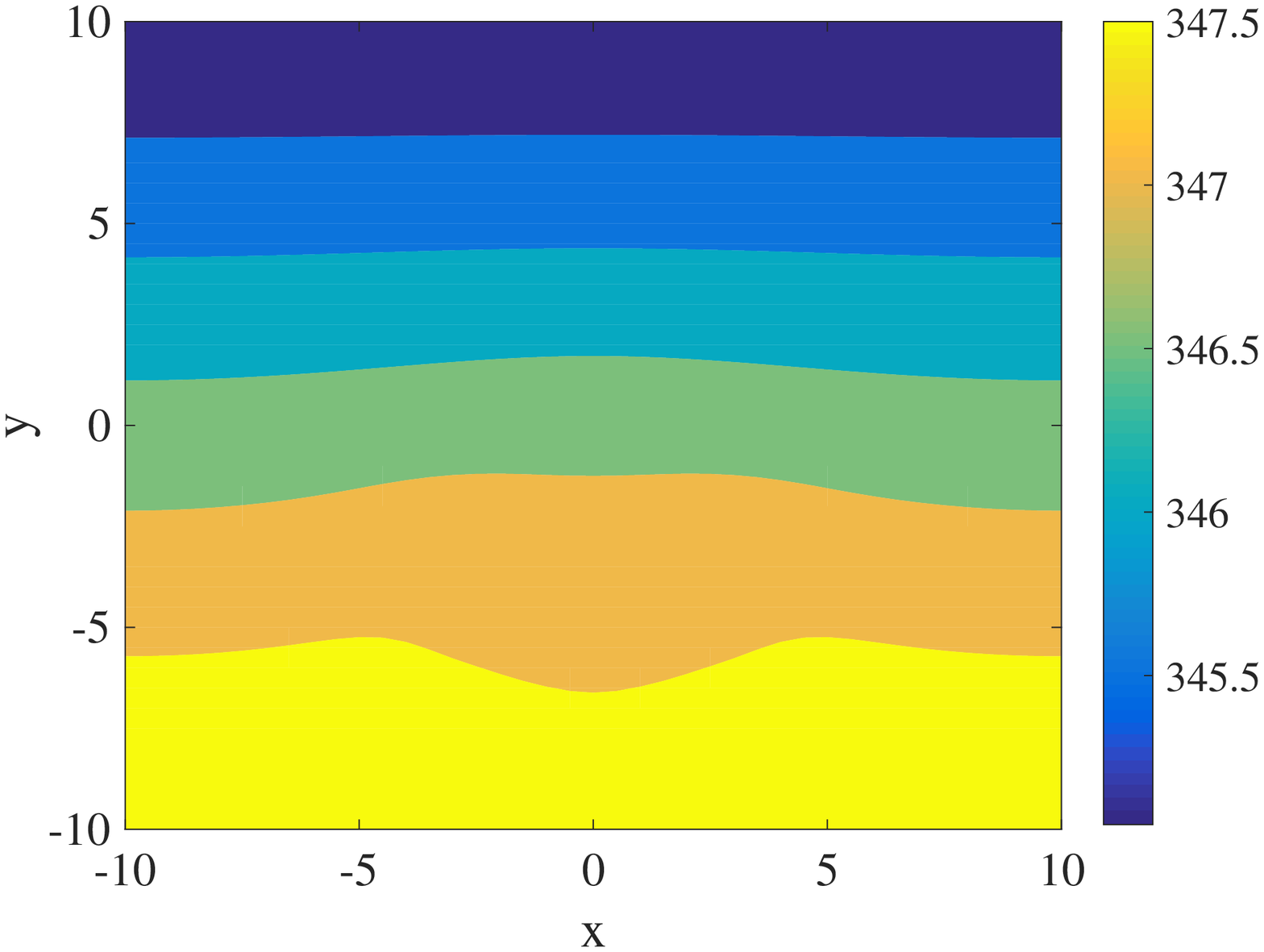}
            \end{minipage}
            }
             \centering \subfigure[]{
            \begin{minipage}[b]{0.3\textwidth}
            \centering
             \includegraphics[width=0.95\textwidth,height=1.35in]{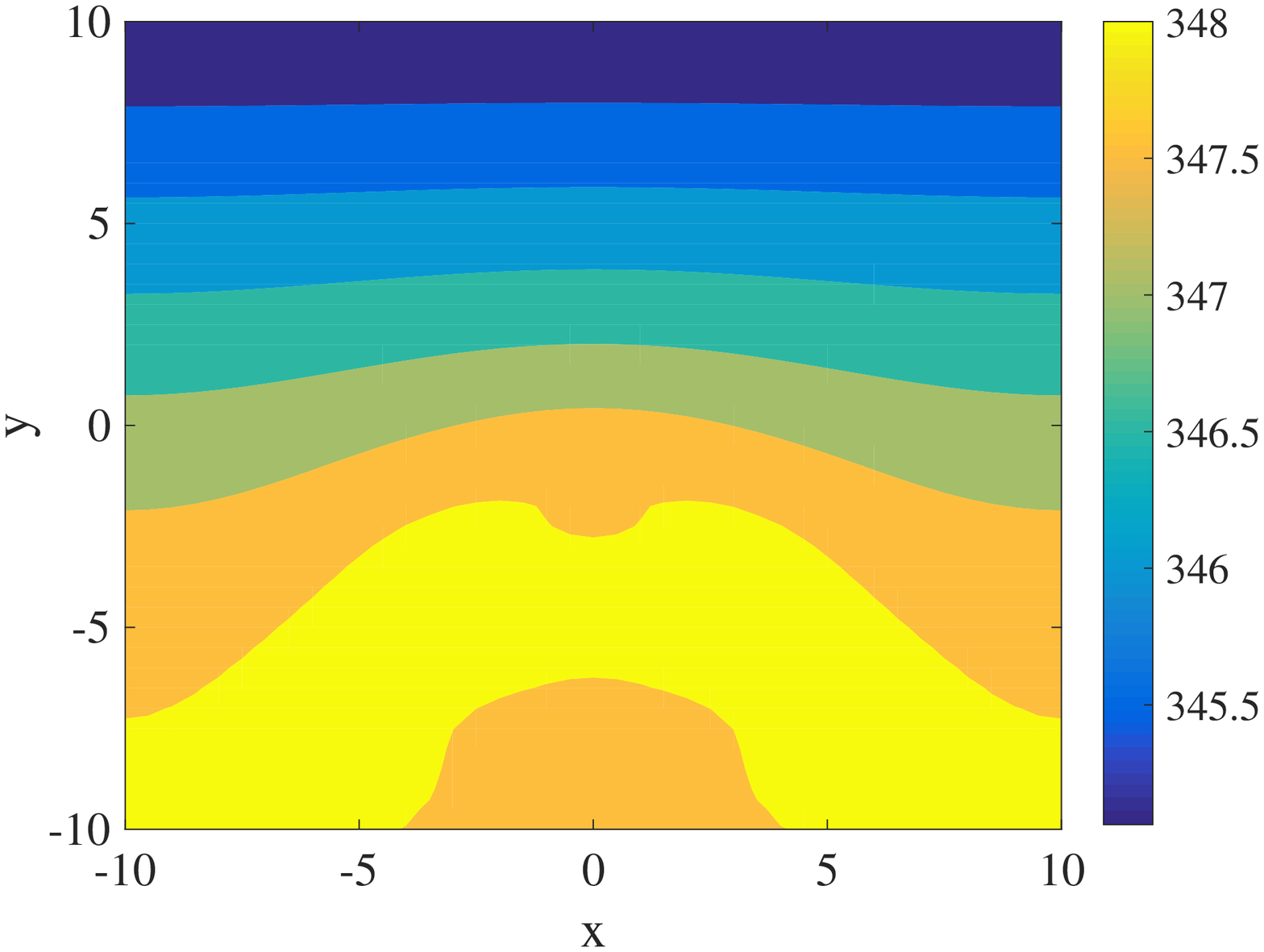}
            \end{minipage}
            }
           \centering \subfigure[]{
            \begin{minipage}[b]{0.3\textwidth}
               \centering
             \includegraphics[width=0.95\textwidth,height=1.35in]{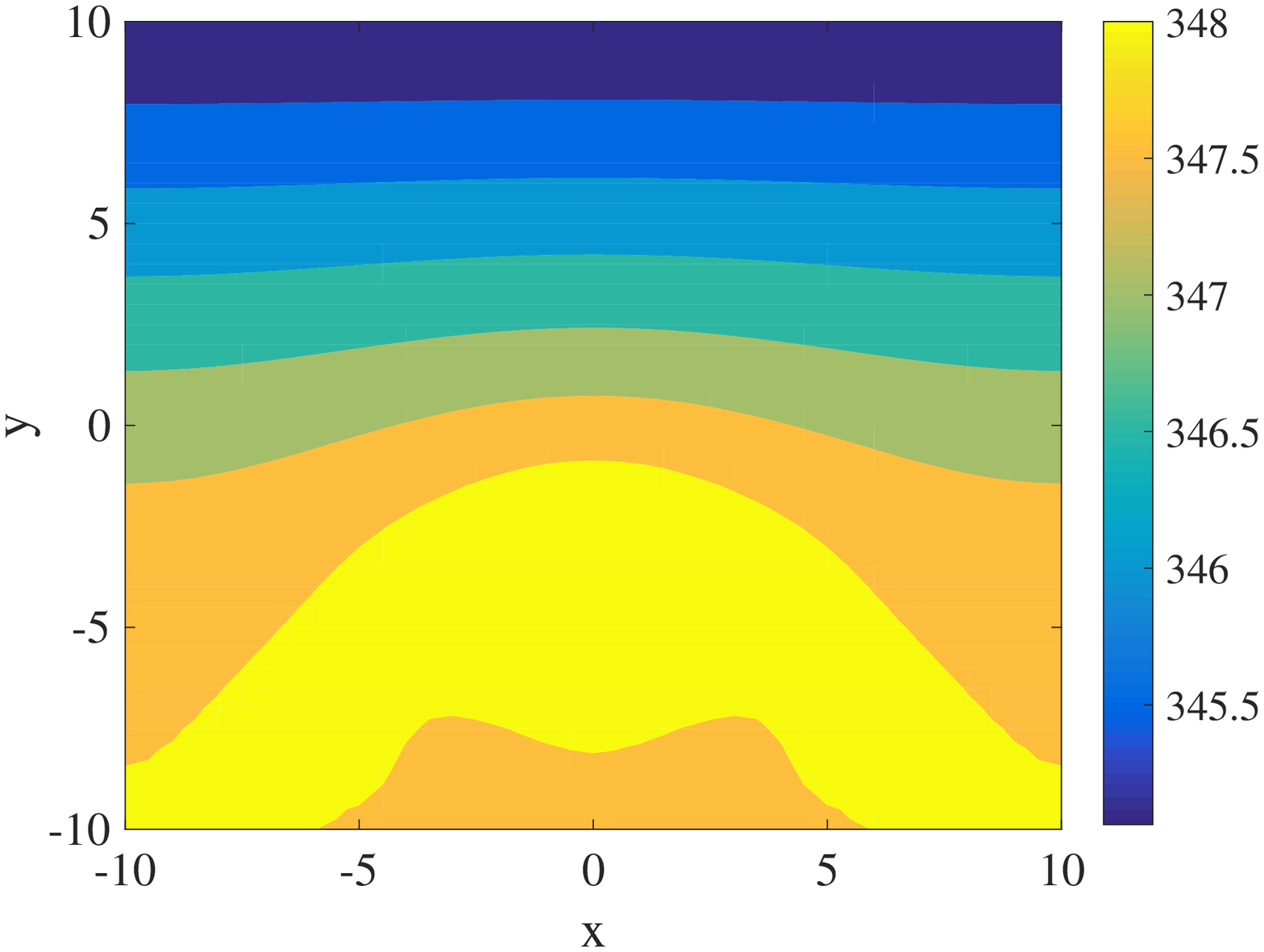}
            \end{minipage}
            }
            \centering \subfigure[]{
            \begin{minipage}[b]{0.3\textwidth}
            \centering
             \includegraphics[width=0.95\textwidth,height=1.35in]{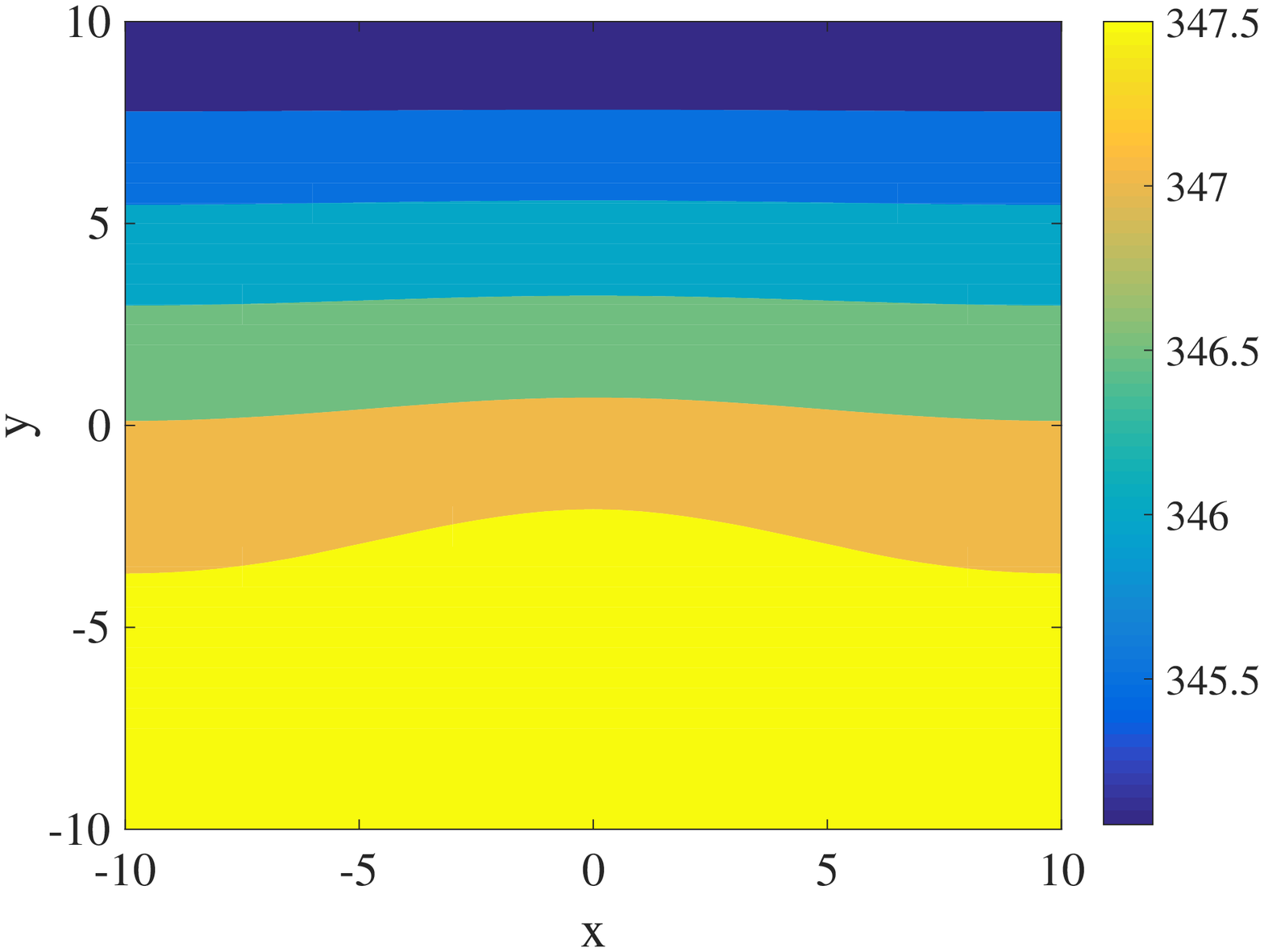}
            \end{minipage}
            }
             \centering \subfigure[]{
            \begin{minipage}[b]{0.3\textwidth}
            \centering
             \includegraphics[width=0.95\textwidth,height=1.35in]{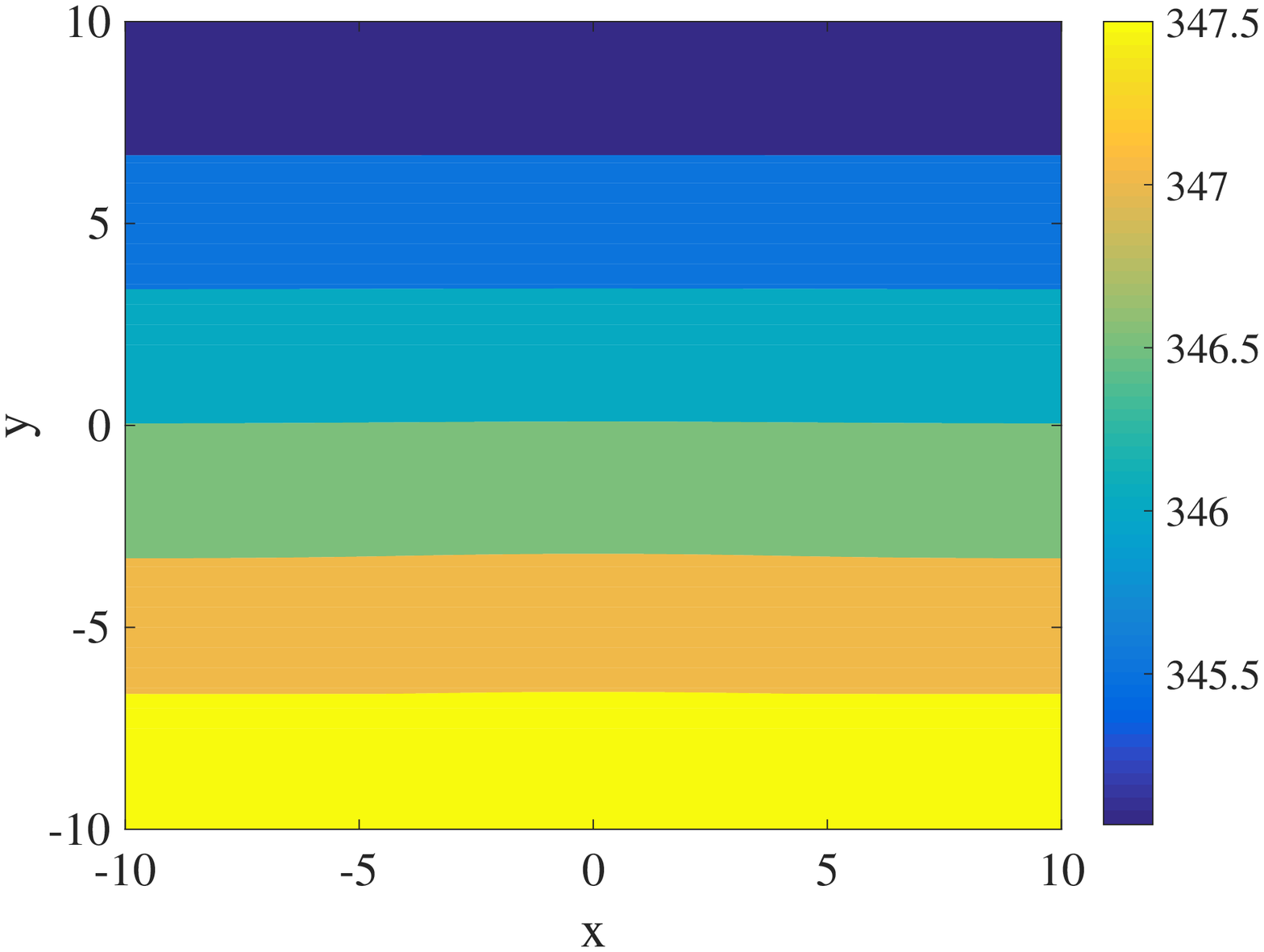}
            \end{minipage}
            }
           \caption{Bubble dropping problem: the temperature profiles at the initial time(a), 1000th(b), 20000th(c), 28000th(d), 29000th(e), 30000th(f), 30500th(g), 32000th(h) and 50000th(i) time step respectively.}
            \label{BubbleDroppingnC4TemperatureOfnC4}
 \end{figure}

\begin{figure}
            \centering \subfigure[]{
            \begin{minipage}[b]{0.31\textwidth}
               \centering
             \includegraphics[width=0.95\textwidth,height=1.35in]{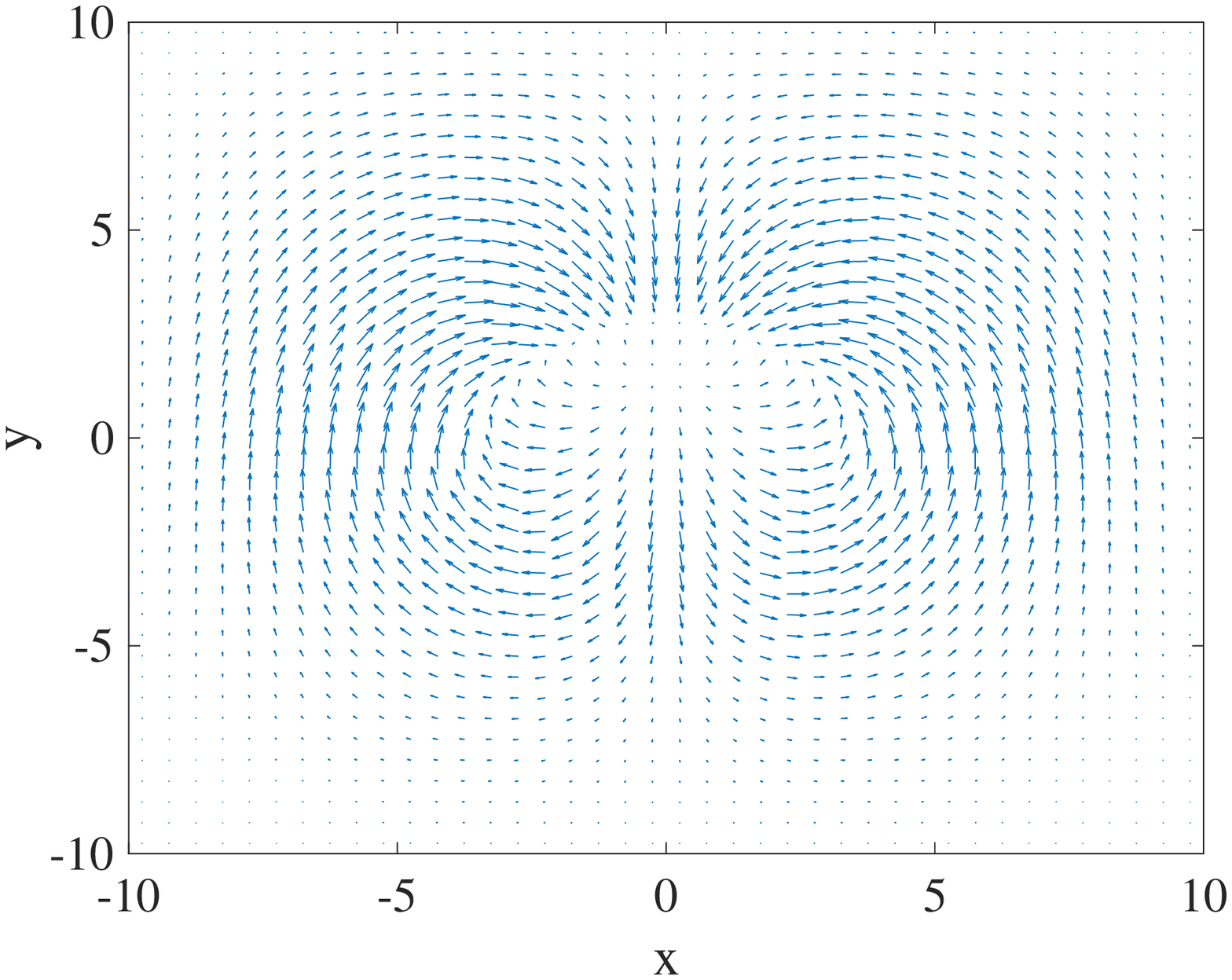}
            \end{minipage}
            }
            \centering \subfigure[]{
            \begin{minipage}[b]{0.31\textwidth}
            \centering
             \includegraphics[width=0.95\textwidth,height=1.35in]{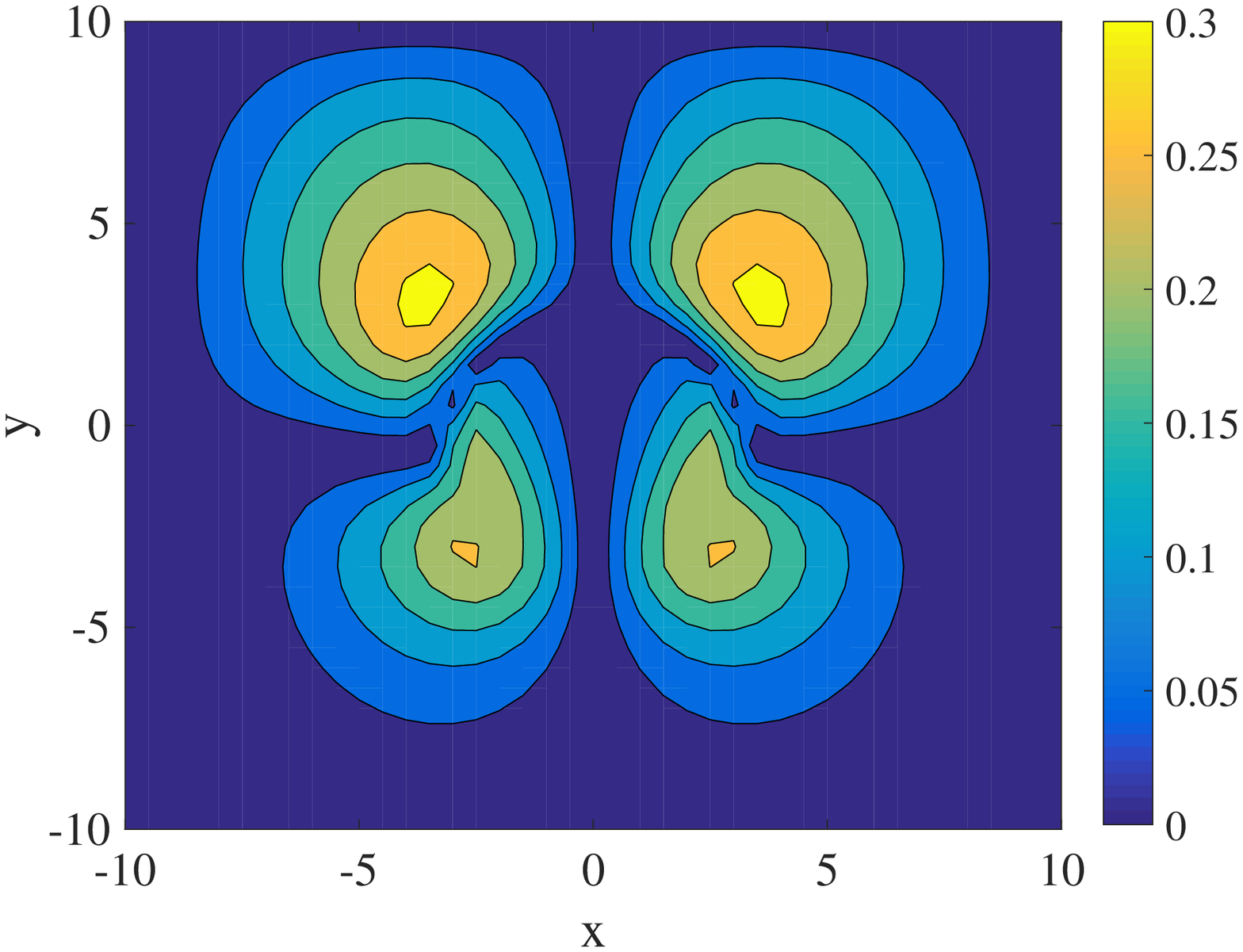}
            \end{minipage}
            }
             \centering \subfigure[]{
            \begin{minipage}[b]{0.31\textwidth}
            \centering
             \includegraphics[width=0.95\textwidth,height=1.35in]{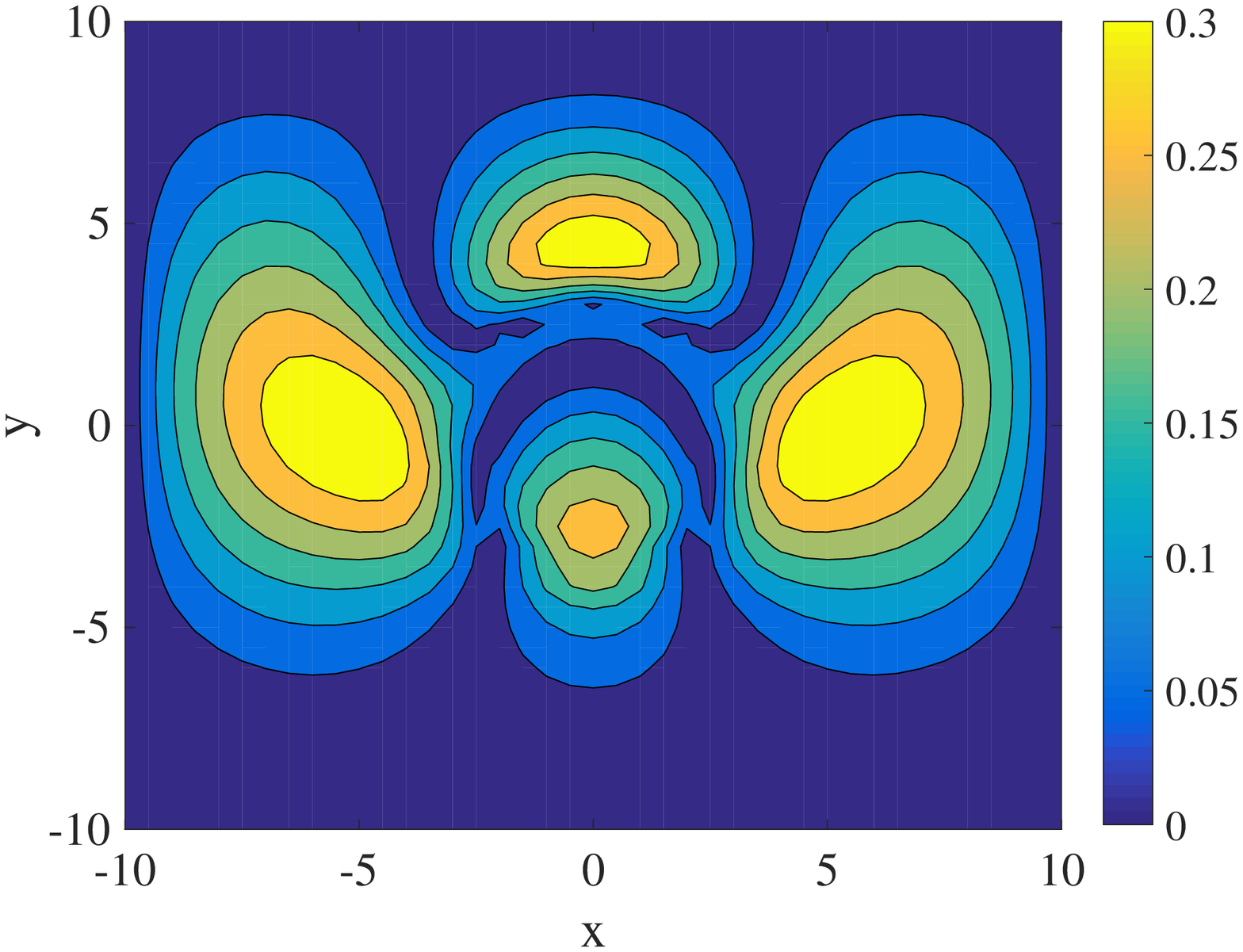}
            \end{minipage}
            }
           \centering \subfigure[]{
            \begin{minipage}[b]{0.31\textwidth}
               \centering
             \includegraphics[width=0.95\textwidth,height=1.35in]{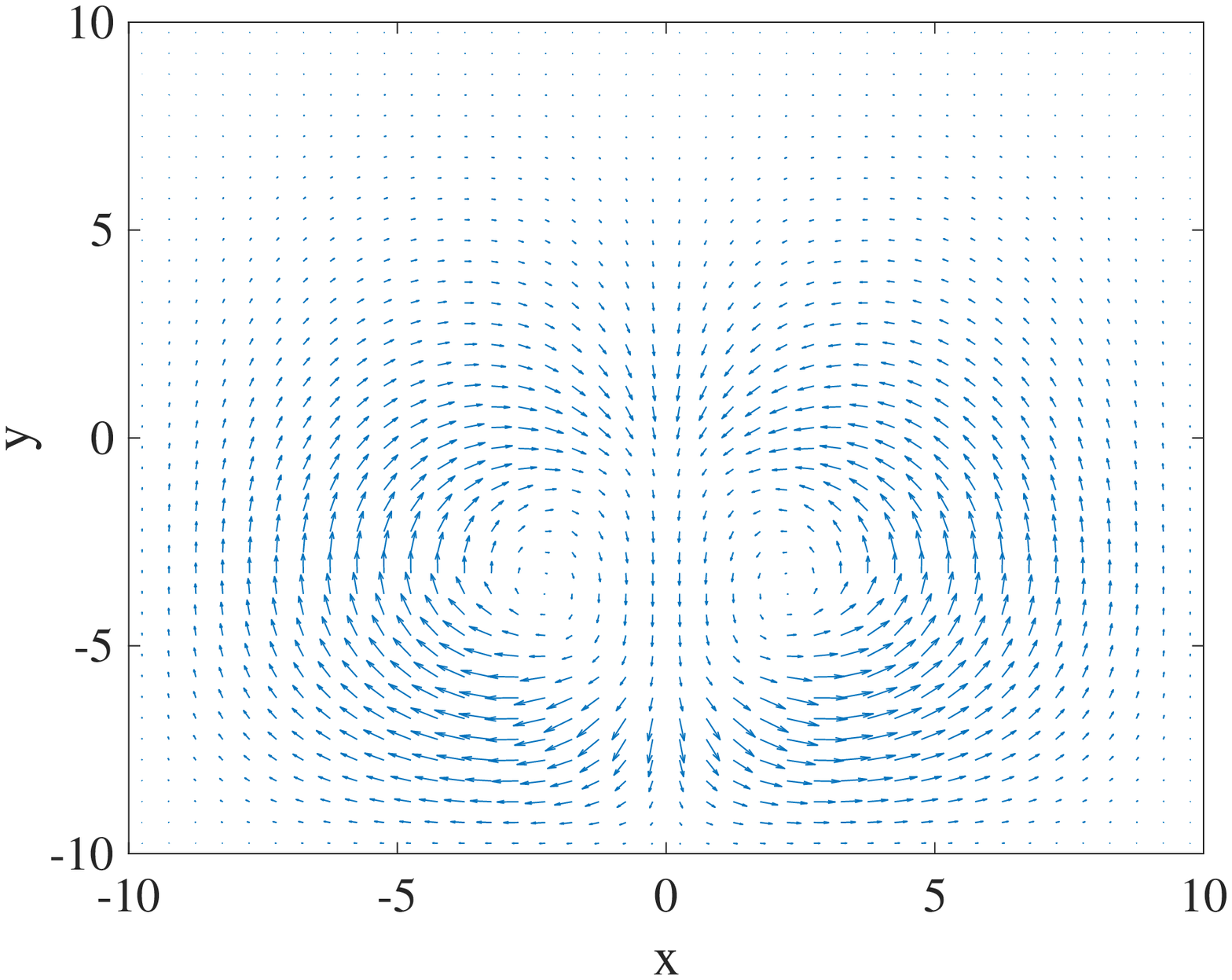}
            \end{minipage}
            }
            \centering \subfigure[]{
            \begin{minipage}[b]{0.3\textwidth}
            \centering
             \includegraphics[width=0.95\textwidth,height=1.35in]{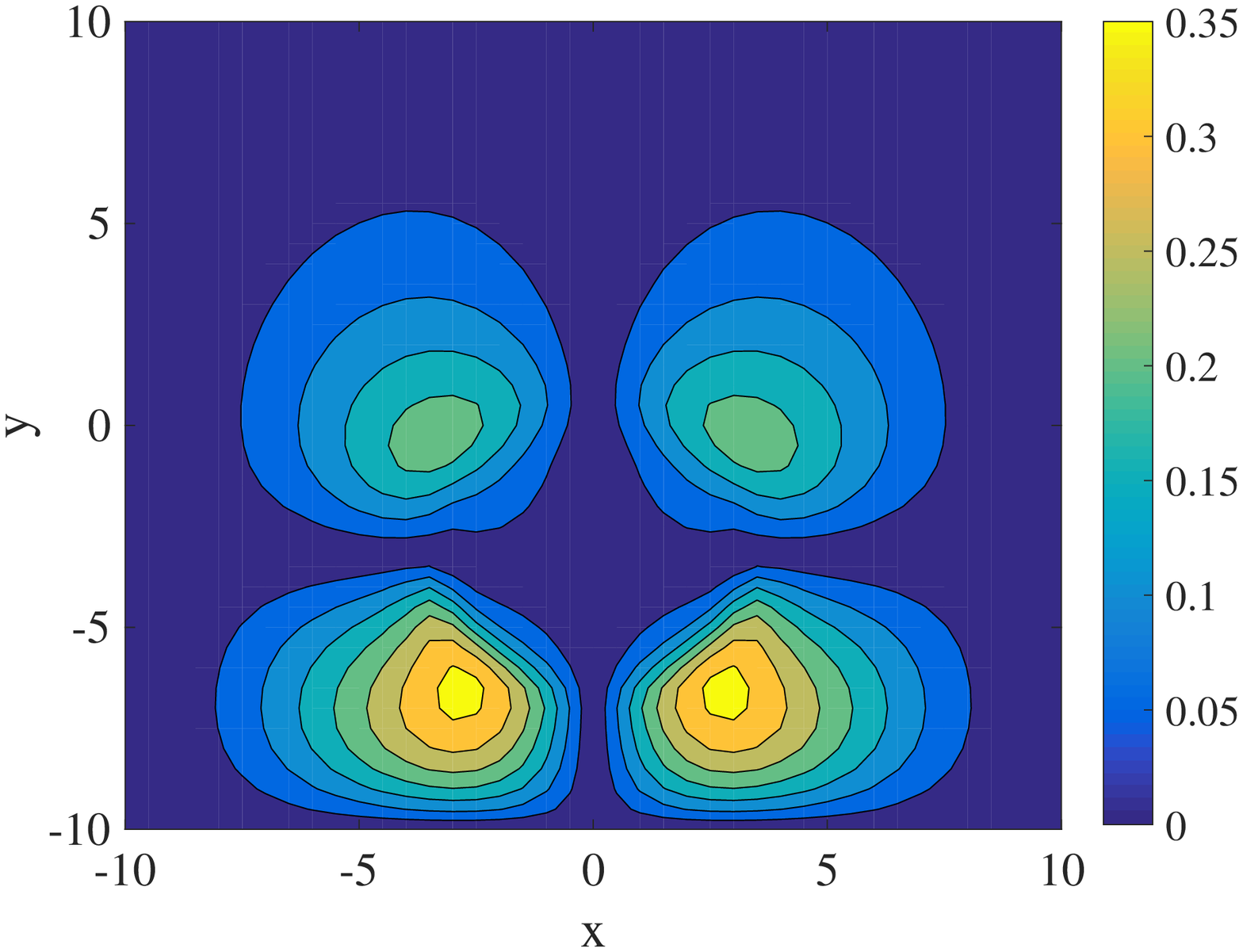}
            \end{minipage}
            }
             \centering \subfigure[]{
            \begin{minipage}[b]{0.3\textwidth}
            \centering
             \includegraphics[width=0.95\textwidth,height=1.35in]{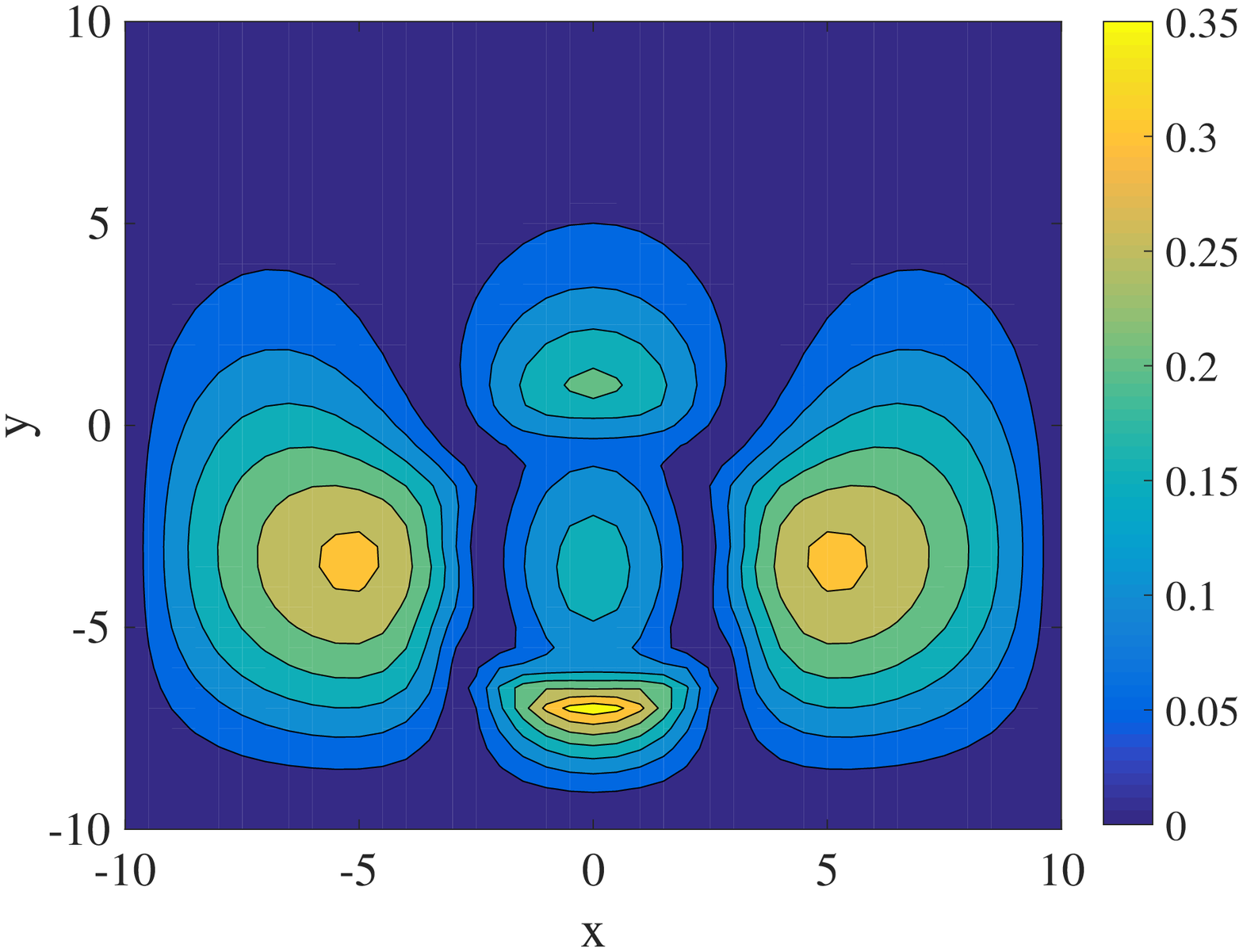}
            \end{minipage}
            }
           \centering \subfigure[]{
            \begin{minipage}[b]{0.3\textwidth}
               \centering
             \includegraphics[width=0.95\textwidth,height=1.35in]{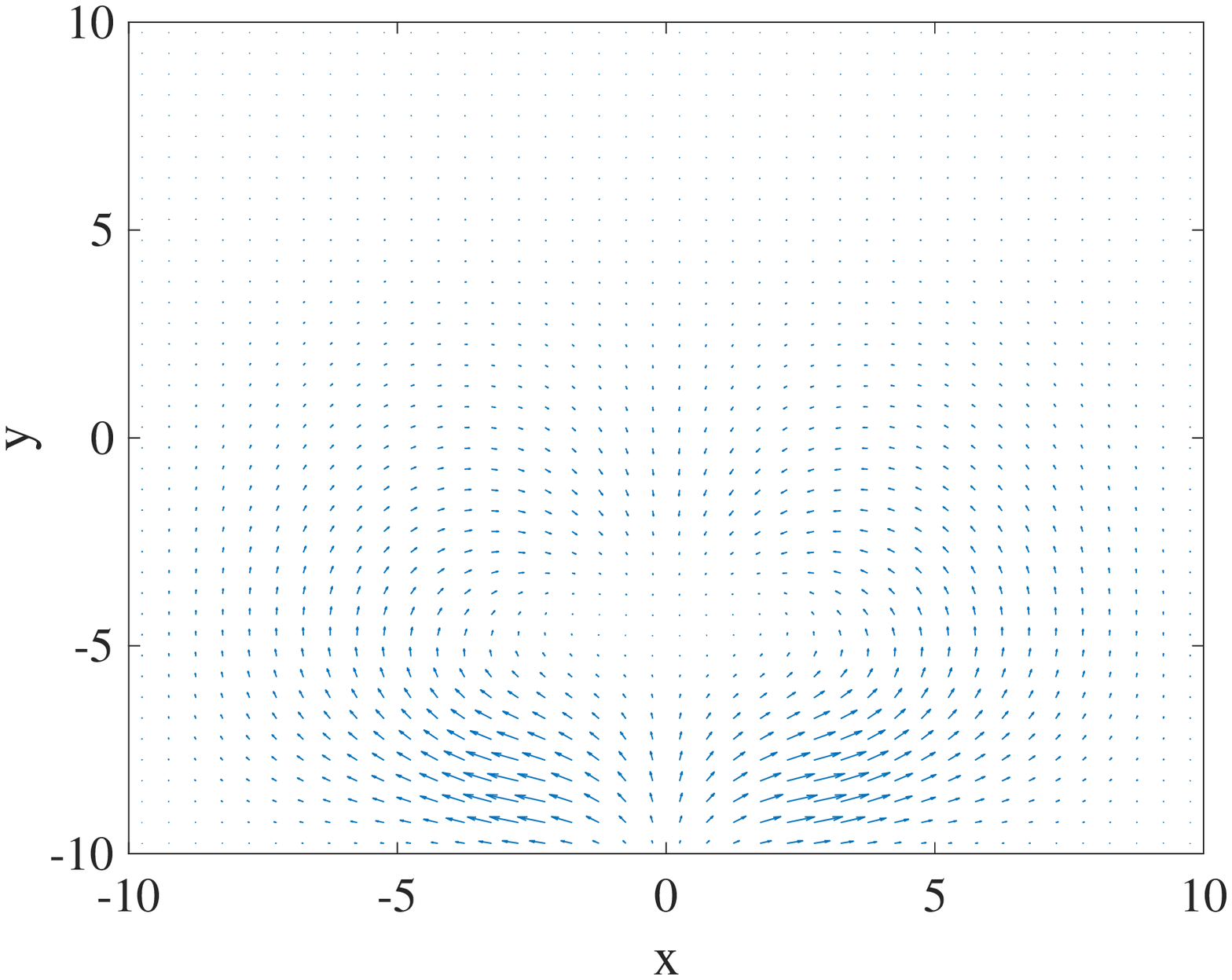}
            \end{minipage}
            }
            \centering \subfigure[]{
            \begin{minipage}[b]{0.3\textwidth}
            \centering
             \includegraphics[width=0.95\textwidth,height=1.35in]{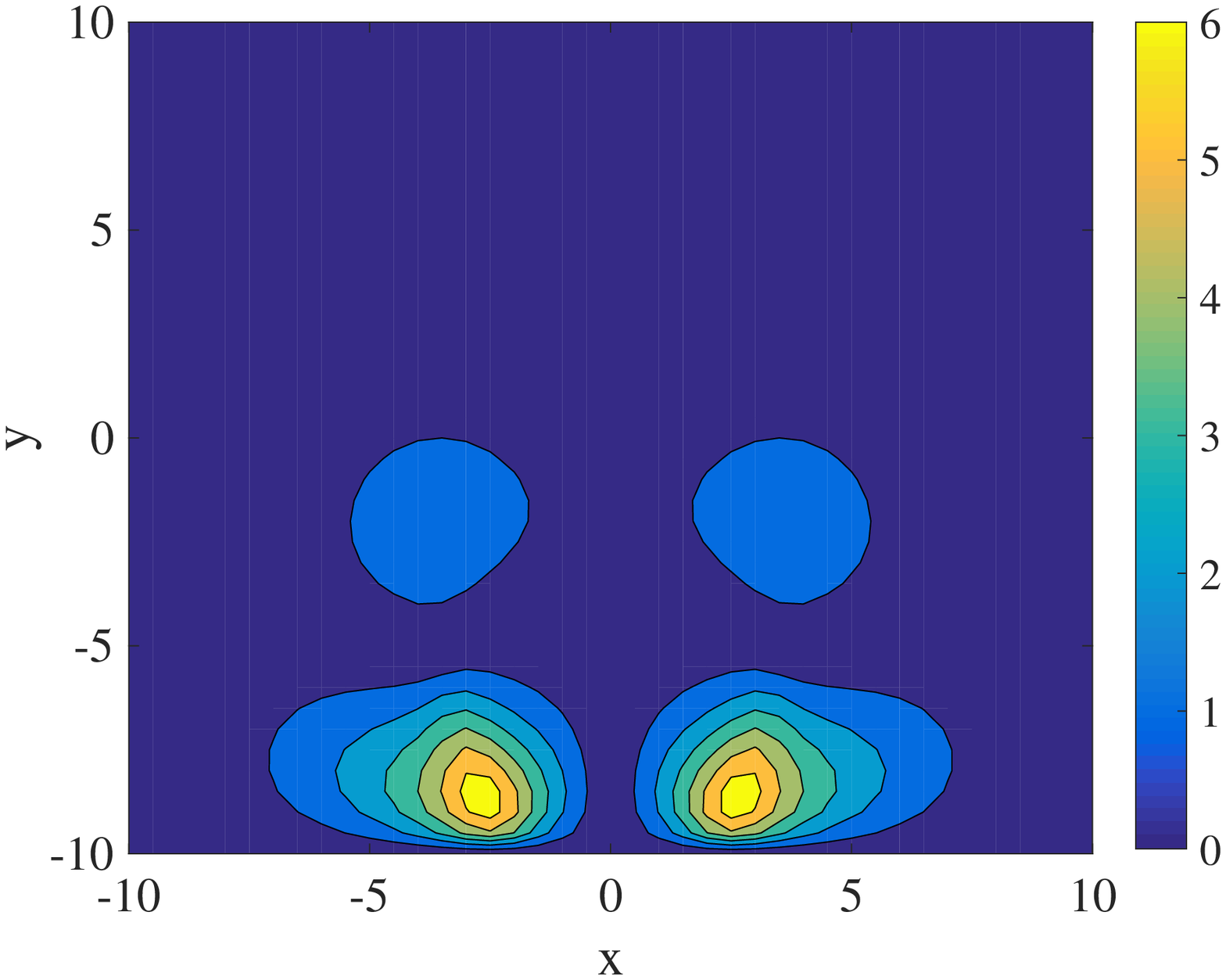}
            \end{minipage}
            }
             \centering \subfigure[]{
            \begin{minipage}[b]{0.3\textwidth}
            \centering
             \includegraphics[width=0.95\textwidth,height=1.35in]{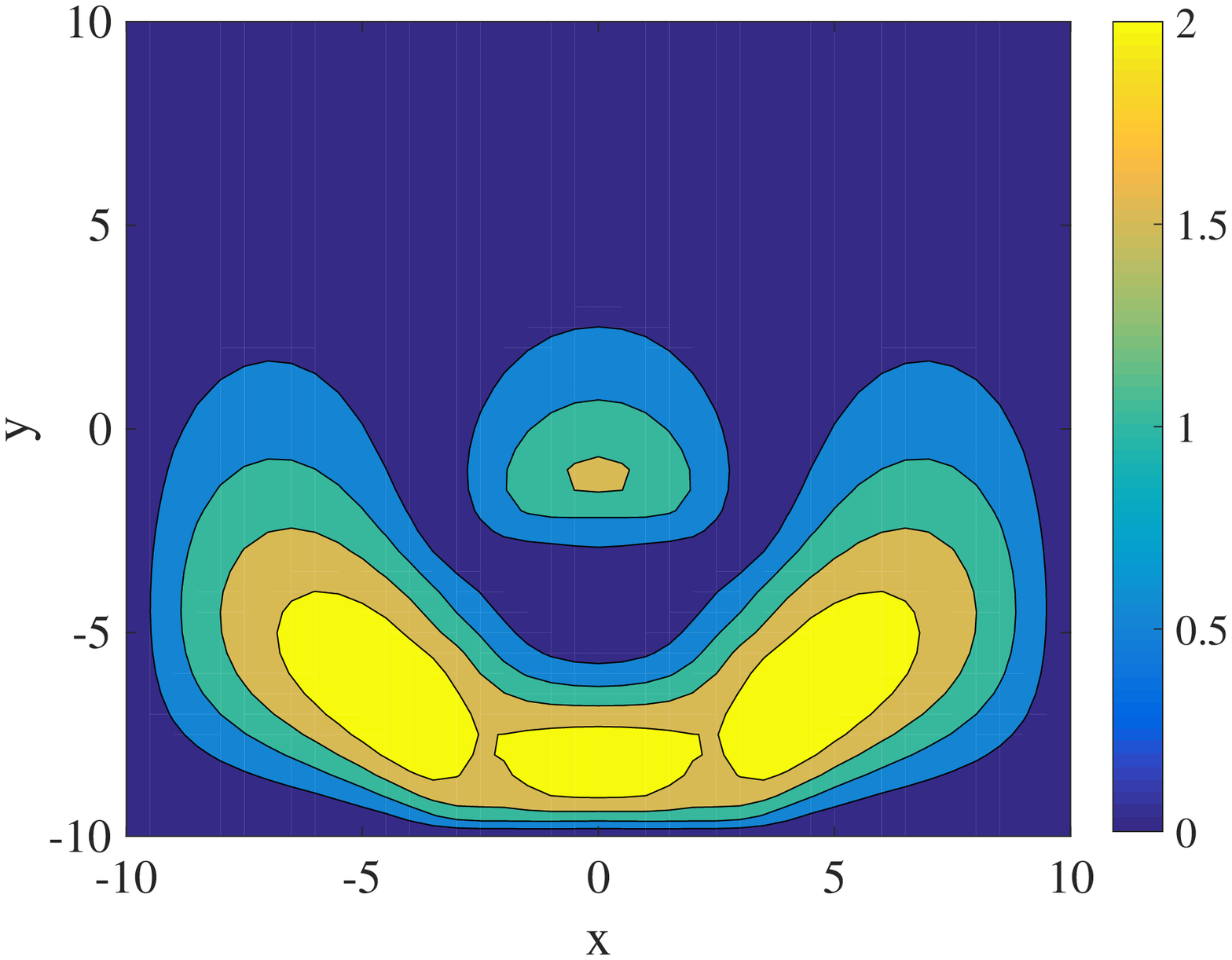}
            \end{minipage}
            }
           \centering \subfigure[]{
            \begin{minipage}[b]{0.3\textwidth}
               \centering
             \includegraphics[width=0.95\textwidth,height=1.35in]{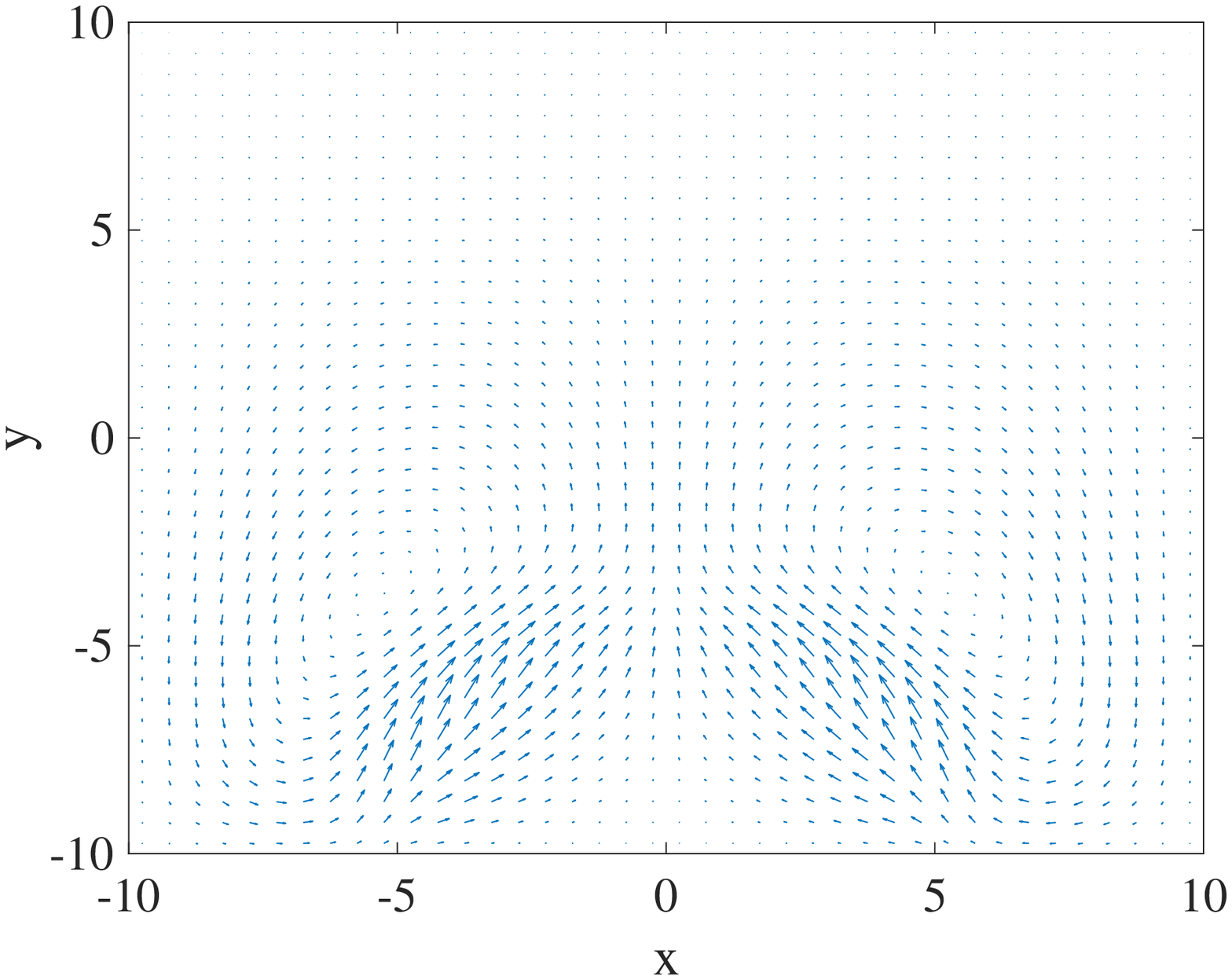}
            \end{minipage}
            }
            \centering \subfigure[]{
            \begin{minipage}[b]{0.3\textwidth}
            \centering
             \includegraphics[width=0.95\textwidth,height=1.35in]{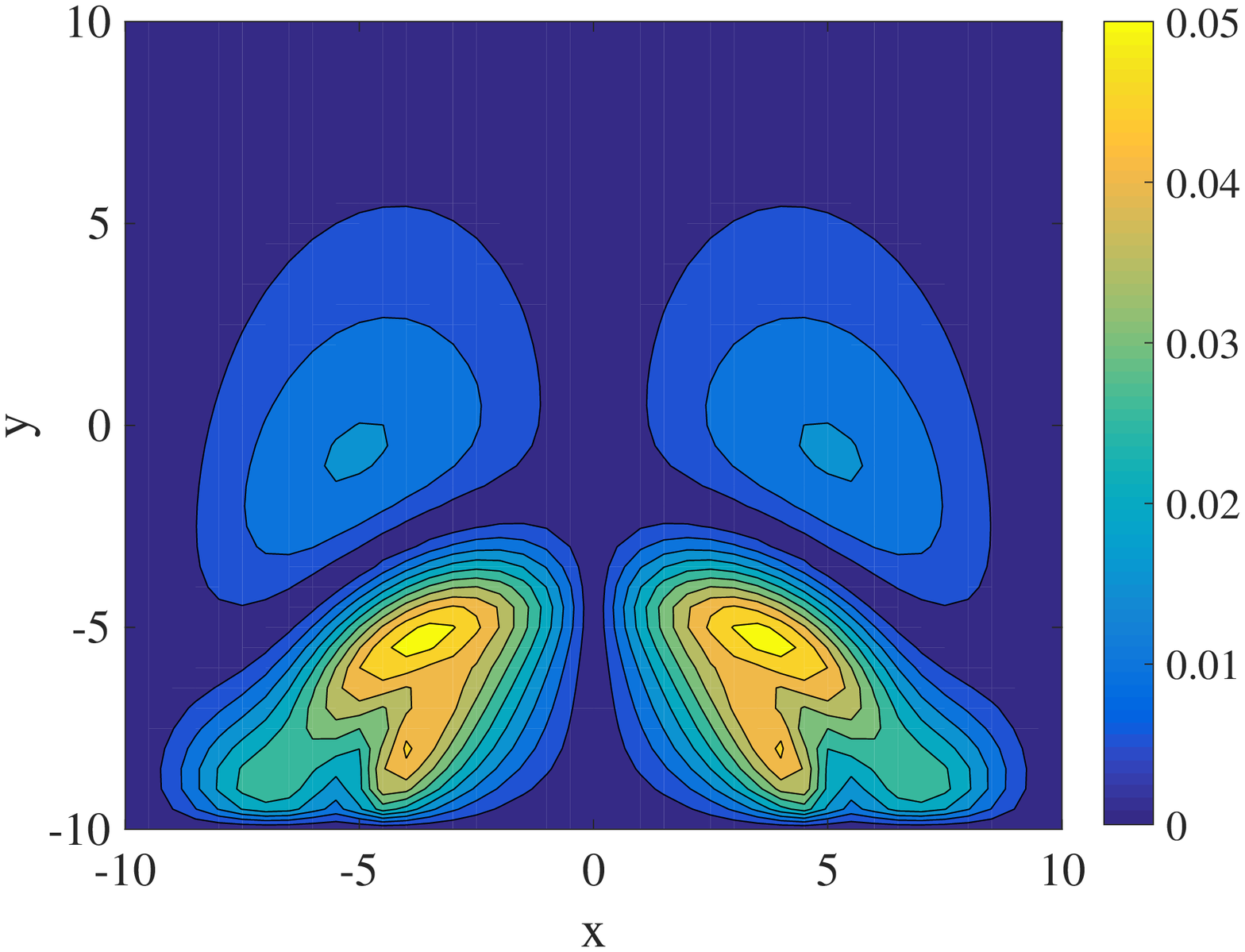}
            \end{minipage}
            }
             \centering \subfigure[]{
            \begin{minipage}[b]{0.3\textwidth}
            \centering
             \includegraphics[width=0.95\textwidth,height=1.35in]{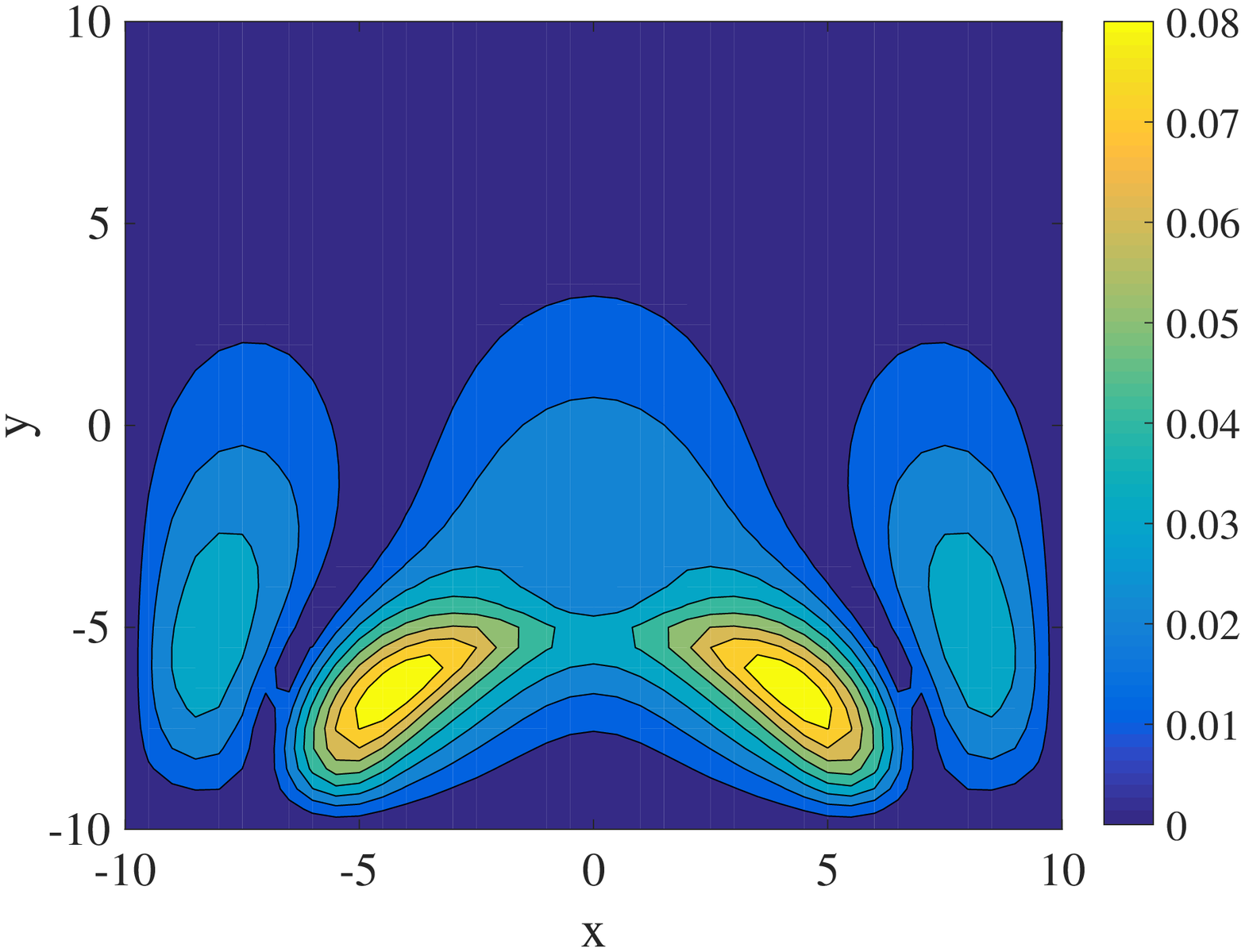}
            \end{minipage}
            }
           \caption{Bubble dropping problem:  the flow quivers (left column), magnitude contours of x-direction velocity component (center column), and magnitude contours of y-direction velocity component (right column)   at the 1000th(the first row), 20000th(the second row), 30000th(the third row), and 50000th(the bottom row) time step  respectively.}
           \label{BubbleDropingnC4Velocity}
 \end{figure}

\section{Conclusions}


We have studied modeling and numerical simulation of a diffuse-interface model of   gas-liquid two-phase   flow in an inhomogeneous temperature field. 
It is different from the existing models that we employ the Peng-Robinson equation of state instead of the van der Waals equation of state, and use  a realistic temperature-dependent influence parameter in the gradient contribution of Helmholtz free energy density.   As a result, this model is capable of  describing   physical behaviors of numerous realistic gas-liquid fluids accurately, such as  N$_2$, CO$_2$ and hydrocarbons. 

In order to resolve the difficulty resulting from  the complicate  form of thermodynamical pressure,  we prove a relation associating     the pressure gradient with the gradients of temperature and  chemical potential.  Using this  relation, we reformulate the model equations, which is beneficial to theoretical analysis and numerical simulation.  The new formulation of  momentum equation  shows  that  chemical potential and temperature gradients  become the primary driving force of the fluid motion.  By the new  formulations,  we prove that the model obey the first and second laws of thermodynamics.  

 To design efficient numerical time schemes,  we prove that the bulk contribution of Helmholtz free energy density   is a concave function with respect to the temperature and  its gradient contribution is also concave with respect to the temperature under certain conditions.  Based on the proposed modeling formulations,       we propose a novel thermodynamically consistent   numerical scheme by applying the convex-concave  splitting of Helmholtz free energy density. 
 The proposed scheme also utilizes  an auxiliary velocity, which depends on molar density and temperature, to alleviate the nonlinear coupling relation between molar density, velocity and temperature.  Furthermore,   a decoupled, linearized iterative method is developed for solving the discrete equations. 
It is also proved with a mathematical rigor that  the proposed time-marching scheme  satisfies the first and second laws of thermodynamics. 
Using the proposed numerical method, we have carried out a series of numerical tests and investigate the simulation results.

\small

\end{document}